\documentclass[12pt]{amsart}
\usepackage{amsmath,mathtools,amsthm,amsfonts,bigints,amssymb, mathrsfs, tikz-cd, xcolor, float}
\usepackage{tikz}
\usepackage{subcaption}
\usetikzlibrary{patterns}
\usepackage{pgf,tikz}

\usepackage{pgfplots}
\usepgfplotslibrary{fillbetween}

\usetikzlibrary{arrows}

\newtheorem{theorem}{Theorem}[section]
\newtheorem{lemma}[theorem]{Lemma}
\newtheorem{proposition}[theorem]{Proposition}
\newtheorem{corollary}[theorem]{Corollary}

\theoremstyle{definition}
\newtheorem{definition}[theorem]{Definition}

\newtheorem{remark}[theorem]{Remark}

\newcommand{\what}{\widehat}

\newcommand{\wtilde}{\widetilde}
\newcommand{\R}{\mathbb R}%
\newcommand{\M}{\mathcal M}%
\newcommand{\C}{\mathbb C}%
\newcommand{\D}{\mathfrak{D}}%
\newcommand{\Q}{\mbox{$\mathbb Q$}}%
\newcommand{\N}{\mathbb N}%
\newcommand{\Ss}{\mathcal S}%
\newcommand{\fd}{\mathfrak d}%
\newcommand{\X}{\mathbb X}%
\newcommand{\cH}{\mathcal H}%
\newcommand{\Hb}{\mathbb H}%

\newcommand{\I}{\mathbb I}%
\numberwithin{equation}{section}

\usepackage[colorlinks=true,linkcolor=purple
,citecolor=blue]{hyperref}
\makeatletter

\renewcommand\subsubsection{\@secnumfont}{\bfseries}%
\renewcommand\subsubsection{\@startsection{subsubsection}{3}
  \z@{.5\linespacing\@plus.7\linespacing}{-.5em}%
  {\normalfont\bfseries}}

  \makeatother
\makeatletter
\@namedef{subjclassname@2020}{%
  \textup{2020} Mathematics Subject Classification}
\makeatother

\begin{document}

\title[Transference principles for divergence sets]{Transference principles for divergence sets of dispersive equations and their applications}

\author[Utsav Dewan]{Utsav Dewan}
\address{Department of Mathematics, Indian Institute of Technology Bombay, Powai, Mumbai-400076, India}
\email{utsav@math.iitb.ac.in}

\subjclass[2020]{Primary 42B37, 43A85; Secondary 35Q41, 46E35.}

\keywords{Transference principles, Dispersive equations, Divergence sets, Hausdorff dimension, Real hyperbolic space}

\begin{abstract}
In this article, we study the fractal refinement of the Carleson's problem:
\begin{enumerate}
\item We introduce an equivalence relation on linear dispersive PDEs on $\mathbb{R}^{n}$ based on the high-frequency oscillation of their Fourier multipliers, proving that equations in the same class share the same Hausdorff dimension of divergence sets. As applications, we estimate these dimensions for abstract dispersive families. Consequently, we recover known $1$-dimensional results for linearized Boussinesq and Beam propagations, while establishing new results for the both of them in higher dimensions and for Klein-Gordon propagations in all dimensions. 
\item Next, we study the effect of radial symmetry by obtaining dimension estimates for fractional Schr\"odinger equations with concave phase and radial initial data. Applying a transference principle, dimension estimates are extended to families of abstract dispersive equations. As special cases, we recover the known result for the Beam propagation and also obtain new results for the linearized Boussinesq and the Klein-Gordon propagations in the radial setting.
\item We study analogues of (2) for the $3$-dimensional real hyperbolic space by obtaining dimension estimates for a wide class of dispersive propagations: the Schr\"odinger, fractional Schr\"odinger (with convex and concave phases), Beam, linearized Boussinesq, Wave and Klein-Gordon equations. This is the first result of its kind, outside the setting of flat Euclidean geometry. Our arguments are based on an inter-spatial transference principle of geometric-analytic flavour. In the process, we also obtain quantitative results on the singular sets of Sobolev functions, in the spirit of \v{Z}ubrini\'c, which may be of independent interest.  
\end{enumerate}
\end{abstract}

\maketitle
\tableofcontents

\section{Introduction}
\label{sec1}
One of the most celebrated problems in Harmonic analysis is the Carleson's problem: determining the optimal regularity of the initial condition $f$ of the
Schr\"odinger equation given by
\begin{equation} \label{schrodinger}
\begin{cases}
	 i\frac{\partial u}{\partial t} -\Delta u=0\:,\:\:\:  &(x,t) \in \R^n \times \R\:, \\
	u(\cdot, 0)=f\:, &\text{ on } \R^n \:,
	\end{cases}
\end{equation}
in terms of the index $s$ such that for all $f$ belonging to the inhomogeneous Sobolev space $H^s(\R^n)$, the solution $u$ of (\ref{schrodinger}) converges pointwise to $f$ a.e., that is, 
\begin{equation} \label{pointwise_convergence}
\displaystyle\lim_{t \to 0} u(x,t)= \displaystyle\lim_{t \to 0} e^{-it\Delta}f(x)=f(x)\:,\:\:\text{ almost everywhere }.
\end{equation}
In 1980, Carleson studied this problem for $n=1$ in \cite{C} and obtained the sufficient condition $s \ge 1/4$ for the pointwise convergence (\ref{pointwise_convergence}) to hold. In 1982, Dahlberg-Kenig \cite{DK} showed that $s \ge 1/4$ is also necessary. This completely solved the problem in dimension one and subsequently, posed the question in higher dimensions. After continuous improvements made by several mathematicians \cite{Cowling,Sjolin,Vega,Bourgain1, MVV, Tao, Lee}, the sufficiency of the threshold $s > n/2(n+1)$ for $n \ge 2$ was recently settled by Du et. al. \cite{DGL,DZ}. This bound is sharp except at the endpoint $s =n/2(n+1)$, due to a counterexample by Bourgain \cite{Bourgain}.

A natural refinement of the Carleson's problem was introduced by Sj\"ogren-Sj\"olin \cite{Sjogren} and Barcel\'o-Bennett-Carbery-Rogers \cite{BBCR}: measuring the Hausdorff dimension of the divergence sets in terms of the regularity of the initial data. The divergence set $\D(f)$ for each $f \in H^s(\R^n)$ is the set on which the pointwise convergence fails,
\begin{equation*}
\D(f):=\left\{x \in \R^n : \displaystyle\lim_{t \to 0} e^{-it\Delta}f(x) \ne f(x)\right\}\:,
\end{equation*}
and then one defines the corresponding dimension as,
\begin{equation*}
\fd_n(s):= \displaystyle\sup_{f \in H^s(\R^n)} dim_{\cH}\D(f)\:,
\end{equation*}
where $dim_{\cH}\D(f)$ is the Hausdorff dimension of $\D(f)$.

In \cite{BBCR}, Barcel\'o et. al. obtained the sharp bound for the Schr\"odinger propagation in dimension one:
\begin{equation} \label{schrodinger_dim1}
\fd_1(s) = \begin{cases}
1\:,\:\:& s<\frac{1}{4} \\
1-2s\:,\:\:& \frac{1}{4} \le s < \frac{1}{2} \\
0\:,\:\:& \frac{1}{2} \le s\:. 
\end{cases}
\end{equation}
We note that at the critical index $s=1/4$ for the Carleson's problem, $\fd_1$ jumps from $1$ down to $1/2$.

Alongside the Schrödinger equation, mathematical physics encompasses numerous other significant dispersive PDEs. One of the most prominent examples is given by the Boussinesq equation:
\begin{equation} \label{Boussinesq}
 \frac{\partial^2 u}{\partial t^2} -\frac{\partial^2 u}{\partial x^2} +\frac{\partial^4 u}{\partial x^4}= \frac{\partial^2}{\partial x^2}(u^2)\:,\:\:\:  (x,t) \in \R^2\:.
\end{equation}
The non-linear PDE given by (\ref{Boussinesq}) models  the propagation of long waves on the surface of water with small amplitude \cite{Boussi}. The corresponding linear part is also of profound importance as it serves as a foundational framework for modeling shallow-water wave kinematics, capturing essential frequency dispersion without nonlinear distortions. 

In this regard, recently Li-Li considered the corresponding problem of divergence sets for the linearized Boussinesq equation in \cite{LiLi} and obtained the same sharp size estimates (\ref{schrodinger_dim1}) by essentially replicating the similar oscillatory integral estimates carried out for the Schr\"odinger equation in \cite{BBCR}. The graph of $\fd_1$ for Schr\"odinger and the linearized Boussinesq is presented in red in Figure \ref{figure1}. 

\begin{figure}[h]
\begin{center}
\begin{tikzpicture}
	\draw [->, thick] (0,0) -- (3,0) node [below left] {$s$};
	\draw [->, thick] (0,0) -- (0,3)  node [below left] {$\fd_1(s)$};
	\draw[red] [-, thick] (0,2) -- (0.5,2);
	\draw[red][thick] plot[domain=0.5:1, smooth] (\x,{2-2*\x});
	\draw[red] [-, thick] (1,0) -- (3,0);
	\draw [fill] (0.5,0) circle [radius=.05] node [below] {\scriptsize$\frac{1}{4}$};
	\draw [fill] (0,2) circle [radius=.05] node [left] {\scriptsize$1$};
	\draw [fill] (0,1) circle [radius=.05] node [left] {\scriptsize$\frac{1}{2}$};
	\draw [fill] (1,0) circle [radius=.05] node [below] {\scriptsize$\frac{1}{2}$};
	\draw[gray] [-, dashed] (0.5,0) -- (0.5,2);
	\draw[gray] [-, dashed] (0,1) -- (0.5,1);
\end{tikzpicture}
\end{center}
\caption{$\fd_1$ for Schr\"odinger and linearized Boussinesq}
\label{figure1}
\end{figure}

Unlike the one-dimensional situation, in higher dimensions $n \ge 2$, however,  a complete description of $\fd_n(s)$ for the Schr\"odinger propagation is currently not known and one only has certain lower and upper bounds:
\begin{itemize}
\item for $s>n/2$, by the Sobolev embedding, one trivially has $\fd_n(s)=0$;
\item below the critical range $s < n/2(n+1)$, we have $\fd_n(s)=n$, by Bourgain's counter-example \cite{Bourgain}, based on number theorteic arguments;
\item in the range 
 $n/4 \:\le s \: \le n/2$, Barcel\'o
 et. al. obtained the upper bound $\fd_n(s) \le n-2s$ via the Kolmogorov-Seliverstov-Plessner method \cite{BBCR}, which is sharp due to  \v{Z}ubrini\'c's result on the singular sets of Sobolev functions \cite{Zubrinic};
\item in the remaining range there were earlier results on upper bounds by,
\begin{itemize}
\item Barcel\'o
 et. al. using bilinear Fourier extension estimates \cite{BBCR};
\item Luc\`a-Rogers estimating the decay of the spherical averages of the Fourier transform of fractal measures using multilinear estimates \cite{LR2}; 
\item Du-Guth-Li-Zhang by multilinear
refined Strichartz estimates using decoupling and multilinear Kakeya \cite{DGLZ};
\end{itemize}
\item the current best known upper bound:
\begin{equation*}
\fd_n(s) \le n+1 - \frac{2(n+1)s}{n}\:, \:\:\: \frac{n}{2(n+1)} < s < \frac{n}{4}\:,
\end{equation*}
is due to Du et. al. \cite{DGL,DZ} using polynomial partitioning, decoupling and induction on scales;
\item for the corresponding lower bounds, there were earlier results by,
\begin{itemize}
\item Luc\`a-Rogers using ergodic arguments \cite{LR3,LR1};
\item Luc\`a-Ponce-Vanegas adapting Bourgain's number theoretic counter-example to the fractal setting \cite{LP};
\end{itemize} 
\item the current best known lower bounds are due to Eceizabarrena-Ponce-Vanegas \cite{EP} by combining the fractal extension of Bourgain's counter-example as presented in \cite{LP} and the intermediate space trick of Du-Kim-Wang-Zhang \cite{DKWZ}. 
\end{itemize}

The delicate arguments employed above crucially use the fact that the phase of the Fourier multiplier of the Schr\"odinger operator is given by $\psi(\xi):=\|\xi\|^2$ and do not seem to have any straightforward extension to non-polynomial phases, such as, 
\begin{equation} \label{Boussinesq_phase}
\psi(\xi):=\|\xi\| \sqrt{1+\|\xi\|^2}\:,\:\:\:\: \xi \in \R^n\:,
\end{equation}
corresponding to the higher dimensional linearized Boussinesq equation:
\begin{equation*} 
 \frac{\partial^2 u}{\partial t^2} - \Delta u +\Delta^2 u= 0\:,\:\:\:  (x,t) \in \R^n \times \R\:,
\end{equation*}
and the problem of dimension estimates, remains wide open till date.

Now, although the phase given in (\ref{Boussinesq_phase}) is not quadratic, not even polynomial; it is, however, `asymptotically quadratic' in high frequency:
\begin{equation*}
\psi(\xi)=\|\xi\| \sqrt{1+\|\xi\|^2} = \|\xi\|^2 \:+\:\mathcal{O}(1)\:,\:\:\:\: \|\xi\| \gg 1\:,
\end{equation*}
as presented in Figure \ref{figure2}.

\begin{figure}[h]
\begin{center}
\begin{tikzpicture}
	\draw [->, thick] (0,0) -- (3,0) node [below left] {$\|\xi\|$};
	\draw [->, thick] (0,0) -- (0,9)  node [below left] {$\psi(\|\xi\|)$};
	\draw[red][thick] plot[domain=0:3, smooth] (\x,{(\x)^2});
	\draw[cyan][thick] plot[domain=0:2.93, smooth] (\x,{(\x) * sqrt(1 + (\x)^2)});
	\draw[red] [-, thick] (5,8) -- (5.3,8);
	\pgftext[at={\pgfpoint{6.5cm}{8cm}}]{\textcolor{red}{Schr\"odinger}};
	\draw[cyan] [-, thick] (5,7) -- (5.3,7);
	\pgftext[at={\pgfpoint{7.3cm}{7cm}}]{\textcolor{cyan}{linearized Boussinesq}};
	\draw [fill] (0,0) circle [radius=.05] node [below] {$0$};
\end{tikzpicture}
\end{center}
\caption{Phases of Schr\"odinger and linearized Boussinesq}
\label{figure2}
\end{figure}

This motivates us to ask:

{\em ($\mathcal{Q}$)\:\:What can be concluded about the size estimates of divergence sets of abstract dispersive equations whose phases are not explicit and only their high-frequency asymptotics are known?}

Apart from addressing the genuine non-polynomial scenarios in $\R^n$, the above approach is also very natural when working on negatively curved Riemannian manifolds, as we will later see in the proof of Theorem \ref{H3_theorem} on the 3-dimensional real hyperbolic space. Due to the geometry of the manifold, there is a natural spectral shift of the Laplace-Beltrami operator, causing even the Wave propagation to have a non-polynomial phase and to be subsumed as a non-degenerate Klein-Gordon propagation (see \cite{Anker}).

The above view-point leads us to consider linear dispersive equations:
\begin{equation} \label{dispersive}
\begin{cases}
	 i\frac{\partial u}{\partial t} +\psi(\sqrt{-\Delta} )u=0\:,\:  &(x,t) \in \R^n \times \R\:, \\
	u(\cdot,0)=f\:,\: &\text{ on } \R^n \:,
	\end{cases}
\end{equation}
where $\psi: (0,\infty) \to \R$ is measurable and define:
\begin{definition} \label{defn}
Two dispersive equations of the form (\ref{dispersive}) corresponding to $\psi_1,\psi_2$ are defined to be {\it of comparable oscillation} if there exist $\Lambda>0$ and $C>0$, such that 
\begin{equation*}
\left|\psi_1(\|\xi\|)-\psi_2(\|\xi\|)\right| \le C\:, \text{ for all } \|\xi\| > \Lambda\:.
\end{equation*} 
\end{definition}
Now setting the notation for the $\psi$-dispersive propagations,
\begin{equation*}
\fd_{\psi,n}(s):= \displaystyle\sup_{f \in H^s(\R^n)} dim_{\cH}\D_{\psi}(f)\:,
\end{equation*}
where 
\begin{equation*}
\D_{\psi}(f):=\left\{x \in \R^n : \displaystyle\lim_{t \to 0} e^{it\psi(\sqrt{-\Delta})}f(x) \ne f(x)\right\}\:,\:\: f \in H^s(\R^n)\:,
\end{equation*}
we present our first result which says that dispersive equations belonging to the same equivalence class under `comparable oscillation', have the same $\fd_{\psi,n}$:

\begin{theorem} \label{transference_theorem}
If two dispersive equations with phases $\psi_1,\psi_2$ are of comparable oscillation, then
\begin{equation*}
\fd_{\psi_1,n}(s)=\fd_{\psi_2,n}(s),\:\:\:\text{ for all } s>0\:.
\end{equation*}
\end{theorem}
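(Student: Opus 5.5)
By symmetry of the hypothesis in $\psi_1,\psi_2$, it suffices to show $\fd_{\psi_1,n}(s)\le\fd_{\psi_2,n}(s)$. Concretely, for every $f\in H^s(\R^n)$ I will show that $\D_{\psi_1}(f)$ and $\D_{\psi_2}(f)$ differ by at most a Lebesgue-null set of a special kind. Null is not enough for Hausdorff dimension, so the real goal is the identity $\D_{\psi_1}(f)=\D_{\psi_2}(f)$ for every $f\in H^s$, $s>0$.

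Fix $\Lambda$ and $C$ as in Definition \ref{defn}. I split $f=f_{\rm low}+f_{\rm high}$ with a sharp Fourier cutoff $\hat f_{\rm low}=\hat f\,\chi_{\{\|\xi\|\le\Lambda\}}$.

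\emph{Low frequencies.} Since $\hat f_{\rm low}\in L^2$ has compact support, it lies in $L^1$. So for either $\psi_j$,
\begin{equation*}
e^{it\psi_j(\sqrt{-\Delta})}f_{\rm low}(x)=\int_{\|\xi\|\le\Lambda}e^{ix\cdot\xi}e^{it\psi_j(\|\xi\|)}\hat f(\xi)\,d\xi
\end{equation*}
for every $x$, and this converges to $f_{\rm low}(x)$ for every $x$ by dominated convergence. Here only the measurability of $\psi_j$ and its finiteness are used. Hence the low part never contributes to divergence, and it is enough to compare the two propagations on $g=f_{\rm high}$.

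\emph{High frequencies.} Write $e^{it\psi_1(\|\xi\|)}=e^{it\psi_2(\|\xi\|)}\,e^{it\theta(\xi)}$ with $\theta=\psi_1-\psi_2$, which is bounded by $C$ on the support of $\hat g$. Then
\begin{equation*}
e^{it\psi_1}g-e^{it\psi_2}g=e^{it\psi_2(\sqrt{-\Delta})}\big[(e^{it\theta(\sqrt{-\Delta})}-1)g\big],
\end{equation*}
and the multiplier $m_t=(e^{it\theta}-1)\chi_{\{\|\xi\|>\Lambda\}}$ satisfies $|m_t|\le \min(2,C|t|)$. The aim is to show that this difference tends to $0$ at every point where the quantities make sense. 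A natural pointwise route expands
\begin{equation*}
e^{it\theta}-1=\sum_{k\ge1}(it)^k\theta^k/k!,
\end{equation*}
so that the difference equals $\sum_k \frac{(it)^k}{k!}\,e^{it\psi_2}(\theta^k g)$. Each $\theta^k g$ lies in $H^s$ with norm at most $C^k\|g\|_{H^s}$.

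\emph{Main obstacle.} The difficulty is controlling $\sup_{0<|t|<1}|e^{it\psi_2}(\theta^k g)(x)|$ pointwise, outside a set small in Hausdorff dimension, with at most geometric growth in $k$. Without a maximal estimate for $\psi_2$ this is not available. I would therefore instead prove the inclusion up to a set of the right dimension, using the set where $\sup_t|e^{it\psi_2}h|=\infty$ for $h\in H^s$.

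More cleanly, I would use a capacity or Frostman-measure formulation. For $\alpha>\fd_{\psi_2,n}(s)$, the standard argument (as in \cite{BBCR}) says that divergence sets of dimension $>\alpha$ are controlled by a maximal estimate
\begin{equation*}
\|\sup_{0<t<1}|e^{it\psi}h|\|_{L^1(d\mu)}\lesssim\|h\|_{H^s}
\end{equation*}
for $\alpha$-dimensional measures $\mu$. This equivalence between dimension bounds and weighted maximal estimates (via Nikishin--Stein-type or uniform boundedness arguments) is the step I expect to be hardest to justify in full generality. Once it is available for $\psi_2$, the series above gives
\begin{equation*}
\|\sup_t|e^{it\psi_1}g-e^{it\psi_2}g|\|_{L^1(\mu)}\lesssim\sum_k C^k\|g\|_{H^s}/k!<\infty,
\end{equation*}
with convergence to zero by density of Schwartz functions. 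This transfers the maximal estimate to $\psi_1$ and yields $\fd_{\psi_1,n}(s)\le\alpha$.
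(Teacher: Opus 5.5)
\textbf{There is a genuine gap, at the step that carries the theorem.} Your outline follows the paper's. A dimension bound for one phase becomes a fractal maximal estimate. That estimate is transferred through $e^{it\theta}-1=\sum_{k\ge1}(it)^k\theta^k/k!$ using $\|\theta^k g\|_{H^s}\le C^k\|g\|_{H^s}$. The method of maximal functions plus Frostman's lemma then returns a dimension bound. Your low/high frequency split is fine, even with a sharp cutoff, since $\what{f}_{\rm low}\in L^1$. So is the series step.

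The gap is the step you leave open. You assume that $\alpha>\fd_{\psi_2,n}(s)$ yields an \emph{unweighted, strong-type} bound $\|\sup_t|e^{it\psi_2(\sqrt{-\Delta})}h|\|_{L^1(d\mu)}\lesssim\|h\|_{H^s}$ for every $\alpha$-dimensional $\mu$. Nothing supports this. The argument of \cite{BBCR} only runs the other way, from maximal estimate to dimension bound. Nikishin-type arguments only give weak-type bounds with respect to a modified measure $w\,d\mu$, never strong-type unweighted ones. What the hypothesis actually gives, via Frostman's lemma, is much less: for each fixed $\mu\in\M^\alpha(\mathbb{B}(0,1))$, $\mu$-a.e.\ convergence holds for every $f\in H^s(\R^n)$.

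\textbf{What is missing.} You need to supply the input to Nikishin's theorem and then check that its weaker output is enough.
\begin{itemize}
\item \emph{The input.} Nikishin needs $\mu$-a.e.\ finiteness of a maximal function for \emph{every} $f\in H^s$. Convergence at $t=0$ alone only bounds $|e^{it\psi_2(\sqrt{-\Delta})}f(x)|$ for $|t|<\delta(x,f)$. The paper gets around this in two moves. It uses the group property to get $\mu$-a.e.\ convergence as $t\to t_0$ for each fixed $t_0$. It then adds the fixed-time fractal Sobolev embedding (Corollary~\ref{fractal-Sobolev-embedding}), which applies since $\alpha>n-2s$.
\item \emph{The output.} Nikishin then produces a weight $0<w\le 1$ ($\mu$-a.e.) with $\|S^*_{\psi_2}f\|_{L^{2,\infty}(w\,d\mu)}\lesssim\|f\|_{H^s}$.
\item \emph{Why it suffices.} $L^{2,\infty}(w\,d\mu)$ carries an equivalent norm, so your series can still be summed term by term. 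Since $w>0$ $\mu$-a.e., $w\,d\mu$-null sets are $\mu$-null. That is all the final Frostman step needs.
\end{itemize}

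\textbf{A caution if you follow the paper here.} The paper passes from convergence at each rational $t_0$ to finiteness of $\sup_{t\in\I}$ by compactness. That step is delicate, because the neighbourhoods of rational times need not cover the irrational ones. A safer route is to work with local maximal operators $\sup_{0<|t|<\delta}$, which is all the transfer uses. Combine these with a Baire-category argument and Nikishin's construction on subsets of $\mathbb{B}(0,1)$ of almost full $\mu$-measure.
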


\begin{remark}
As applications of Theorem \ref{transference_theorem},
\begin{enumerate}
\item  we obtain Hausdorff dimension estimates of divergence sets for families of abstract dispersive equations, classified solely in terms of the oscillation of their Fourier multipliers in high frequency (Corollaries \ref{corollary1}, \ref{corollary2}, \ref{corollary3} and \ref{corollary4});
\item as special cases,
\begin{enumerate}
\item we recover the known one-dimensional results for the linearized Boussinesq and the Beam propagations (Remark \ref{recovering_results_beam});
\item we also obtain genuinely new results for the linearized Boussinesq and the Beam propagations in higher dimensions (Remark \ref{new_results_beam}) and the Klein-Gordon propagations in all dimensions (Remark \ref{new_results_klein}).
\end{enumerate}
\end{enumerate}
For more details, we refer to Section \ref{sec5}.
\end{remark}

Now unlike in the case of the Schr\"odinger equation, not much is known about the fractional Schr\"odinger equations in higher dimensions. However, it is a common phenomenon in harmonic analysis that geometric symmetry enables easier access to analysis of PDEs. This viewpoint was explored by Bennett-Rogers in \cite{BR} where they considered radial initial data on $\R^n$ for the fractional Schr\"odinger equations (\ref{frac_schrodinger}) with convex phase i.e. $a>1$ and obtained the sharp size estimates:
\begin{equation} \label{BR_estimates}
\wtilde{\fd}_{\psi,n}(s)= \begin{cases}
n,\:\: &s<\frac{1}{4}\\
n-2s,\:\: &\frac{1}{4} \le s<\frac{1}{2}\\
0,\:\: &\frac{1}{2} < s\:,
\end{cases}
\end{equation} 
where 
\begin{equation} \label{radial_dimension_defn}
\wtilde{\fd}_{\psi,n}(s):= \displaystyle\sup_{\substack{f \in H^s(\R^n) \\ f \text{   radial }}} dim_{\cH}\D_{\psi}(f)\:.
\end{equation}
\begin{remark}
We note that, in (\ref{radial_dimension_defn}) the supremum is taken only over the class of radial elements in $H^s(\R^n)$ and thus trivially
\begin{equation*}
\wtilde{\fd}_{\psi,n} \le \fd_{\psi,n}\:.
\end{equation*}
But nothing more can be concluded from $\fd_{\psi,n}$ and hence, estimating $\wtilde{\fd}_{\psi,n}$ is a variation of the original problem, not a mere special case.
\end{remark}
Our next result complements the result of Bennett-Rogers for the other fractional powers:
\begin{theorem} \label{radial_concave_thm}
For the fractional Schr\"odinger equations (\ref{frac_schrodinger}) with radial initial data, we have
\begin{enumerate}
\item for concave phase i.e. $a \in (0,1)$,
\begin{enumerate}
\item the sharp result in the range,
\begin{equation*}
\wtilde{\fd}_{\psi,n}(s) =\begin{cases}
n\:,\:\: &s<\frac{a}{4}\\
n-2s\:,\:\: &\frac{1}{4} \le s<\frac{1}{2}\\
0\:,\:\: &\frac{1}{2} < s\:;
\end{cases}
\end{equation*}
\item and in the remaining range, \\
the upper bound,
\begin{equation*}
\wtilde{\fd}_{\psi,n}(s) \le n-\frac{1}{2}+\frac{1-4s}{2(1-a)}\:,\:\: \frac{a}{4} \le s \le \frac{1}{4}\:,
\end{equation*}
and the lower bound,
\begin{equation*}
\wtilde{\fd}_{\psi,n}(s) \ge n-2s\:,\:\: \frac{a}{4} \le s \le \frac{1}{4}\:;
\end{equation*}
\end{enumerate}
\item for the Wave propagation i.e. $a=1$,
\begin{equation*}
\wtilde{\fd}_{\psi,n}(s) =\begin{cases}
n\:,\:\: &s\le \frac{1}{2}\\
0\:,\:\: &\frac{1}{2} < s\:.
\end{cases}
\end{equation*}
\end{enumerate}
\end{theorem}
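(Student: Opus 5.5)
The proof splits into upper bounds, which come from maximal estimates, and lower bounds, which come from explicit radial counterexamples. The phase is $\psi(r)=r^a$.

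\textbf{Upper bounds.} Write the radial solution as a Hankel-type integral,
\begin{equation*}
e^{it(-\Delta)^{a/2}}f(x)=\int_0^\infty e^{itr^a}\,\hat f(r)\,(r|x|)^{-\frac{n-2}{2}}J_{\frac{n-2}{2}}(r|x|)\,r^{n-1}\,dr .
\end{equation*}
We split the Bessel function into a local part ($r|x|\lesssim1$) and the asymptotic parts $e^{\pm ir|x|}(r|x|)^{-1/2}$. On each dyadic annulus in $|x|$ away from the origin, this reduces the problem to one-dimensional oscillatory integrals with phase $x\rho\pm t\rho^a$, weighted by $|x|^{-(n-1)/2}$.

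The divergence set of a radial function is a union of spheres, so $\dim_{\cH}\D_\psi(f)=n-1+\dim_{\cH}E$, where $E\subset(0,\infty)$ is the set of divergent radii (the origin aside). It therefore suffices to show $\dim_{\cH}E\le\alpha$, where $\alpha=1-2s$ in the sharp range and $\alpha=\frac12+\frac{1-4s}{2(1-a)}$ in the remaining range.

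We use the standard Frostman route. We prove a weighted maximal estimate
\begin{equation*}
\Big\|\sup_{0<t<1}|e^{it(-\Delta)^{a/2}}f|\Big\|_{L^1(d\mu)}\lesssim \sqrt{c_\alpha(\mu)}\,\|f\|_{H^s},
\end{equation*}
for $\alpha$-dimensional measures $\mu$ on an annulus. By the Kolmogorov–Seliverstov–Plessner linearisation $t=t(x)$ and a $TT^*$ argument, this reduces to bounding the kernel
\begin{equation*}
K(x,y,t,t')=\int e^{i((x-y)\rho+(t-t')\rho^a)}\rho^{-2s}\chi(\rho/R)\,d\rho .
\end{equation*}
For concave $a<1$, the second-derivative (van der Corput) bound gives $|K|\lesssim R^{1-2s}\min\{1,(R^{a}|t-t'|)^{-1/2}R^{...}\}$. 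Non-stationary phase gives $|K|\lesssim R^{-2s}|x-y|^{-1}$ when $|x-y|\gg R^{a-1}|t-t'|$. Interpolating these, and summing over dyadic $R$ against $\mu(B(x,r))\le r^\alpha$, yields convergence exactly when $\alpha>1-2s$ for $s\ge1/4$ (this matches Bennett–Rogers). In the range $a/4\le s\le1/4$ the stationary set has width $R^{a-1}$ rather than $R^{-1/2}$, which produces the stated weaker exponent.

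In the region $s>1/2$, or near the origin, the radial Sobolev embedding $|f(x)|\lesssim|x|^{-(n-1)/2}\|f\|_{H^s}$ together with the smoothing gives $\dim=0$. For $s<a/4$ or $s<1/4$, the bound $n$ is trivial.

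\textbf{Lower bounds.}
\begin{itemize}
\item For $s<a/4$ we use a Dahlberg–Kenig-type radial counterexample: $\hat f$ is concentrated on a frequency band $[R,2R]$ with a modulation making $x\rho-t\rho^a$ stationary on a set of radii of positive measure. This gives $\dim=n$ and places the threshold at $a/4$.
\item For $s\ge1/4$ and $a\le1$ the bound $n-2s$ follows from Žubrinić-type radial Sobolev functions singular on a union of spheres indexed by a Cantor set of radii of dimension $1-2s$. Since the evolution is unitary and smooths nothing, the singularity persists and convergence fails there.
\item For the wave case $a=1$, we use d'Alembert/finite speed. In the radial case, the solution at $x$ equals averages of $f$ over spheres of radius $t$ around $x$. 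A radial $f\in H^{1/2}$ with a logarithmic singularity on a sphere at radius $r_0$ forces divergence on every radius $|x|$ reached by the propagating singularity. Summing a sequence of such functions with dense $r_0$ makes the divergent radii of full measure, so $\dim=n$ for all $s\le1/2$.
\end{itemize}

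The main obstacle is the kernel estimate in the intermediate range $a/4\le s\le1/4$, where the degeneracy of the phase $\rho^a$ forces the nonsharp exponent. I would first try optimising the van der Corput/non-stationary split at the scale $|x-y|\sim R^{a-1}|t-t'|$.
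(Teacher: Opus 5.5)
\textbf{Upper bounds: same route as the paper, but the exponent is never derived.} Your upper-bound half is the paper's argument: push $\mu$ forward to an $\alpha_0=\alpha-(n-1)$-dimensional measure on radii, linearize $t=t(r)$, dualize ($TT^*$), strip off the Bessel asymptotics, cut into dyadic frequencies $R$, use second-derivative van der Corput where the phase can be stationary and the first derivative elsewhere, then sum against the Frostman bound (the paper uses Lemma \ref{CS_lemma} for this last step). What is missing is the balancing that actually produces $\frac12+\frac{1-4s}{2(1-a)}$; the $R^{\dots}$ placeholder should also go.

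The balancing is short. Since $|t(r)-t(r')|\le 2$, the phase $Ry\lambda+R^a(t-t')\lambda^a$ can only be stationary when $|y|\lesssim R^{a-1}$. In that region, either $R^a|t-t'|\gtrsim R|y|$ and van der Corput gives $(R|y|)^{-1/2}$, or $|\phi'|\gtrsim R|y|$. Hence $|K_R(y)|\lesssim R^{1-2s}\min\{1,(R|y|)^{-1/2}\}$ for $|y|\lesssim R^{a-1}$, and $|K_R(y)|\lesssim R^{1-2s}(R|y|)^{-N}$ beyond. Schur's test against $\mu_0$ then gives $R^{1-2s}\max\{R^{-\alpha_0},R^{-a/2-(1-a)\alpha_0}\}$; the second term comes from $|y|\sim R^{a-1}$ when $\alpha_0>1/2$. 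This is summable over dyadic $R$ exactly when $\alpha_0>\max\{1-2s,\frac12+\frac{1-4s}{2(1-a)}\}$, which is the stated bound after adding $n-1$. The $s>1/2$ step needs no smoothing: it is the radial pointwise bound $|e^{it\psi}f(x)|\lesssim|x|^{-(n-1)/2}\|f\|_{H^s}$, uniform in $t$ (Lemma \ref{lemma_s>1/2}).

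\textbf{Lower bounds: genuine gaps.} The paper takes both full-dimension lower bounds from Walther; your replacements do not work as written.

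For $a=1$: $e^{it\sqrt{-\Delta}}f(x)$ is not a spherical average of $f$, and the half-wave group has no finite speed of propagation. Its kernel equals $c\,t\,(|x|^2-t^2)^{-(n+1)/2}$ for $|x|>|t|$. Kirchhoff's formula represents $\sin(t\sqrt{-\Delta})/\sqrt{-\Delta}$, and only for $n=3$. Also, a logarithmic singularity across a sphere is not in $H^{1/2}$. A repair would pass to $\mathrm{Re}\,e^{it\sqrt{-\Delta}}f=\cos(t\sqrt{-\Delta})f$ for real $f$. One would then use a radial reduction (for $n=3$, $r\cos(t\sqrt{-\Delta})f(r)=\tfrac12[(r+t)f(r+t)+(r-t)f(r-t)]$ for $r>t$) and nonnegative $\log^\beta$ profiles, $\beta<1/2$, at a dense set of radii so that no cancellation occurs.

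For $s<a/4$: a single modulation over the full band $[R,2R]$ cannot keep the phase stationary on a positive-measure set of radii. The $a/4$ threshold instead comes from:
\begin{itemize}
\item bands of width $R^{1-a/2}$, on which $t\rho^a$ is linear up to $O(1)$;
\item packets that drift only a distance $R^{a-1}$ in unit time;
\item about $R^{1-a}$ separated copies to fill an interval.
\end{itemize}
This gives $\sup_t|u|\gtrsim R^{a/4-s}\|f\|_{H^s}$ on a set of measure $\gtrsim1$. Even then, a failed maximal inequality does not by itself produce a radial $f$ whose divergence set has dimension $n$. Stein's maximal principle needs translation invariance, which the radial class lacks. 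You must add, for example, Step 1 of Section \ref{sec4}: a.e. convergence for all radial $H^s$ data forces the maximal function to be finite a.e., and Nikishin then gives a weighted weak-type bound, which the uniformly large bad sets contradict.

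For the $n-2s$ lower bound: it is needed on all of $[a/4,1/2)$, not only for $s\ge1/4$. Its reason is not that the singularity persists under the flow. Rather, the representative of $f$ itself is singular on the Bennett--Rogers set, so $\lim_{t\to0}u(x,t)=f(x)$ cannot hold there (Remark \ref{BR_remark}).
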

The results for $a>1$ given by (\ref{BR_estimates}) and for $a \in (0,1),\:a=1$ given by Theorem \ref{radial_concave_thm} are presented in Figure \ref{radialfigure1}.

\begin{figure}[htbp]
  \centering
  
  \begin{subfigure}{0.3\textwidth}
    \centering
    \begin{tikzpicture}
\draw [->, thick] (0,2) -- (2,2) node [below left] {$s$};
\draw [->, thick] (0,2) -- (0,7)  node [below left] {$\wtilde{\fd}_{\psi,n}(s)$};
\draw[red] [-, thick] (0,6) -- (0.5,6);
\draw[red][thick] plot[domain=0.5:1, smooth] (\x,{6-2*\x});
\draw[red] [-, thick] (1,2) -- (2,2);
\draw [fill] (1,2) circle [radius=.05] node [below] {\scriptsize$\frac{1}{2}$};
\draw [fill] (0.5,2) circle [radius=.05] node [below] {\scriptsize$\frac{1}{4}$};
\draw [fill] (0,6) circle [radius=.05] node [left] {\scriptsize$n$};
\draw [fill] (0,5) circle [radius=.05] node [left] {\scriptsize$n-\frac{1}{2}$};
\draw [fill] (0,4) circle [radius=.05] node [left] {\scriptsize$n-1$};
\draw[gray] [-, dashed] (1,2) -- (1,4);
\draw[gray] [-, dashed] (0.5,2) -- (0.5,6);
\draw[gray] [-, dashed] (0,4) -- (1,4);
\draw[gray] [-, dashed] (0,5) -- (0.5,5);
\end{tikzpicture}
    \caption{$a>1$}
    \label{radialfigure1.1}
  \end{subfigure}
  \hfill 
  \begin{subfigure}{0.3\textwidth}
    \centering
    \begin{tikzpicture}
\draw [->, thick] (0,2) -- (2,2) node [below left] {$s$};
\draw [->, thick] (0,2) -- (0,7)  node [below left] {$\wtilde{\fd}_{\psi,n}(s)$};
\draw[red] [-, thick] (0,6) -- (0.25,6);
\draw[red][thick] plot[domain=0.5:1, smooth] (\x,{6-2*\x});
\draw[red] [-, thick] (1,2) -- (2,2);
\draw[green][thick]plot[domain=0.25:0.5, smooth] (\x,{6-2*\x});
\draw[cyan][thick]plot[domain=0.25:0.5, smooth] (\x,{7-4*\x});
\draw [fill] (1,2) circle [radius=.05] node [below] {\scriptsize$\frac{1}{2}$};
\draw [fill] (0.5,2) circle [radius=.05] node [below] {\scriptsize$\frac{1}{4}$};
\draw [fill] (0.25,2) circle [radius=.05] node [below] {\scriptsize$\frac{a}{4}$};
\draw [fill] (0,6) circle [radius=.05] node [left] {\scriptsize$n$};
\draw [fill] (0,4) circle [radius=.05] node [left] {\scriptsize$n-1$};
\draw [fill] (0,5) circle [radius=.05] node [left] {\scriptsize$n-\frac{1}{2}$};
\draw [fill] (0,5.5) circle [radius=.05] node [left] {\scriptsize$n-\frac{a}{2}$};
\draw[gray] [-, dashed] (0,5.5) -- (0.25,5.5);
\draw[gray] [-, dashed] (0.25,2) -- (0.25,6);
\draw[gray] [-, dashed] (0.5,2) -- (0.5,5);
\draw[gray] [-, dashed] (1,2) -- (1,4);
\draw[gray] [-, dashed] (0,4) -- (1,4);
\draw[gray] [-, dashed] (0,5) -- (0.5,5);
\draw[red] [-, thick] (1.5,6) -- (1.7,6);
\pgftext[at={\pgfpoint{3cm}{6cm}}]{\textcolor{red}{sharp result}};
\draw[cyan] [-, thick] (1.5,5.5) -- (1.7,5.5);
\pgftext[at={\pgfpoint{3cm}{5.5cm}}]{\textcolor{cyan}{upper bound}};	
\draw[green] [-, thick] (1.5,5) -- (1.7,5);
\pgftext[at={\pgfpoint{3cm}{5cm}}]{\textcolor{green}{lower bound}};	
\end{tikzpicture}
    \caption{$a \in (0,1)$}
    \label{radialfigure1.2}
  \end{subfigure}
  \hfill 
  \begin{subfigure}{0.3\textwidth}
    \centering
    \begin{tikzpicture}
\draw [->, thick] (0,2) -- (2,2) node [below left] {$s$};
\draw [->, thick] (0,2) -- (0,7)  node [below left] {$\wtilde{\fd}_{\psi,n}(s)$};
\draw[red] [-, thick] (0,6) -- (1,6);
\draw[red] [-, thick] (1,2) -- (2,2);
\draw [fill] (1,2) circle [radius=.05] node [below] {\scriptsize$\frac{1}{2}$};
\draw [fill] (0,6) circle [radius=.05] node [left] {\scriptsize$n$};
\draw[gray] [-, dashed] (1,2) -- (1,6);
\end{tikzpicture}
    \caption{$a =1$}
    \label{radialfigure1.3}
  \end{subfigure}
  \caption{$\wtilde{\fd}_{\psi,n}$ for fractional Schr\"odinger, $\psi(\|\xi\|)=\|\xi\|^a$}
  \label{radialfigure1}
\end{figure}

Some remarks are now in order.

\begin{remark}
The result given by (\ref{BR_estimates}), due to Bennett-Rogers \cite{BR}, for the fractional Schr\"odinger equations with convex phase and radial initial data, refined the result of Prestini \cite{Prestini} for the classical setting of the Lebesgue measure on $\R^n$. In the same spirit, Theorem \ref{radial_concave_thm} refines the results of Walther \cite{Walther1, Walther2, Walther3} for the concave phases and the Wave propagation.

\end{remark}

\begin{remark}
In Subsection \ref{subsec6.3}, as applications of a transference principle (Proposition \ref{radial_transference_proposition}), we further our study as
\begin{enumerate}
\item  we obtain Hausdorff dimension estimates of divergence sets for families of abstract dispersive equations, with radial initial data (Corollaries \ref{radial_corollary1}, \ref{radial_corollary2}, and \ref{radial_corollary3}) which can be viewed as variations of the results obtained in Section \ref{sec5};
\item as special cases,
\begin{enumerate}
\item we recover the known result for the Beam propagation (Remark \ref{radial_beam_remark});
\item we also obtain genuinely new results for the linearized Boussinesq (Remark \ref{radial_beam_remark}) and the Klein-Gordon propagations (Remark \ref{radial_klein_remark}).
\end{enumerate}
\end{enumerate}
\end{remark}

We now proceed with our departure from $\R^n$ to the setting of the real hyperbolic spaces. As mentioned at the very beginning, resolving the Carleson's conjecture for the Schr\"odinger equation on $\R^n$ required advanced tools from decoupling theory. In the case of a Riemannian manifold, where the volume measure neither satisfies any doubling property nor does it admit dilations, the above mentioned Euclidean machineries work no longer and thus it offers a fresh challenge. The prototypical examples of such manifolds are the real hyperbolic spaces $\mathbb{H}^n$, which due to their negative curvature, are of purely exponential volume growth. Inspite of recent attempts and some partial results \cite{WZ, Dewan4}, a complete resolution of the Carleson's conjecture on $\mathbb{H}^n$ seems to be a distant prospect.

In the special case of geodesically radial (or simply radial) initial data, however, Carleson's conjecture for various dispersive PDEs has recently been settled in a series of papers by the author \cite{Dewan1, DR, Dewan2, Dewan3}. We now focus on the prototypical setting of the three dimensional real hyperbolic space $\Hb^3$ by considering \begin{equation*}
\wtilde{\fd}_{\psi,\Hb^3}(s):= \displaystyle\sup_{\substack{f \in H^s(\Hb^3) \\ f \:\:\text{ radial }}} dim_{\cH}\D_{\psi}(f)\:,
\end{equation*}
where 
\begin{equation*}
\D_{\psi}(f):=\left\{x \in \Hb^3 : \displaystyle\lim_{t \to 0} e^{it\psi\left(\sqrt{-\Delta_{\Hb^3}}\right)}f(x) \ne f(x)\right\}\:,\:\: f \in H^s(\Hb^3)\:,
\end{equation*}
and $\Delta_{\Hb^3}$ is the Laplace-Beltrami operator on $\Hb^3$. Our next result is the desired fractal refinement:
\begin{theorem} \label{H3_theorem}
For radial initial data on $\Hb^3$, we have
\begin{enumerate}
\item for the Schr\"odinger equation, the fractional Schr\"odinger equations with convex phase, the Beam and the linearized Boussinesq equation,
\begin{equation*} 
\wtilde{\fd}_{\psi,\Hb^3}(s)= \begin{cases}
3,\:\: &s<\frac{1}{4}\\
3-2s,\:\: &\frac{1}{4} \le s<\frac{1}{2}\\
0,\:\: &\frac{1}{2} < s\:;
\end{cases}
\end{equation*}  
\item for the fractional Schr\"odinger equations with concave phase,  i.e. $a \in (0,1)$,
\begin{enumerate}
\item the sharp result in the range,
\begin{equation*}
\wtilde{\fd}_{\psi,\Hb^3}(s) =\begin{cases}
3\:,\:\: &s<\frac{a}{4}\\
3-2s\:,\:\: &\frac{1}{4} \le s<\frac{1}{2}\\
0\:,\:\: &\frac{1}{2} < s\:;
\end{cases}
\end{equation*}
\item and in the remaining range, \\
the upper bound,
\begin{equation*}
\wtilde{\fd}_{\psi,\Hb^3}(s) \le \frac{5}{2}+\frac{1-4s}{2(1-a)}\:,\:\: \frac{a}{4} \le s \le \frac{1}{4}\:,
\end{equation*}
and the lower bound,
\begin{equation*}
\wtilde{\fd}_{\psi,\Hb^3}(s) \ge 3-2s\:,\:\: \frac{a}{4} \le s \le \frac{1}{4}\:;
\end{equation*}
\end{enumerate}
\item for the Wave equation and the Klein-Gordon equations, we have
\begin{equation*} 
\wtilde{\fd}_{\psi,\Hb^3}(s)= \begin{cases}
3,\:\: &s \le \frac{1}{2}\\
0,\:\: &\frac{1}{2} < s\:.
\end{cases}
\end{equation*}
\end{enumerate}
\end{theorem}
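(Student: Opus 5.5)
The plan is to transfer everything from $\R^3$ (radial data) to $\Hb^3$ (radial data) using the classical intertwining between radial analysis on the two spaces. In geodesic polar coordinates $x=(r,\omega)$ on $\Hb^3$, the radial Laplace--Beltrami operator satisfies
\begin{equation*}
\frac{\sinh r}{r}\,\left(\Delta_{\Hb^3}+1\right) F \;=\; \Delta_{\R^3}\left(\frac{\sinh r}{r}\,F\right)
\end{equation*}
for radial $F$. Equivalently, the spherical Fourier transform on $\Hb^3$ of a radial $f$ equals the Euclidean Fourier transform of the radial function $\mathcal{A}f(r):=\frac{\sinh r}{r}f(r)$, evaluated at spectral parameter $\lambda$, with $-\Delta_{\Hb^3}$ having spectrum $\lambda^2+1$. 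Hence
\begin{equation*}
e^{it\psi(\sqrt{-\Delta_{\Hb^3}})}f=\mathcal{A}^{-1}\, e^{it\psi(\sqrt{1-\Delta_{\R^3}})}\,\mathcal{A}f ,
\end{equation*}
so $\D_{\psi}(f)$ on $\Hb^3$ corresponds to $\D_{\psi\circ\sqrt{1+\cdot^{\,2}}}(\mathcal A f)$ on $\R^3$. This uses only that $\frac{\sinh r}{r}$ is smooth and nonvanishing.

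The proof then has three steps.

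\textbf{Step 1: a local Sobolev correspondence.} I would show that $\mathcal{A}$ maps radial $H^s(\Hb^3)$ into radial $H^s_{loc}(\R^3)$, with inverse mapping back, after localisation. The point is that on compact sets the two metrics are comparable and $\frac{\sinh r}{r}$ is a smooth multiplier. Globally, the spherical Plancherel measure $|c(\lambda)|^{-2}\asymp\lambda^2$ matches the Euclidean radial measure, which gives the exact isometry of $L^2$-based Sobolev norms with weights $(1+\lambda^2)^{s}$.

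\textbf{Step 2: comparing dimensions.} Since Hausdorff dimension is local and bi-Lipschitz invariant, and $\mathcal{A}$ is bi-Lipschitz on compacts with unit Jacobian distortion bounded there, I would conclude
\[
\wtilde{\fd}_{\psi,\Hb^3}(s)=\wtilde{\fd}_{\tilde\psi,3}(s),\qquad \tilde\psi(\lambda)=\psi\left(\sqrt{1+\lambda^2}\right).
\]
One subtlety concerns the initial datum's own singular set. For the lower bounds I would use singular sets of Sobolev functions in the spirit of \v{Z}ubrini\'c, building radial $f\in H^s$ blowing up on a set of dimension $3-2s$. This set is a union of spheres, which matches the radial Euclidean counterexamples and transfers directly.

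\textbf{Step 3: identifying $\tilde\psi$ and applying the Euclidean results.} Here one checks the high-frequency behaviour of $\tilde\psi$ and invokes the radial transference (Proposition \ref{radial_transference_proposition}), together with (\ref{BR_estimates}) and Theorem \ref{radial_concave_thm}.
\begin{itemize}
\item For Schr\"odinger, $\tilde\psi(\lambda)=1+\lambda^2$ differs from $\lambda^2$ by a constant, giving case (1).
\item For fractional powers, $(1+\lambda^2)^{a/2}=\lambda^a+O(\lambda^{a-2})$, which is of comparable oscillation with $\lambda^a$. This gives the convex case of (1) and all of case (2).
\item For Beam and linearized Boussinesq the phases are asymptotically quadratic, giving case (1).
\item For the wave equation, $\sqrt{1+\lambda^2}=\lambda+O(1/\lambda)$, and similarly for Klein--Gordon. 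Both fall in the $a=1$ class of Theorem \ref{radial_concave_thm}(2), giving case (3).
\end{itemize}

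The main obstacle I expect is Step 2, namely making the localisation rigorous. Concretely, $\mathcal{A}f$ lies only in weighted or local Sobolev spaces, because $\frac{\sinh r}{r}$ grows exponentially. So I would have to cut off with a radial bump and control the pointwise behaviour near $t=0$ of the propagation of the far part. My first attempt is to use that the Euclidean statements are local: split $\mathcal{A}f=\chi\mathcal{A}f+(1-\chi)\mathcal{A}f$. I would then argue, via the $H^s$-isometry at the spectral level (Plancherel measure matching), that the map is in fact an isometry of radial $H^s(\Hb^3)$ onto radial $H^s(\R^3)$ under the modified weight $(1+\lambda^2)^s$. This would avoid the cutoff altogether, and only the Euclidean theorems for $\tilde\psi$ would be needed.
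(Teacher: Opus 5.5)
The gap is in Step 3, for the fractional Schr\"odinger equation with convex phase $a>2$. There
\begin{equation*}
(1+\lambda^2)^{a/2}-\lambda^a=\tfrac{a}{2}\lambda^{a-2}+O(\lambda^{a-4})\longrightarrow\infty ,
\end{equation*}
so your remainder $O(\lambda^{a-2})$ is bounded only when $a\le 2$. For $a>2$, $\wtilde\psi$ and $\lambda^a$ are \emph{not} of comparable oscillation, and Proposition~\ref{radial_transference_proposition} cannot be invoked. Its proof needs $|\psi_1-\psi_2|\le C$ in order to get $\|g_j\|_{H^{s_0}}\le C^j\|f_2\|_{H^{s_0}}$ and sum $\sum_j C^j/j!$ in a fixed Sobolev space.

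Truncating the binomial series does not rescue this. It only gives multi-term phases such as $\lambda^a+\frac a2\lambda^{a-2}$, about which the pure-power estimates (\ref{BR_estimates}) say nothing. What is missing is a Euclidean radial result that holds for general convex phases, not just pure powers. The paper supplies it through Lemma~\ref{chinese_lemma}, with $m_1=2$, $m_2=a$ and $\wtilde\psi''>0$, for $s\in[1/4,1/2)\cup(1/2,\infty)$. For $s<1/4$ it uses \cite[Theorem 1.1]{Dewan2}. The other part-(1) phases (Schr\"odinger, Beam, linearized Boussinesq, where $\wtilde\psi=\lambda^2+O(1)$) and the range $1<a\le 2$ are covered by your route. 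Going through Corollary~\ref{radial_corollary1} there is a valid alternative to those citations. Parts (2) and (3) go through as you describe, matching the paper.

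The rest of your framework is sound, and Step 2 is in fact more elementary than the paper's argument.
\begin{itemize}
\item Your $\mathcal{A}f=\frac{\sinh r}{r}f$ is, up to a constant, exactly the correspondence of Lemma~\ref{schwartz_correspondence_lemma}. Comparing $\mathscr{F}f(\lambda)=\int_0^\infty f(r)\frac{\sin\lambda r}{\lambda}\sinh r\,dr$ with the radial Euclidean transform confirms this.
\item Because the volume densities are $\sinh^2 r$ versus $r^2$, $\mathcal{A}$ is a global isometry of radial $H^s(\Hb^3)$ onto radial $H^s(\R^3)$. So the cutoff you worried about is unnecessary.
\item The Gaussian regularisations are spectral multipliers, so (\ref{H3-N-eq1}) holds for every radial $f\in H^s(\Hb^3)$ and all $N,t,r$. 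Hence $\exp_o^{-1}(\D_\psi(f))=\D_{\wtilde\psi}(\mathcal{A}f)$ exactly.
\item Local bi-Lipschitz invariance of Hausdorff dimension under $\exp_o$ then gives $\wtilde{\fd}_{\psi,\Hb^3}=\wtilde{\fd}_{\wtilde\psi,3}$ directly. Note that the point map here is $\exp_o$, not $\mathcal{A}$.
\item This bypasses the Frostman--Nikishin machinery in the proof of Theorem~\ref{H3_transference}, and the lower bounds transfer automatically.
\end{itemize}
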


The results in Theorem \ref{H3_theorem} are presented in Figure \ref{H3figure}.

\begin{figure}[htbp]
  \centering
  
  \begin{subfigure}{0.3\textwidth}
    \centering
    \begin{tikzpicture}
\draw [->, thick] (0,2) -- (2,2) node [below left] {$s$};
\draw [->, thick] (0,2) -- (0,7)  node [below left] {$\wtilde{\fd}_{\psi,\Hb^3}(s)$};
\draw[red] [-, thick] (0,6) -- (0.5,6);
\draw[red][thick] plot[domain=0.5:1, smooth] (\x,{6-2*\x});
\draw[red] [-, thick] (1,2) -- (2,2);
\draw [fill] (1,2) circle [radius=.05] node [below] {\scriptsize$\frac{1}{2}$};
\draw [fill] (0.5,2) circle [radius=.05] node [below] {\scriptsize$\frac{1}{4}$};
\draw [fill] (0,6) circle [radius=.05] node [left] {\scriptsize$3$};
\draw [fill] (0,5) circle [radius=.05] node [left] {\scriptsize$\frac{5}{2}$};
\draw [fill] (0,4) circle [radius=.05] node [left] {\scriptsize$2$};
\draw[gray] [-, dashed] (1,2) -- (1,4);
\draw[gray] [-, dashed] (0.5,2) -- (0.5,6);
\draw[gray] [-, dashed] (0,4) -- (1,4);
\draw[gray] [-, dashed] (0,5) -- (0.5,5);
\end{tikzpicture}
    \caption{PDEs in part (1)}
    \label{H3figure1}
  \end{subfigure}
  \hfill 
  \begin{subfigure}{0.3\textwidth}
    \centering
    \begin{tikzpicture}
\draw [->, thick] (0,2) -- (2,2) node [below left] {$s$};
\draw [->, thick] (0,2) -- (0,7)  node [below left] {$\wtilde{\fd}_{\psi,\Hb^3}(s)$};
\draw[red] [-, thick] (0,6) -- (0.25,6);
\draw[red][thick] plot[domain=0.5:1, smooth] (\x,{6-2*\x});
\draw[red] [-, thick] (1,2) -- (2,2);
\draw[cyan][thick]plot[domain=0.25:0.5, smooth] (\x,{7-4*\x});
\draw[cyan][green]plot[domain=0.25:0.5, smooth] (\x,{6-2*\x});
\draw [fill] (1,2) circle [radius=.05] node [below] {\scriptsize$\frac{1}{2}$};
\draw [fill] (0.5,2) circle [radius=.05] node [below] {\scriptsize$\frac{1}{4}$};
\draw [fill] (0.25,2) circle [radius=.05] node [below] {\scriptsize$\frac{a}{4}$};
\draw [fill] (0,6) circle [radius=.05] node [left] {\scriptsize$3$};
\draw [fill] (0,4) circle [radius=.05] node [left] {\scriptsize$2$};
\draw [fill] (0,5) circle [radius=.05] node [left] {\scriptsize$\frac{5}{2}$};
\draw [fill] (0,5.5) circle [radius=.05] node [left] {\scriptsize$3-\frac{a}{2}$};
\draw[gray] [-, dashed] (0,5.5) -- (0.25,5.5);
\draw[gray] [-, dashed] (0.25,2) -- (0.25,6);
\draw[gray] [-, dashed] (0.5,2) -- (0.5,5);
\draw[gray] [-, dashed] (1,2) -- (1,4);
\draw[gray] [-, dashed] (0,4) -- (1,4);
\draw[gray] [-, dashed] (0,5) -- (0.5,5);
\draw[red] [-, thick] (1.5,6) -- (1.7,6);
\pgftext[at={\pgfpoint{3cm}{6cm}}]{\textcolor{red}{sharp result}};
\draw[cyan] [-, thick] (1.5,5.5) -- (1.7,5.5);
\pgftext[at={\pgfpoint{3cm}{5.5cm}}]{\textcolor{cyan}{upper bound}};	
\draw[cyan] [-, thick] (1.5,5) -- (1.7,5);
\pgftext[at={\pgfpoint{3cm}{5.cm}}]{\textcolor{green}{lower bound}};	
\end{tikzpicture}
    \caption{PDEs in part (2)}
    \label{H3figure2}
  \end{subfigure}
  \hfill 
  \begin{subfigure}{0.3\textwidth}
    \centering
    \begin{tikzpicture}
\draw [->, thick] (0,2) -- (2,2) node [below left] {$s$};
\draw [->, thick] (0,2) -- (0,7)  node [below left] {$\wtilde{\fd}_{\psi,\Hb^3}(s)$};
\draw[red] [-, thick] (0,6) -- (1,6);
\draw[red] [-, thick] (1,2) -- (2,2);
\draw [fill] (1,2) circle [radius=.05] node [below] {\scriptsize$\frac{1}{2}$};
\draw [fill] (0,6) circle [radius=.05] node [left] {\scriptsize$3$};
\draw[gray] [-, dashed] (1,2) -- (1,6);
\end{tikzpicture}
    \caption{PDEs in part (3)}
    \label{H3figure3}
  \end{subfigure}
  \caption{$\wtilde{\fd}_{\psi,\Hb^3}$ for dispersive PDEs in Theorem \ref{H3_theorem} on $\Hb^3$}
  \label{H3figure}
\end{figure}

\begin{remark}
\begin{enumerate}
\item Theorem \ref{H3_theorem} is the first result of its kind, outside the setting of flat Euclidean geometry.
\item In particular, coupled with \cite{Cowling}, it resolves the classical Carleson's problem for the Wave equation for the Riemannian volume measure on $\Hb^3$.  
\end{enumerate}
\end{remark}

We now briefly summarize the key ideas and novelties of this article:
\begin{itemize}
\item The proof of Theorem \ref{transference_theorem} consists of the following steps:


\begin{itemize}
\item First, starting from the estimate on $\fd_{\psi_1,n}$, by applying the Frostmann's lemma, we get a pointwise convergence result to the initial data for the $\psi_1$-dispersive propagation, beyond a fractal threshold. Then by the dynamics of the $\psi_1$-dispersive evolution group, we propagate the pointwise convergence phenomenon for all time. Combining this with the fixed-time fractal Sobolev embedding (Corollary \ref{fractal-Sobolev-embedding}), we obtain a.e. finiteness of the maximal function, with respect to the fractal measures. Then appealing to Nikishin theory, we obtain weighted, weak, fractal maximal estimates for the $\psi_1$-dispersive propagation. 


\item Then we transfer the above weighted, weak, fractal maximal estimates for the $\psi_1$-dispersive propagation to its $\psi_2$-counterpart. In order to get this, we work with Schwartz class initial data and focus separately on small and large frequencies. The former is easier to handle and follows from a simple application of the Cauchy-Schwarz inequality. The latter, however, is more interesting and is estimated by making a Taylor series expansion of the Fourier multipliers in time and utilizing the property of comparable oscillation.


\item Finally, by the standard method of maximal functions, fractal Sobolev embedding for the initial data (Corollary \ref{fractal-Sobolev-embedding}) and Frostmann's lemma, we get the subordination relation $\fd_{\psi_2,n} \le \fd_{\psi_1,n}$. By the symmetry of the argument, the result then follows.
\end{itemize}


\item  As applications of Theorem \ref{transference_theorem}, we obtain Hausdorff dimension estimates of divergence sets for families of abstract dispersive equations, classified solely in terms of the oscillation of their Fourier multipliers in high frequency (Corollaries \ref{corollary1}, \ref{corollary2}, \ref{corollary3} and \ref{corollary4}). As special cases, we recover the known one-dimensional results for the linearized Boussinesq and the Beam propagations (Remark \ref{recovering_results_beam}), by means of a simple one-line proof, compared to the oscillatory integral estimates involving higher derivatives in \cite{LiLi,LLN}. We also obtain genuinely new results for the linearized Boussinesq and the Beam propagations in higher order dimensions (Remark \ref{new_results_beam}) and the Klein-Gordon propagations in all dimensions (Remark \ref{new_results_klein}).


\item The geometric input of radial symmetry brings the analytic ingredient of the Bessel functions into the mix. In view of this, the proof of part (1) of Theorem \ref{radial_concave_thm}, requires consideration of three different regularity regimes:
\begin{itemize}
\item for low regularity i.e. $s<a/4$, the result follows by Walther's work in \cite{Walther3};
\item for high regularity i.e. $s>1/2$, the result follows by the pointwise decay of the Bessel functions (Lemma \ref{lemma_s>1/2});
\item in between i.e. $s \in [a/4,1/2)$, the upper bound is obtained by efficiently estimating the joint oscillation of the Bessel function and the Fourier multiplier. This is where the concavity of the phase becomes crucial. The lower bound in this range follows from the size estimates of the singular sets of radial Sobolev functions (Remark \ref{BR_remark}).
\end{itemize}


\item As applications of a transference principle (Proposition \ref{radial_transference_proposition}) specialized to radial initial data, we obtain Hausdorff dimension estimates of divergence sets for families of abstract dispersive equations, with radial initial data (Corollaries \ref{radial_corollary1}, \ref{radial_corollary2}, and \ref{radial_corollary3}) which can be viewed as variations of the results obtained in Section \ref{sec5}. As special cases, we recover the known result for the Beam propagation (Remark \ref{radial_beam_remark}), by means of a simple one-line proof, compared to the oscillatory integral estimates involving higher order derivatives in \cite{LLN}. We also obtain genuinely new results for the linearized Boussinesq (Remark \ref{radial_beam_remark}) and the Klein-Gordon propagations (Remark \ref{radial_klein_remark}).
\item In Section \ref{sec7}, we study the dimensions of divergence sets for dispersive equations in the non-Euclidean setting of $\Hb^3$, which is the first such attempt known to the author. Our approach does not imitate the Euclidean arguments for $\Hb^3$, but rather is built upon the following correspondence results:
\begin{itemize}
\item a one-to-one correspondence between the $\alpha$-dimensional fractal measures on geodesic annuli in a non-positively curved Hadamard manifold and the ones on annuli in the tangent space (with respect to the flat metric), induced by the Riemannian exponential map (Lemma \ref{fractal_measure_correspondence_lemma});
\item a one-to-one correspondence between the classes of radial Schwartz class functions on $\Hb^3$ and $\R^3$ satisfying identities in terms of the Fourier transforms  and the Sobolev norms. This is obtained via the Abel transforms on $\R^3$ and on Ch\'ebli-Trim\`eche hypergroups, leveraging the underlying Riemannian symmetric space structure of $\Hb^3$ (Lemma \ref{schwartz_correspondence_lemma}).
\end{itemize}
\item Using the above correspondence results, we establish a connection between dispersive propagations on $\Hb^3$ and $\R^3$ with suitable Fourier multipliers, to obtain the fixed-time fractal Sobolev embedding (Lemma \ref{H3-Sobolev-embedding}) and a maximal estimate in high regularity $s>1/2$ (Lemma \ref{H3-lemma_s>1/2}) on $\Hb^3$.
\item All of the ideas mentioned in this article until now, culminate in the form of an inter-spatial transference principle (Theorem \ref{H3_transference}) which allows us to conclude about the Hausdorff dimensions of the divergence sets of dispersive equations on $\Hb^3$ from their counterpart on $\R^3$. This is the heart of the matter in proving Theorem \ref{H3_theorem}.

\item In the process, we also conduct a quantitative study of the size of singular sets of Sobolev functions on $\Hb^3$ (Theorem \ref{sobolev_singularity_theorem}), in the spirit of \v{Z}ubrini\'c \cite{Zubrinic}. In order to do so, we also obtain the explicit Bessel kernel on $\Hb^3$ via the heat semi-group representation of the fractional Laplace-Beltrami operator (Lemma \ref{lemma_bessel_kernel}). These results may be of independent interest.

\end{itemize}

We end this commentary with the layout of this article. In Section \ref{sec2}, we fix the notations. In Section \ref{sec3}, we explicitly detail the underlying technical set-up. In Section \ref{sec4}, we prove Theorem \ref{transference_theorem} and present some of its applications in Section \ref{sec5}. In Section \ref{sec6}, we prove Theorem \ref{radial_concave_thm} and also results for more general dispersive families with radial initial data. In Section \ref{sec7}, we prove Theorem \ref{H3_theorem} and the other results on $\Hb^3$. Finally, in Section \ref{sec8}, we conclude by making some remarks and posing some new problems.

\section{Notations}
\label{sec2}
In this section, we fix the notations of this article. $\N$ will denote the set of positive integers. For $x \in \R$, $\lfloor x\rfloor$ ($\lceil x\rceil$ resp.) will denote the largest integer smaller than or equal to $x$ (the smallest integer greater than or equal to $x$ resp.). $\I$ will be used to denote the interval $[-1,1]$, whereas $0$ will be used to denote both the number as well as the vector $(0,\cdots,0) \in \R^n$ and will be clear from the context. For $x \in \R^n$, $\|x\|$ will denote the Euclidean norm.
$\mathbb{B}(x,r)$ will denote the ball in $\R^n$ with center $x$ and radius $r$ and $\mathbb{A}(r_1,r_2)$ will denote the annulus in $\R^n$ with center $0$, inner radius $r_1$ and outer radius $r_2$. For a set $A$, $\chi_A$ will be used to denote the indicator function of $A$. For a metric space $X,\:\mathscr{B}(X)$ will denote the collection of Borel subsets of $X$. `C' will denote positive constants whose values may change on each occurrence. For two non-negative functions $f_1,f_2$, $f_1 \lesssim f_2$ ( $f_1 \gtrsim f_2$ resp.) will denote $f_1 \le Cf_2$ ($f_1 \ge (1/C)f_2$ resp.) for some $C\ge 1$. The notation $f_1 \asymp f_2$ will be used if both $f_1 \lesssim f_2$ and $f_1 \gtrsim f_2$ are true. We use the notation $f_1 \gg f_2$ if there exists a large constant $M>0$ such that $f_1 \ge Mf_2$. For complex valued $f_1$, we write $f_1=\mathcal{O}(f_2)$ if $|f_1| \lesssim f_2$.

\section{Some technical nitty-gritty}
\label{sec3}
A priori, elements of $H^s(\R^n)$ are just equivalence classes and such different representations can differ on sets of full Hausdorff dimensions. Thus to make the notion of the Hausdorff dimension of divergence sets well-defined, one must fix a canonical choice of each such equivalence class in $H^s(\R^n)$, which is also well-defined with respect to fractal measures. In this section, we make this precise by following \cite{BBCR}.

For initial data $f$ belonging to the Schwartz class $\Ss(\R^n)$, the solution of the dispersive equation (\ref{dispersive}),  expressed pointwise by the formula,
\begin{equation} \label{schwartz_solution}
u(x,t) = \frac{1}{(2\pi)^n}\int_{\R^n} e^{i\left(x\cdot \xi + t\psi(\|\xi\|)\right)}\:\what{f}(\xi)\:d\xi\:,
\end{equation} 
is continuous and hence there are no issues. But for a general $f \in H^s(\R^n)$, the integral in (\ref{schwartz_solution}) need not exist. In this scenario, taking $\Phi$ to be the Gaussian, i.e. $\Phi(\xi):=e^{-\|\xi\|^2}$, we consider the operators,
\begin{equation} \label{N-scale}
S^N_{\psi,t}f(x):= \frac{1}{(2\pi)^n}\int_{\R^n} \Phi\left(N^{-1}\xi\right)\:e^{i\left(x\cdot \xi + t\psi(\|\xi\|)\right)}\:\what{f}(\xi)\:d\xi\:,\:\:\:N \in \N\:,
\end{equation}  
and then define $u$ as the pointwise limit (whenever it exists),
\begin{equation} \label{canonical_representative}
u(\cdot,t):=\lim_{N \to \infty} S^N_{\psi,t}f\:.
\end{equation}
It is easy to see that, the choice of $u(\cdot,t)$ given by (\ref{canonical_representative}) coincides with the classical $L^2$-limits, in the sense of Lebesgue. But interestingly, it is also well-defined with respect to fractal measures for any $s>0$.

We now recall fractal measures. Let $B$ be a Borel subset of $\R^n$. For $\alpha \in [0,n]$, we denote by $\M^\alpha(B)$ the collection of {\em $\alpha$-dimensional measures} on $B$, that is, positive Radon measures $\mu$ with $Supp(\mu) \subset B$ and
\begin{equation} \label{Frostamnn_constant}
c_\alpha(\mu):= \displaystyle\sup_{x \in \R^n,r>0} \frac{\mu(\mathbb{B}(x,r))}{r^\alpha} < \infty\:.
\end{equation}  
The quantity $c_\alpha(\mu)$ is called the {\em $\alpha$-Frostmann constant} of $\mu$. For more details, we refer to \cite{Mattila}.

The key estimate in proving the fractal well-definedness of the choice (\ref{canonical_representative}) is the following, which essentially follows from the arguments presented in Appendices A and C in \cite{BBCR}:
\begin{lemma} \label{N-maximal-estimate}
Let $s>0$ and $\alpha >n-2s$. Then for any $f \in H^s(\R^n)$, any $\mu \in \M^\alpha(\mathbb{B}(0,1))$ and any $t \in \R$, we have
\begin{equation*}
\left\|\sup_{N \ge 1} \left|S^N_{\psi,t}f\right|\right\|_{L^2(d\mu)} \lesssim \sqrt{c_\alpha(\mu)}\:\|f\|_{H^s(\R^n)}\:.
\end{equation*}
\end{lemma}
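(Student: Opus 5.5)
Fix $t$ and set $g:=e^{it\psi(\sqrt{-\Delta})}f$ on the Fourier side, i.e. $\what g(\xi)=e^{it\psi(\|\xi\|)}\what f(\xi)$. Since $|e^{it\psi}|=1$, we have $\|g\|_{H^s}=\|f\|_{H^s}$, and $S^N_{\psi,t}f=\Phi(N^{-1}D)g=P_N g$, the Gaussian approximation of the identity at scale $N^{-1}$ applied to $g$. Hence the statement reduces to a $t$-independent, $\psi$-independent fact: for $g\in H^s(\R^n)$, $\alpha>n-2s$ and $\mu\in\M^\alpha(\mathbb{B}(0,1))$,
\begin{equation*}
\Bigl\|\sup_{N\ge1}|P_Ng|\Bigr\|_{L^2(d\mu)}\lesssim\sqrt{c_\alpha(\mu)}\,\|g\|_{H^s}.
\end{equation*}
This is where the measurability of $\psi$ suffices; no oscillation of the phase is used.

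I would prove this by a Littlewood--Paley decomposition $g=\sum_{k\ge0}g_k$ with $\what{g_k}$ supported where $\|\xi\|\sim2^k$ (or $\|\xi\|\lesssim1$ for $k=0$). Then $\sup_N|P_Ng|\le\sum_k\sup_N|P_Ng_k|$. For a fixed $k$, $P_Ng_k$ is $g_k$ convolved with an $L^1$-normalized Gaussian. Moreover, $g_k=\check\eta_k*g_k$ with $\check\eta_k(x)=2^{kn}\check\eta(2^kx)$ Schwartz. So $\sup_N|P_Ng_k|\lesssim \sup_N |\check\eta_k|*(\text{Gaussian}_N)*|g_k|\lesssim W_k*|g_k|$, where $W_k(x)=2^{kn}(1+2^k\|x\|)^{-M}$. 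This is the standard bound: a convolution of a Gaussian with a bump at scale $2^{-k}$ is dominated by a bump at the larger of the two scales, and bumps at scale $\ge2^{-k}$ are pointwise bounded by $W_k$. It reduces matters to the fixed-frequency estimate
\begin{equation*}
\|W_k*|g_k|\|_{L^2(d\mu)}\lesssim\sqrt{c_\alpha(\mu)}\,2^{k(n-\alpha)/2}\|g_k\|_{L^2}.
\end{equation*}
To get this, I would use Cauchy--Schwarz, $(W_k*|h|)^2\le\|W_k\|_{L^1}\,(W_k*|h|^2)$, and integrate against $\mu$. This gives $\int W_k(x-y)\,d\mu(x)\lesssim c_\alpha(\mu)2^{k(n-\alpha)}$, by summing $\mu$ over dyadic balls $\mathbb{B}(y,2^{j-k})$, each of mass $\le c_\alpha(\mu)2^{(j-k)\alpha}$, against $2^{kn}2^{-jM}$, for $M>\alpha$.

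Summing over $k$ gives
\begin{equation*}
\sum_k2^{k(n-\alpha)/2}\|g_k\|_2=\sum_k2^{k(n-\alpha-2s)/2}\,2^{ks}\|g_k\|_2\lesssim\|g\|_{H^s},
\end{equation*}
by Cauchy--Schwarz in $k$, since $n-\alpha-2s<0$; this is exactly where $\alpha>n-2s$ enters. The main technical point I expect is the domination step $\sup_N|P_Ng_k|\lesssim W_k*|g_k|$ uniformly in $N$, particularly for $N\ll2^k$. There the Gaussian is wide and I must ensure the kernel bound does not lose powers of $2^k$. Using $|\check\eta_k|*G_N\lesssim W_{\min(k,\log N)}\le W_k$ pointwise (since $W_j\lesssim W_k$ for $j\le k$, up to the normalization $2^{jn}\le2^{kn}$) handles this. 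An alternative, if one prefers to follow BBCR Appendix C literally, is to write $P_N$ as a superposition of band pieces and use $|P_Ng|\le\sum_k|P_Ng_k|$ with the same kernel bounds. Finally, measurability of the supremum is fine, since the supremum is over countably many continuous functions.
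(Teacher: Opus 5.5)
Your reduction is right, and it is the step the paper leaves implicit when it defers to Appendices A and C of BBCR; the paper gives no argument of its own. For fixed $t$ you have $S^N_{\psi,t}f=\Phi_N*g$ with $\what g=e^{it\psi(\|\cdot\|)}\what f$ and $\|g\|_{H^s}=\|f\|_{H^s}$, so neither $t$ nor $\psi$ matters. Your fixed-frequency estimate $\|W_k*|g_k|\|_{L^2(d\mu)}\lesssim\sqrt{c_\alpha(\mu)}\,2^{k(n-\alpha)/2}\|g_k\|_{L^2}$ and your final summation are also correct.

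The step that fails as written is the justification of $\sup_N|P_Ng_k|\lesssim W_k*|g_k|$. The claim $W_j\lesssim W_k$ for $j\le k$ is false. At $\|x\|=2^{-j}$ you have $W_j(x)=2^{jn}2^{-M}$, while $W_k(x)\le 2^{jn}2^{-(k-j)(M-n)}$, and you need $M>n$ anyway. An $L^1$-normalized wide bump has much heavier tails than an $L^1$-normalized narrow one; ``up to the normalization $2^{jn}\le 2^{kn}$'' only compares peak heights. So once you put absolute values on $\check\eta_k$ before convolving with the Gaussian, the kernel $\sup_{N\ge1}|\check\eta_k|*\Phi_N$ is of size $\min\{2^{kn},\|x\|^{-n}\}$ on $\mathbb{B}(0,1)$ (take $N\approx\|x\|^{-1}$), not $W_k$. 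Separately, $g_k=\check\eta_k*g_k$ is false for smooth $\eta_k$; you need a fattened cutoff $\tilde\eta_k\equiv1$ on $\mathrm{supp}\,\eta_k$.

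Any of three repairs closes the argument.

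\emph{Keep the cancellation.} The kernel $\Phi_N*\check{\tilde\eta}_k$ has multiplier $e^{-\|\xi\|^2/N^2}\tilde\eta(2^{-k}\xi)$. After rescaling $\xi=2^k\zeta$, all $\zeta$-derivatives are bounded uniformly in $N\ge1$, $k\ge0$, because $\sup_{\lambda>0}\lambda^me^{-c\lambda^2}<\infty$. Integration by parts then gives $\sup_N|\Phi_N*\check{\tilde\eta}_k|\lesssim W_k$. For $N\ll 2^k$ this kernel is in fact exponentially small, the opposite of what the absolute-value bound suggests.

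\emph{Accept the lossy kernel.} Set $\widetilde W_k:=\sup_N|\check{\tilde\eta}_k|*\Phi_N$. Then $\|\widetilde W_k\|_{L^1}\lesssim k+1$ and $\sup_y\int\widetilde W_k(x-y)\,d\mu(x)\lesssim c_\alpha(\mu)\sum_{j\le k}2^{j(n-\alpha)}\lesssim(k+1)\,c_\alpha(\mu)\,2^{k(n-\alpha)}$. You lose only a factor $k+1$ per dyadic piece, and it is absorbed by $2^{k(n-\alpha-2s)/2}$ exactly because $\alpha>n-2s$ is strict.

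\emph{Avoid dyadic pieces altogether.} This route stays in the Bessel-potential language of BBCR's Appendix A. Write $g=G_s*h$ with $\|h\|_{L^2}=\|g\|_{H^s}$ and use the elementary pointwise bound $\sup_{N\ge1}\Phi_N*G_s\lesssim G_s$. Then apply the trace inequality $\|G_s*|h|\|_{L^2(d\mu)}\lesssim\sqrt{c_\alpha(\mu)}\,\|h\|_{L^2}$. This follows by duality from $G_s*G_s=G_{2s}$ and Schur's test with $\sup_x\int G_{2s}(x-y)\,d\mu(y)\lesssim c_\alpha(\mu)$, which is where $\alpha>n-2s$ enters.
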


A straight-forward consequence of Lemma \ref{N-maximal-estimate} is the following fractal Sobolev embedding for the choice (\ref{canonical_representative}), which garners the desired well-definedness on the fractal level:
\begin{corollary} \label{fractal-Sobolev-embedding}
Let $s>0$ and $\alpha >n-2s$. Then for any $f \in H^s(\R^n)$, any $\mu \in \M^\alpha(\mathbb{B}(0,1))$ and any $t \in \R$,  we have
\begin{equation*}
\left\|u(\cdot,t)\right\|_{L^2(d\mu)} \lesssim \sqrt{c_\alpha(\mu)}\:\|f\|_{H^s(\R^n)}\:.
\end{equation*}
\end{corollary}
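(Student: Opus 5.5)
The plan is to deduce Corollary \ref{fractal-Sobolev-embedding} from Lemma \ref{N-maximal-estimate} through a pointwise domination argument. The definition (\ref{canonical_representative}) gives $u(\cdot,t)$ only as a pointwise limit. So the first step is to check that this limit exists $\mu$-almost everywhere and that, wherever it exists, it is bounded by the maximal function. Once that is in hand, the estimate follows immediately.

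\textbf{Step 1 ($\mu$-a.e. existence of the limit).} Fix $t$, $\mu \in \M^\alpha(\mathbb{B}(0,1))$ and $f\in H^s(\R^n)$. Choose $g\in\Ss(\R^n)$ with $\|f-g\|_{H^s}<\epsilon$. For $g$, dominated convergence applies since $\what g\in L^1$ and $0<\Phi\le 1$ with $\Phi(N^{-1}\xi)\to 1$. This gives $S^N_{\psi,t}g(x)\to$ the continuous function (\ref{schwartz_solution}) at every $x$, uniformly. Next, consider the oscillation
\begin{equation*}
\Omega f(x):=\limsup_{N,M\to\infty}\left|S^N_{\psi,t}f(x)-S^M_{\psi,t}f(x)\right|.
\end{equation*}
It satisfies $\Omega f\le \Omega(f-g)+\Omega g = \Omega(f-g)\le 2\sup_N|S^N_{\psi,t}(f-g)|$. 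Chebyshev's inequality combined with Lemma \ref{N-maximal-estimate} then yields
\begin{equation*}
\mu\{\Omega f>\lambda\}\lesssim \lambda^{-2}c_\alpha(\mu)\,\epsilon^2 .
\end{equation*}
Letting $\epsilon\to0$ and then $\lambda\to 0$ along a sequence shows that $\Omega f=0$ $\mu$-a.e. Hence the limit in (\ref{canonical_representative}) exists $\mu$-a.e., so $u(\cdot,t)$ is a well-defined element $\mu$-a.e.

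\textbf{Step 2 (the estimate).} Wherever the limit exists, $|u(x,t)|\le \sup_{N\ge1}|S^N_{\psi,t}f(x)|$. Taking $L^2(d\mu)$ norms and applying Lemma \ref{N-maximal-estimate} gives the claim $\|u(\cdot,t)\|_{L^2(d\mu)}\lesssim \sqrt{c_\alpha(\mu)}\|f\|_{H^s}$.

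\textbf{Main obstacle.} The only delicate point is Step 1: making sense of $u(\cdot,t)$ on the support of $\mu$, which may be Lebesgue-null. The density argument above handles this, and its only nontrivial input is Lemma \ref{N-maximal-estimate}. Measurability of the supremum is unproblematic, since each $S^N_{\psi,t}f$ is continuous (its integrand is $L^1$ because of the Gaussian cutoff) and the supremum is over a countable family.
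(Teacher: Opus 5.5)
Your argument is correct and follows the paper's route: the paper states this corollary as an immediate consequence of Lemma \ref{N-maximal-estimate}, via the pointwise bound $|u(x,t)|\le \sup_{N\ge1}|S^N_{\psi,t}f(x)|$. Your Step 1 makes explicit what the paper leaves implicit, namely the $\mu$-a.e.\ existence of the limit in (\ref{canonical_representative}), which you obtain by the standard density-plus-maximal-function argument.
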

Henceforth in this article, the solution will always be represented by the canonical choice (\ref{canonical_representative}). By Appendix A in \cite{BBCR}, we also have that the choice (\ref{canonical_representative}) for the initial data $f$ also coincides up to a set of Hausdorff dimension $\le n-2s$, with the classical choice given as a Bessel potential. 

\begin{remark} \label{Zubrinic_remark}
The index $n-2s$ showing up above, is actually the critical index for the Hausdorff dimension of singular sets of Sobolev functions in $H^s(\R^n)$ for $s \in (0,n/2)$. In \cite{Zubrinic}, it was shown that the Bessel potential representation can be singular on sets with dimension smaller than this. Consequently, our problem of estimating Hausdorff dimension of divergence sets become ill-posed for $\fd_{\psi,n}(s)<n-2s$.
\end{remark}  

\section{The transference principle: proof of Theorem \ref{transference_theorem}}
\label{sec4}
\subsection*{Step 1: size estimates of divergence sets to weighted, weak, fractal maximal estimates} By Corollary \ref{fractal-Sobolev-embedding} and Remark \ref{Zubrinic_remark}, $\fd_{\psi_1,n}$ is well-defined for all $s>0$, with $\fd_{\psi_1,n}(s) \ge n-2s$ (with $\fd_{\psi_1,n}(s) = 0$ for all $s>n/2$ by the Sobolev embedding). Let $s_0>0$. Then by the definition, 
\begin{equation*}
dim_{\cH}\D_{\psi_1}(f) \le \fd_{\psi_1,n}(s_0)\:,\:\:\text{ for all } f \in H^{s_0}(\R^n)\:.
\end{equation*}  
So in particular,
\begin{equation*}
dim_{\cH}\left(\D_{\psi_1}(f) \cap \mathbb{B}(0,1)\right) \le \fd_{\psi_1,n}(s_0)\:,\:\:\text{ for all } f \in H^{s_0}(\R^n)\:.
\end{equation*} 
Thus given $\varepsilon>0$, we have for $\alpha:=\fd_{\psi_1,n}(s_0)+\varepsilon$,
\begin{equation*}
\cH^\alpha\left(\D_{\psi_1}(f) \cap \mathbb{B}(0,1)\right)=0\:,\:\:\text{ for all } f \in H^{s_0}(\R^n)\:,
\end{equation*}
where $\mathcal{H}^\alpha$ is the $\alpha$-dimensional Hausdorff measure. Then by Frostmann's lemma,
\begin{equation*}
\mu\left(\D_{\psi_1}(f) \cap \mathbb{B}(0,1)\right)=0\:,\:\:\text{ for all } \mu \in \M^\alpha(\mathbb{B}(0,1))\:,\:\:\text{ for all } f \in H^{s_0}(\R^n)\:,
\end{equation*}
where $\M^\alpha(\mathbb{B}(0,1))$ is as defined in (\ref{Frostamnn_constant}). This means that for all $\mu \in \M^\alpha(\mathbb{B}(0,1))$ and for all $f \in H^{s_0}(\R^n)$,
\begin{equation} \label{main_thm_pf_eq1}
e^{it\psi_1(\sqrt{-\Delta})} f(x) \to f(x)\:,\:\text{ as } t \to 0\:,\: \mu-\text{a.e. } x \in \mathbb{B}(0,1)\:\:.
\end{equation}
We now fix $f \in H^{s_0}(\R^n),\mu \in \M^\alpha(\mathbb{B}(0,1)),t_0 \in \I:=[-1,1]$ and set $g:=e^{it_0\psi_1(\sqrt{-\Delta})}f$. Then as $\left\{e^{it\psi_1(\sqrt{-\Delta})}\right\}_{t \in \R}$ is a group of unitary operators on $H^{s_0}(\R^n)$, we have that $g \in H^{s_0}(\R^n)$. Therefore (\ref{main_thm_pf_eq1}) implies that 
\begin{equation*}
e^{ih\psi_1(\sqrt{-\Delta})} g(x) \to g(x)\:,\:\text{ as } h \to 0\:,\: \mu-\text{a.e. } x \in \mathbb{B}(0,1)\:\:.
\end{equation*}
Now using the group property, we note that
\begin{equation*}
e^{ih\psi_1(\sqrt{-\Delta})} g= e^{ih\psi_1(\sqrt{-\Delta})} \left(e^{it_0\psi_1(\sqrt{-\Delta})}f\right)=  e^{i(t_0+h)\psi_1(\sqrt{-\Delta})} f\:,
\end{equation*}
and hence,
\begin{equation} \label{main_thm_pf_eq2}
e^{it\psi_1(\sqrt{-\Delta})} f(x) \to e^{it_0\psi_1(\sqrt{-\Delta})} f(x)\:,\:\text{ as } t \to t_0\:,\: \mu-\text{a.e. } x \in \mathbb{B}(0,1)\:\:.
\end{equation}
Next for each $q \in \Q \cap \I$, let
\begin{eqnarray*}
\mathcal{N}^{(1)}_{q,f}&:=& \left\{x \in \mathbb{B}(0,1):  \left|e^{iq\psi_1(\sqrt{-\Delta})} f(x)\right| < \infty\right\}\:,\\
\mathcal{N}^{(2)}_{q,f}&:=& \left\{x \in \mathbb{B}(0,1): \lim_{t \to q} e^{it\psi_1(\sqrt{-\Delta})} f(x) \ne e^{iq\psi_1(\sqrt{-\Delta})} f(x)\right\}\:.
\end{eqnarray*}
Then by the fixed-time fractal Sobolev embedding (Corollary \ref{fractal-Sobolev-embedding}), we have
\begin{equation*}
\mu\left(\left(\mathcal{N}^{(1)}_{q,f}\right)^c\right)=0\:,\:\:\text{ for all } q \in \Q \cap \I\:.
\end{equation*}
Also by (\ref{main_thm_pf_eq2}), it follows that
\begin{equation*}
\mu \left(\mathcal{N}^{(2)}_{q,f}\right)=0\:,\:\:\text{ for all } q \in \Q \cap \I\:.
\end{equation*}
Then setting
\begin{equation*}
\mathcal{N}_f:= \bigcup_{q \in \Q \cap \I} \left(\left(\mathcal{N}^{(1)}_{q,f}\right)^c \cup \mathcal{N}^{(2)}_{q,f}\right)\:,
\end{equation*}
we note that $\mu\left(\mathcal{N}_f\right)=0$. Also, for all $x \in \mathbb{B}(0,1) \setminus \mathcal{N}_f$ and for all $q \in \Q \cap \I$,
\begin{equation} \label{main_thm_pf_eq3}
\left|e^{iq\psi_1(\sqrt{-\Delta})} f(x)\right| < \infty \:\:\text{ and } e^{it\psi_1(\sqrt{-\Delta})} f(x) \to e^{iq\psi_1(\sqrt{-\Delta})} f(x)\:\text{ as } t \to q \:.
\end{equation}
Next, we fix $x \in \mathbb{B}(0,1) \setminus \mathcal{N}_f$ and $q \in \Q \cap \I$. By (\ref{main_thm_pf_eq3}), there exists $\nu_{q,f}(x)>0$ such that for $t \in \left(q-\nu_{q,f}(x)\:,\:q+\nu_{q,f}(x)\right)$, we have
\begin{equation*}
\left|e^{it\psi_1(\sqrt{-\Delta})} f(x) - e^{iq\psi_1(\sqrt{-\Delta})} f(x)\right| <1\:,
\end{equation*}
and hence,
\begin{equation*}
\left|e^{it\psi_1(\sqrt{-\Delta})} f(x)\right| < \left|e^{iq\psi_1(\sqrt{-\Delta})} f(x)\right|\:+\:1\:. 
\end{equation*}
Consequently, we get
\begin{equation} \label{main_thm_pf_eq4}
\sup_{t \in \left(q-\nu_{q,f}(x)\:,\:q+\nu_{q,f}(x)\right)}\left|e^{it\psi_1(\sqrt{-\Delta})} f(x)\right| < \infty\:.
\end{equation}
Now by compactness of $\I$, there exist finitely many $q_1,q_2,\dots,q_N \in \Q \cap \I$ such that
\begin{equation*}
\I \subset \bigcup_{j=1}^N \left(q_j-\nu_{q_j,f}(x)\:,\:q_j+\nu_{q_j,f}(x)\right)\:.
\end{equation*}
Hence by (\ref{main_thm_pf_eq4}), we get that the maximal function
\begin{eqnarray*}
S^*_{\psi_1}f(x):= \sup_{t \in \I} \left|e^{it\psi_1(\sqrt{-\Delta})} f(x)\right| \le \max_{1 \le j \le N}  \sup_{t \in \left(q_j-\nu_{q_j,f}(x)\:,\:q_j+\nu_{q_j,f}(x)\right)}\left|e^{it\psi_1(\sqrt{-\Delta})} f(x)\right| < \infty\:,
\end{eqnarray*}
for all $x \in \mathbb{B}(0,1) \setminus \mathcal{N}_f$. Therefore, 
\begin{equation*}
S^*_{\psi_1}f < \infty\:,\:\:\mu-\text{a.e. },\:\text{ for all } f \in H^{s_0}(\R^n)\:.
\end{equation*}
Then by Nikishin theory \cite[Proposition 1.4 and Corollary 2.7, Chapter VI]{Rubio}, there exists a $\mu$-measurable weight $w$ with $w>0$, $\mu$-a.e. on $\mathbb{B}(0,1)$ such that we have the weighted weak-$L^2$ fractal maximal estimate:
\begin{equation} \label{general_inequality}
\left\|S^*_{\psi_1}f\right\|_{L^{2,\infty}(w\:d\mu)} \lesssim \|f\|_{H^{s_0}(\R^n)}\:,\:\:\text{ for all } f \in H^{s_0}(\R^n)\:.
\end{equation}
Thus in particular, we have
\begin{equation} \label{main_thm_pf_eq5}
\left\|S^*_{\psi_1}f\right\|_{L^{2,\infty}(w\:d\mu)} \lesssim \|f\|_{H^{s_0}(\R^n)}\:,\:\:\text{ for all } f \in \Ss(\R^n)\:.
\end{equation}
By replacing $w$ with $\min\{w,1\}$, we assume that $w \le 1,\:\mu$-a.e., in (\ref{general_inequality}) and (\ref{main_thm_pf_eq5}).

\subsection*{Step 2: transference of weighted, weak, fractal maximal estimates} We now fix $f \in \Ss(\R^n)$. By the hypothesis on the phases, there exist $\Lambda>0$ and $C>0$, such that  
\begin{equation} \label{main_thm_pf_eq6}
\left|\psi_1(\|\xi\|)-\psi_2(\|\xi\|)\right| \le C\:, \text{ for all } \|\xi\| > \Lambda\:.
\end{equation}  
Next, we consider an auxiliary function $\eta \in C^\infty_c(\R^n)$ radial, non-negative such that 
\begin{equation*}
Supp(\eta) \subset \left\{\xi \in \R^n : \frac{1}{2} < \|\xi\| < 2\right\}\:\:\text{ and } \sum_{k=-\infty}^\infty \eta\left(2^{-k}\xi\right)=1\:,\:\text{ for } \xi \ne 0\:.
\end{equation*}
Let $k_o$ be the largest integer $k$ such that $k \le \log_2\lceil\Lambda\rceil+1$. Then setting,
\begin{equation*}
\eta_1(\xi):=\sum_{k=-\infty}^{k_0} \eta\left(2^{-k}\xi\right)\:,\text{ and } \eta_2(\xi):=\sum_{k=k_0+1}^\infty \eta\left(2^{-k}\xi\right)\:,
\end{equation*}
we get that
\begin{equation*}
\what{f} \equiv \what{f}\cdot\eta_1\:+\:\what{f}\cdot\eta_2\:.
\end{equation*}
Thus for $m=1,2$, there exists $f_m\in \Ss(\R^n)$ such that $\what{f}_m=\what{f}\cdot\eta_m$ and hence,
\begin{equation} \label{main_thm_pf_eq7}
\what{f} \equiv \what{f}_1 \:+\: \what{f}_2\:,
\end{equation}
with
\begin{equation*}
Supp\left(\what{f}_m\right) \subset \begin{cases}
\mathbb{B}\left(0,2^{k_0+1}\right)\:\:&\text{ if } m=1\:,\\
\R^n \setminus \mathbb{B}\left(0,2^{k_0}\right)\:\:&\text{ if } m=2\:.
\end{cases}
\end{equation*}
Thus by the quasi-triangle inequality,
\begin{equation} \label{main_thm_pf_eq8}
\left\|S^*_{\psi_2}f\right\|_{L^{2,\infty}(w\:d\mu)} \lesssim \left\|S^*_{\psi_2}f_1\right\|_{L^{2,\infty}(w\:d\mu)}\:+\: \left\|S^*_{\psi_2}f_2\right\|_{L^{2,\infty}(w\:d\mu)}\:,
\end{equation}
where we recall that $S^*_{\psi_2}f$ is the maximal function,
$$S^*_{\psi_2}f(x):= \sup_{t \in \I} \left|e^{it\psi_2(\sqrt{-\Delta})} f(x)\right|\:.$$
For small frequency, using $w \le 1,\:\mu$-a.e., the formula given by (\ref{schwartz_solution}), the Cauchy-Schwarz inequality and the Plancherel theorem, we obtain
\begin{eqnarray} \label{main_thm_pf_eq9}
\left\|S^*_{\psi_2}f_1\right\|_{L^{2,\infty}(w\:d\mu)} \le \left\|S^*_{\psi_2}f_1\right\|_{L^2(d\mu)} \lesssim_{\mu} \left\|S^*_{\psi_2}f_1\right\|_{L^\infty(d\mu)} &\lesssim_{k_0}& \|f_1\|_{L^2(\R^n)} \nonumber\\
&\le & \|f\|_{H^{s_0}(\R^n)} \:. 
\end{eqnarray}
Next, for large frequency, for each $x \in \mathbb{B}(0,1)$ and $t \in \I$, once again by the formula (\ref{schwartz_solution}), we see that  
\begin{eqnarray*}
&&\left|e^{it\psi_1(\sqrt{-\Delta})}f_2(x) - e^{it\psi_2(\sqrt{-\Delta})}f_2(x)\right| \\
&=&\left|\frac{1}{(2\pi)^n}\int_{\R^n} e^{ix\cdot \xi} \left(e^{it\psi_1(\|\xi\|)}-e^{it\psi_2(\|\xi\|)}\right)\:\what{f}_2(\xi)\:d\xi\right| \\
&=& \left|\frac{1}{(2\pi)^n}\int_{\R^n} e^{i\left(x\cdot \xi\:+\:t\psi_1(\|\xi\|)\right)} \left(1-e^{it(\psi_2(\|\xi\|)-\psi_1(\|\xi\|))}\right)\:\what{f}_2(\xi)\:d\xi\right| \\
&=& \left|\frac{1}{(2\pi)^n}\int_{\R^n} e^{i\left(x\cdot \xi\:+\:t\psi_1(\|\xi\|)\right)} \left[1- \sum_{j=0}^\infty \frac{\left\{it(\psi_2(\|\xi\|)-\psi_1(\|\xi\|))\right\}^j}{j!} \right] \:\what{f}_2(\xi)\:d\xi\right| \\
&\le& \sum_{j=1}^\infty \frac{|t|^j}{j!} \left|\frac{1}{(2\pi)^n}\int_{\R^n} e^{i\left(x\cdot \xi\:+\:t\psi_1(\|\xi\|)\right)} \left(\psi_2(\|\xi\|)-\psi_1(\|\xi\|)\right)^j\:\what{f}_2(\xi)\:d\xi\right|\:.
\end{eqnarray*}
By (\ref{main_thm_pf_eq6}) and the fact that $f_2 \in \Ss(\R^n)$ with its Fourier transform supported in $\R^n \setminus \mathbb{B}(0,2^{k_0})$, it follows that for each $j \in \N$, there exists  $g_j \in H^{s_0}(\R^n)$ such that
\begin{equation*}
\what{g}_j(\xi)= \left(\psi_2(\|\xi\|)-\psi_1(\|\xi\|)\right)^j\:\what{f}_2(\xi)\:,\:\:\text{ with } \what{g}_j \in L^1(\R^n)\:.
\end{equation*}
In this case, the canonical choice (\ref{canonical_representative}) of the solution with initial data $g_j$ exists at every point and coincides with the formula (\ref{schwartz_solution}), i.e.,
\begin{equation} \label{extra_inequality}
e^{it\psi_1(\sqrt{-\Delta})} g_j(x) = \frac{1}{(2\pi)^n}\int_{\R^n} e^{i\left(x\cdot \xi\:+\:t\psi_1(\|\xi\|)\right)} \left(\psi_2(\|\xi\|)-\psi_1(\|\xi\|)\right)^j\:\what{f}_2(\xi)\:d\xi\:.
\end{equation} 
Then using the well-known fact that the standard quasi-norm on $L^{2,\infty}(w\:d\mu)$ is equivalent to an actual norm, the Monotone Convergence Theorem and (\ref{extra_inequality}), it follows that
\begin{equation*}
\left\|\sup_{t \in \I}\left|e^{it\psi_1(\sqrt{-\Delta})}f_2 - e^{it\psi_2(\sqrt{-\Delta})}f_2\right|\right\|_{L^{2,\infty}(w\:d\mu)} \lesssim \sum_{j=1}^\infty \frac{1}{j!} \left\|S^*_{\psi_1}g_j\right\|_{L^{2,\infty}(w\:d\mu)}\:.
\end{equation*}
At this juncture, we apply the maximal estimate (\ref{general_inequality}) to obtain,
\begin{equation*}
\left\|S^*_{\psi_1}g_j\right\|_{L^{2,\infty}(w\:d\mu)} \lesssim \|g_j\|_{H^{s_0}(\R^n)}\:.
\end{equation*}
Thus we get,
\begin{equation*}
\left\|\sup_{t \in \I}\left|e^{it\psi_1(\sqrt{-\Delta})}f_2 - e^{it\psi_2(\sqrt{-\Delta})}f_2\right|\right\|_{L^{2,\infty}(w\:d\mu)} \lesssim   \sum_{j=1}^\infty \frac{1}{j!} \|g_j\|_{H^{s_0}(\R^n)}\:.
\end{equation*}
Now by the definition of $g_j$ and the hypothesis (\ref{main_thm_pf_eq6}), it follows that
\begin{equation*}
\|g_j\|_{H^{s_0}(\R^n)} \le C^j\:\|f_2\|_{H^{s_0}(\R^n)}\:,
\end{equation*}
and thus yields,
\begin{eqnarray*}
\left\|\sup_{t \in \I}\left|e^{it\psi_1(\sqrt{-\Delta})}f_2 - e^{it\psi_2(\sqrt{-\Delta})}f_2\right|\right\|_{L^{2,\infty}(w\:d\mu)} &\lesssim &  \left(\sum_{j=1}^\infty \frac{C^j}{j!}\right)\:\|f_2\|_{H^{s_0}(\R^n)} \\
&\lesssim& \|f_2\|_{H^{s_0}(\R^n)}\:.
\end{eqnarray*}
Hence by  the last inequality and (\ref{main_thm_pf_eq5}),
\begin{eqnarray} \label{main_thm_pf_eq10}
\left\|S^*_{\psi_2}f_2\right\|_{L^{2,\infty}(w\:d\mu)} &\lesssim&  \left\|\sup_{t \in \I}\left|e^{it\psi_1(\sqrt{-\Delta})}f_2 - e^{it\psi_2(\sqrt{-\Delta})}f_2\right|\right\|_{L^{2,\infty}(w\:d\mu)}+ \left\|S^*_{\psi_1}f_2\right\|_{L^{2,\infty}(w\:d\mu)} \nonumber\\
&\lesssim& \|f_2\|_{H^{s_0}(\R^n)} \nonumber\\
&\le& \|f\|_{H^{s_0}(\R^n)}\:.
\end{eqnarray}
Finally, plugging (\ref{main_thm_pf_eq9}) and (\ref{main_thm_pf_eq10}) in (\ref{main_thm_pf_eq8}), we obtain
\begin{eqnarray*}
\left\|S^*_{\psi_2}f\right\|_{L^{2,\infty}(w\:d\mu)} &\lesssim & \left\|S^*_{\psi_2}f_1\right\|_{L^{2,\infty}(w\:d\mu)}\:+\:\left\|S^*_{\psi_2}f_2\right\|_{L^{2,\infty}(w\:d\mu)} \\
&\lesssim& \|f\|_{H^{s_0}(\R^n)}\:.
\end{eqnarray*}
By density of $\Ss(\R^n)$, the above maximal estimate extends to all of $H^{s_0}(\R^n)$. Thus summarizing, we have proved that for all $s_0>0$ and $\varepsilon>0$,
\begin{equation} \label{main_thm_pf_eq11}
\left\|S^*_{\psi_2}f\right\|_{L^{2,\infty}(w\:d\mu)} \lesssim  \|f\|_{H^{s_0}(\R^n)}\:,\:\:\text{ for all } f \in H^{s_0}(\R^n)\:,
\end{equation}
where $\mu \in \M^\alpha\left(\mathbb{B}(0,1)\right)$ and $\alpha=\fd_{\psi_1,n}(s_0)+\varepsilon$.

\subsection*{Step 3: weighted, weak, fractal maximal estimates to size estimates of divergence sets} Now by the standard method of maximal functions using the maximal estimate (\ref{main_thm_pf_eq11}), density of $\Ss(\R^n)$ and the fractal Sobolev embedding for the initial data (Corollary \ref{fractal-Sobolev-embedding}), we get that for all $\mu \in \M^\alpha\left(\mathbb{B}(0,1)\right)$ and all $f \in H^{s_0}(\R^n)$,
\begin{equation*}
\mu \left\{x \in \mathbb{B}(0,1) : \displaystyle\lim_{t \to 0} e^{it\psi_2(\sqrt{-\Delta})}f(x) \ne f(x)\right\}=0\:.
\end{equation*}
Thus by Frostmann's lemma,
\begin{equation*}
\mathcal{H}^\alpha \left\{x \in \mathbb{B}(0,1) : \displaystyle\lim_{t \to 0} e^{it\psi_2(\sqrt{-\Delta})}f(x) \ne f(x)\right\}=0\:,
\end{equation*}
for all $f \in H^{s_0}(\R^n)$. By translation invariance and countable additivity of Hausdorff measure, we get that
\begin{equation*}
\mathcal{H}^\alpha\left(\D_{\psi_2}(f)\right)=\mathcal{H}^\alpha \left\{x \in \R^n : \displaystyle\lim_{t \to 0} e^{it\psi_2(\sqrt{-\Delta})}f(x) \ne f(x)\right\}=0\:,
\end{equation*}
for all $f \in H^{s_0}(\R^n)$. This implies that 
\begin{equation*} 
\fd_{\psi_2,n}(s_0) \le \alpha = \fd_{\psi_1,n}(s_0)+\varepsilon\:.
\end{equation*}
Since, the above inequality is true for any $\varepsilon>0$, we get that
\begin{equation*} 
\fd_{\psi_2,n}(s_0) \le \fd_{\psi_1,n}(s_0)\:.
\end{equation*}
By the symmetry of the argument, switching the roles of $\psi_1$ and $\psi_2$, we get the opposite inequality and thus getting the result. \qed

\section{Some applications}
\label{sec5}
We now present some applications of Theorem \ref{transference_theorem} for general classes of abstract dispersive equations that encompass several important known examples (see Table \ref{table:1}).

\begingroup

\setlength{\tabcolsep}{10pt} 
\renewcommand{\arraystretch}{1.5} 
\begin{table}[H]
\centering
\begin{tabular}{ ||p{2cm}|| p{4cm} || p{2cm} || p{4cm}|| }
 \hline
 Dispersive equation & Phase of the Fourier multiplier $\psi(\|\xi\|)$ & Dispersive equation & Phase of the Fourier multiplier $\psi(\|\xi\|)$ \\
 \hline 
  Schr\"odinger &  $\|\xi\|^2$   & Fractional Schr\"odinger & $\|\xi\|^a$,\:\:\:$a>0$   \\
 \hline
Linearized Boussinesq  & $\|\xi\| \sqrt{1+\|\xi\|^2}$  & Beam   &  $\sqrt{1+\|\xi\|^4}$ \\
  \hline
Wave & $\|\xi\|$ & Klein-Gordon &  $\sqrt{\|\xi\|^2+a}$,\:\:\:$a > 0$  \\
\hline
\end{tabular}

\medskip

\caption{Some dispersive equations and their phases}
\label{table:1}
\end{table}
\endgroup

The Schr\"odinger equation is the simplest example of the more general family of fractional Schr\"odinger equations:
\begin{equation} \label{frac_schrodinger}
\begin{cases}
	 i\frac{\partial u}{\partial t} +\left(-\Delta \right)^{\frac{a}{2}}u=0\:,\:\:\:  &(x,t) \in \R^n \times \R\:,\:\:a>0\:, \\
	u(\cdot, 0)=f\:, &\text{ on } \R^n \:.
	\end{cases}
\end{equation}
While the classical Schrödinger equation models quantum systems through the lens of continuous Gaussian processes and Brownian motion, in generality, the fractional Schr\"odinger equations provide a quantum mechanical framework for particles experiencing non-Gaussian L\'evy stochastic dynamics. 

Now depending on whether the  phase is convex or concave, i.e., $a>1$ or $a \in (0,1)$ respectively, the analysis of divergence sets differ substantially. We first focus on the {\em `asymptotically convex'} case: 
\begin{corollary} \label{corollary1}
For a dispersive equation with asymptotically convex phase $\psi$ on $\R$ i.e. 
\begin{equation*}
\psi(|\xi|)=|\xi|^a \:+\:\mathcal{O}(1)\:,\:\:|\xi| \gg 1\:,\:\:a>1\:,
\end{equation*}
we have
\begin{equation*} 
\fd_{\psi,1}(s) = \begin{cases}
1\:,\:\:& s<\frac{1}{4} \\
1-2s\:,\:\:& \frac{1}{4} \le s \le \frac{1}{2} \\
0\:,\:\:& \frac{1}{2} \le s\:. 
\end{cases}
\end{equation*}
\end{corollary}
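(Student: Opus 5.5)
The plan is to reduce Corollary \ref{corollary1} to a known one-dimensional result via Theorem \ref{transference_theorem}. Set $\psi_a(r):=r^a$ with $a>1$. By hypothesis there are $\Lambda>0$ and $C>0$ such that $|\psi(|\xi|)-|\xi|^a|\le C$ for $|\xi|>\Lambda$. This is exactly Definition \ref{defn}, so the $\psi$-dispersive equation and the fractional Schr\"odinger equation (\ref{frac_schrodinger}) with $n=1$ are of comparable oscillation. Theorem \ref{transference_theorem} then gives $\fd_{\psi,1}(s)=\fd_{\psi_a,1}(s)$ for all $s>0$. At $s=0$ the value $1$ is trivial, since the Hausdorff dimension of a subset of $\R$ is at most $1$, and the sharp lower bound needed there is inherited from small $s>0$ anyway. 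Everything therefore reduces to computing $\fd_{\psi_a,1}$ for the convex fractional Schr\"odinger propagation in one dimension.

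For the convex fractional Schr\"odinger equation on $\R$ I will quote the known sharp one-dimensional result: $\fd_{\psi_a,1}(s)=1-2s$ for $1/4\le s\le 1/2$, $=1$ for $s<1/4$, and $=0$ for $s\ge 1/2$. This is the $a>1$ analogue of (\ref{schrodinger_dim1}) proved in \cite{BBCR}. Its proof rests on the Kolmogorov--Seliverstov--Plessner argument and the oscillatory integral bound $|\int e^{i(x\xi+t|\xi|^a)}|\xi|^{-s}\ldots|\lesssim |x|^{2s-1}$, which uses only $\psi''\gtrsim |\xi|^{a-2}$ and holds for every $a>1$. Alternatively, in dimension one radial simply means even, so (\ref{BR_estimates}) of Bennett--Rogers with $n=1$ gives the upper bound directly. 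To pass from even data to general data, I split $f$ into even and odd parts and handle the odd part by the same kernel estimate.

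The lower bounds come from two sources.
\begin{itemize}
\item For $s\ge 1/4$, the bound $\fd_{\psi,1}(s)\ge 1-2s$ follows from Remark \ref{Zubrinic_remark}: \v{Z}ubrini\'c's singular Sobolev functions blow up on a set of dimension $1-2s$, while the solution at positive times stays finite there.
\item For $s<1/4$, full dimension $1$ comes from the Dahlberg--Kenig/Walther-type counterexamples for $|\xi|^a$ with $a>1$. These produce divergence on sets of positive Lebesgue measure, and translation and superposition arguments then give divergence sets of dimension $1$.
\end{itemize}

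The main obstacle is bookkeeping, not new analysis: pinning down a reference that states the full sharp one-dimensional result for all $a>1$ with general, not only even, data. The first thing I would try is to check that the BBCR one-dimensional argument goes through verbatim with $|\xi|^2$ replaced by $|\xi|^a$. The only needed input is the van der Corput/stationary phase bound above, so the transference principle does all of the remaining work.
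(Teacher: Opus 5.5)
Your proposal is correct and matches the paper's proof. The hypothesis is exactly comparable oscillation with $|\xi|^a$, so Theorem \ref{transference_theorem} reduces everything to the convex fractional Schr\"odinger propagation on $\R$. For that equation the paper simply quotes Sj\"olin for $s<1/4$, Barcel\'o--Bennett--Carbery--Rogers for $1/4\le s\le 1/2$ (already stated for all $a>1$ and general, not just even, data), and Sobolev embedding for $s>1/2$. This makes your ``bookkeeping obstacle'' and the Bennett--Rogers/parity detour unnecessary.
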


\begin{remark} \label{recovering_results_beam}
\begin{enumerate}
\item Apart from the fractional Schr\"odinger equations with convex phases, the class of dispersive equations in Corollary \ref{corollary1} also contains the known examples: the linearized Boussinesq and the Beam equation, also known as the fourth order wave equation (see Table \ref{table:1}). The Beam equation physically models how flexural stiffness drives structural bending, causing shorter mechanical waves to travel faster than longer ones.    
\item Corollary \ref{corollary1} recovers the results for the linearized Boussinesq \cite[Theorem 1.3]{LiLi} and the Beam propagations \cite[Theorem 1]{LLN} by means of a simple one-line proof, compared to the oscillatory integral estimates involving higher derivatives in \cite{LiLi,LLN}.
\end{enumerate}
\end{remark}

The graph of $\fd_{\psi,1}$ for asymptotically convex phase $\psi$ is presented in red in Figure \ref{figure3}. 

\begin{figure}[htbp]
  \centering
  
  \begin{minipage}{0.48\textwidth}
    \centering
    \begin{tikzpicture}
	\draw [->, thick] (0,0) -- (2,0) node [below left] {$s$};
	\draw [->, thick] (0,0) -- (0,3)  node [below left] {$\fd_{\psi,1}(s)$};
	\draw[red] [-, thick] (0,2) -- (0.5,2);
	\draw[red][thick] plot[domain=0.5:1, smooth] (\x,{2-2*\x});
	\draw[red] [-, thick] (1,0) -- (2,0);
	\draw [fill] (0.5,0) circle [radius=.05] node [below] {\scriptsize$\frac{1}{4}$};
	\draw [fill] (0,2) circle [radius=.05] node [left] {\scriptsize$1$};
	\draw [fill] (0,1) circle [radius=.05] node [left] {\scriptsize$\frac{1}{2}$};
	\draw [fill] (1,0) circle [radius=.05] node [below] {\scriptsize$\frac{1}{2}$};
	\draw[gray] [-, dashed] (0.5,0) -- (0.5,2);
	\draw[gray] [-, dashed] (0,1) -- (0.5,1);
\end{tikzpicture}
    \caption{$\fd_{\psi,1}$ for asymptotically convex phase $\psi$}
    \label{figure3}
  \end{minipage}
  \hfill 
  \begin{minipage}{0.48\textwidth}
    \centering
    \begin{tikzpicture}
	\draw [->, thick] (0,0) -- (2,0) node [below left] {$s$};
	\draw [->, thick] (0,0) -- (0,3)  node [below left] {$\fd_{\psi,1}(s)$};
	\draw[red] [-, thick] (0,2) -- (0.25,2);
	\draw[cyan][thick] plot[domain=0.25:0.5, smooth] (\x,{3-4*\x});
	\draw[green][thick] plot[domain=0.25:0.5, smooth] (\x,{2-2*\x});
	\draw[red][thick] plot[domain=0.5:1, smooth] (\x,{2-2*\x});
	\draw[red] [-, thick] (1,0) -- (2,0);
	\draw [fill] (0.25,0) circle [radius=.05] node [below] {\scriptsize$\frac{a}{4}$};
	\draw [fill] (0.5,0) circle [radius=.05] node [below] {\scriptsize$\frac{1}{4}$};
	\draw [fill] (0,2) circle [radius=.05] node [left] {\scriptsize$1$};
	\draw [fill] (0,1) circle [radius=.05] node [left] {\scriptsize$\frac{1}{2}$};
	\draw [fill] (1,0) circle [radius=.05] node [below] {\scriptsize$\frac{1}{2}$};
	\draw [fill] (0,1.5) circle [radius=.05] node [left] {\scriptsize$1-\frac{a}{2}$};
	\draw[gray] [-, dashed] (0.25,0) -- (0.25,2);
	\draw[gray] [-, dashed] (0,1.5) -- (0.25,1.5);
	\draw[gray] [-, dashed] (0.5,0) -- (0.5,1);
	\draw[red] [-, thick] (1.5,2) -- (1.7,2);
	\pgftext[at={\pgfpoint{3cm}{2cm}}]{\textcolor{red}{sharp result}};
	\draw[cyan] [-, thick] (1.5,1.5) -- (1.7,1.5);
	\draw[gray] [-, dashed] (0,1) -- (0.5,1);
	\pgftext[at={\pgfpoint{3cm}{1.5cm}}]{\textcolor{cyan}{upper bound}};	
	\draw[green] [-, thick] (1.5,1) -- (1.7,1);
	\pgftext[at={\pgfpoint{3cm}{1cm}}]{\textcolor{green}{lower bound}};	
\end{tikzpicture}
    \caption{$\fd_{\psi,1}$ for asymptotically concave phase $\psi$}
    \label{figure4}
  \end{minipage}
\end{figure}

We now present the proof of Corollary \ref{corollary1}:
\begin{proof}[Proof of Corollary \ref{corollary1}]
In the case of the fractional Schr\"odinger equations with convex phase, in dimension one, Sj\"olin proved that $\fd_{\psi,1}(s) =1$ for $s < 1/4$ \cite{Sjolin}. Then Barcel\'o et. al. established that $\fd_{\psi,1}(s) = 1-2s$  for $1/4 \le s \le 1/2$, in \cite{BBCR}. The result for $s>1/2$ follows simply by Sobolev embedding.

The result for dispersive equations with asymptotically convex phase, in Corollary \ref{corollary1}, now follows by Theorem \ref{transference_theorem} and the above mentioned results for the fractional Schr\"odinger equations.
\end{proof}

 Next, we shift to {\em `asymptotically concave phases'}:
\begin{corollary} \label{corollary2}
For a dispersive equation with asymptotically concave phase $\psi$ on $\R$ i.e. 
\begin{equation*}
\psi(|\xi|)=|\xi|^a \:+\:\mathcal{O}(1)\:,\:\:|\xi| \gg 1\:,\:\:a \in (0,1)\:,
\end{equation*}
we have
\begin{enumerate}
\item the sharp result in the range,
\begin{equation*}
\fd_{\psi,1}(s) =\begin{cases}
1\:,\:\: &s<\frac{a}{4}\\
1-2s\:,\:\: &\frac{1}{4} \le s \le \frac{1}{2}\\
0\:,\:\: &\frac{1}{2} \le s\:;
\end{cases}
\end{equation*}
\item and in the remaining range, \\
the upper bound,
\begin{equation*}
\fd_{\psi,1}(s) \le \frac{1}{2}+\frac{1-4s}{2(1-a)}\:,\:\: \frac{a}{4} \le s \le \frac{1}{4}\:,
\end{equation*}
and the lower bound,
\begin{equation*}
\fd_{\psi,1}(s) \ge 1-2s\:,\:\: \frac{a}{4} \le s \le \frac{1}{4}\:;
\end{equation*}
\end{enumerate}
\end{corollary}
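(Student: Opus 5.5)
The plan mirrors the proof of Corollary \ref{corollary1}: reduce, via Theorem \ref{transference_theorem}, to the model fractional Schr\"odinger phase $\psi_a(|\xi|)=|\xi|^a$ with $a\in(0,1)$ on $\R$, and then import the known one-dimensional results for that model. First I check that the hypothesis $\psi(|\xi|)=|\xi|^a+\mathcal{O}(1)$ for $|\xi|\gg1$ means exactly that there are $\Lambda,C>0$ with $|\psi(|\xi|)-\psi_a(|\xi|)|\le C$ for $|\xi|>\Lambda$. So the equations with phases $\psi$ and $\psi_a$ are of comparable oscillation in the sense of Definition \ref{defn}, and Theorem \ref{transference_theorem} gives $\fd_{\psi,1}(s)=\fd_{\psi_a,1}(s)$ for every $s>0$. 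Every assertion of the corollary is therefore a statement about $\fd_{\psi_a,1}$ alone.

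For the model concave phase I then collect the one-dimensional facts range by range.
\begin{itemize}
\item For $s<a/4$: the Sj\"olin/Walther counterexamples show that pointwise convergence fails on sets of full dimension, so $\fd_{\psi_a,1}(s)=1$.
\item For $a/4\le s\le 1/4$: the upper bound $\frac12+\frac{1-4s}{2(1-a)}$ should be the Barcel\'o--Bennett--Carbery--Rogers bound for concave phases in dimension one. It comes from the Kolmogorov--Seliverstov--Plessner/$TT^*$ method with the oscillatory kernel estimate for $|\xi|^a$, interpolating between the maximal estimate at $s=a/4$ and at $s=1/4$ with fractal measures of dimension $\alpha$. As a sanity check, the formula gives $1-a/2$ at $s=a/4$ and $1/2$ at $s=1/4$, matching Figure \ref{figure4}.
\item For $1/4\le s\le 1/2$: the upper bound $1-2s$ also comes from \cite{BBCR}, whose argument in that range uses only a $|\xi|^{-1/2}$-type kernel bound valid for all $a\neq1$.
\item For $s\ge1/2$: the value $0$ follows from Sobolev embedding (for $s>1/2$) together with the $1-2s$ bound at the endpoint.
\item Lower bounds: the bound $\fd_{\psi_a,1}(s)\ge 1-2s$ on $[a/4,1/2]$ is the general fact recorded in Step 1 of Section \ref{sec4} via Remark \ref{Zubrinic_remark}, which uses \v{Z}ubrini\'c's singular sets. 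Combined with the upper bound, it gives sharpness on $[1/4,1/2]$.
\end{itemize}

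Assembling these pieces and transferring them through $\fd_{\psi,1}=\fd_{\psi_a,1}$ yields the corollary.

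The main obstacle is purely bibliographic and technical. I must make sure the cited results for $|\xi|^a$ with $a\in(0,1)$ really are stated in exactly this form, including the endpoints $s=1/4$ and $s=1/2$. The point that needs care is that \cite{BBCR} treats general $a\neq1$, so that its $1-2s$ bound and the interpolated concave-range bound both apply. If the upper bound on $[a/4,1/4]$ is not explicitly in the literature, I would derive it by the standard Frostman-measure argument: use the kernel bound $\left|\int e^{i(x\xi+t|\xi|^a)}|\xi|^{-2s}\,d\xi\right|\lesssim |x|^{-\beta(s)}$, with $\beta(s)$ determined by stationary phase for the concave phase, and pair it with the energy $I_\gamma(\mu)$ of an $\alpha$-dimensional measure.
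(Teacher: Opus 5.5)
Your route is the paper's: you observe comparable oscillation with $|\xi|^a$, apply Theorem \ref{transference_theorem}, and import the one-dimensional facts for the model concave phase. These are Walther \cite{Walther1} for $s<a/4$, \v{Z}ubrini\'c \cite{Zubrinic} for the lower bound $1-2s$, and Sobolev embedding for $s>1/2$.

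What needs correcting is the source of the upper bounds. The paper invokes \cite{BBCR} only for $a>1$ and for the wave equation. It takes both concave-phase bounds from Cho--Shiraki \cite{CS}: $\fd_{\psi_a,1}(s)\le\frac12+\frac{1-4s}{2(1-a)}$ on $[a/4,1/4]$ and $\fd_{\psi_a,1}(s)\le 1-2s$ on $[1/4,1/2]$. Your assertion that \cite{BBCR} covers all $a\neq1$ should therefore not be relied on.

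Your sanity check also has a slip. At $s=a/4$ the upper bound equals $\frac12+\frac12=1$, which is consistent with $\fd_{\psi,1}=1$ for $s<a/4$. The value $1-\frac a2$ marked in Figure \ref{figure4} belongs to the lower bound $1-2s$, not the upper bound.

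Your fallback does close the argument, and it gives both upper bounds at once. Take $|x|\le 2$ and $|t|\le 1$.
\begin{itemize}
\item The phase $x\xi+t|\xi|^a$ is stationary at $|\xi|\sim(|t|/|x|)^{1/(1-a)}$.
\item On the region $|\xi|\lesssim|x|^{-1}$ the trivial bound gives $|x|^{2s-1}$, and the stationary point lies outside this region only when $|t|\gtrsim|x|^a$.
\item There, stationary phase gives $|t|^{\frac{1-4s}{2(1-a)}}|x|^{-\frac{2-a-4s}{2(1-a)}}$.
\item For $s\le 1/4$, use $|t|\le1$ to bound this by $|x|^{-\frac{2-a-4s}{2(1-a)}}$. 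For $s>1/4$, use $|t|\gtrsim|x|^a$ to bound it by $|x|^{2s-1}$.
\end{itemize}
Hence $\beta(s)=\max\{1-2s,\frac{2-a-4s}{2(1-a)}\}$. Since $\frac{2-a-4s}{2(1-a)}=\frac12+\frac{1-4s}{2(1-a)}$, this term exceeds $1-2s$ exactly when $s<1/4$, and it is below $1$ exactly when $s>a/4$. So the energy argument with $\beta(s)<\alpha$ reproduces the stated bounds.

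The same estimate is effectively proved in the paper. By duality, the bound $\|Ug\|_{L^2(\R)}\lesssim\|g\|_{L^\infty(d\mu_0)}$ in the proof of Theorem \ref{radial_concave_thm} is the one-dimensional linearized maximal estimate for $e^{it(-\Delta)^{a/2}}$ against $\alpha_0$-dimensional measures on an interval. It holds for $\alpha_0>\max\{1-2s,\frac{2-a-4s}{2(1-a)}\}$.
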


The graph of $\fd_{\psi,1}$ for asymptotically concave phase $\psi$ is presented in Figure \ref{figure4}. 

\begin{proof}[Proof of Corollary \ref{corollary2}]
In the case of the fractional Schr\"odinger equations with concave phase i.e. $a \in (0,1)$, in dimension one, Walther proved that $\fd_{\psi,1}(s)=1$ for $s<a/4$ \cite{Walther1}.  Then in the remaining range, Cho-Shiraki recently established that
\begin{equation*} 
\fd_{\psi,1}(s) \le \begin{cases}
 \frac{1}{2}+\frac{1-4s}{2(1-a)}\:,\:\:& \frac{a}{4} \le s \le \frac{1}{4} \\
 1-2s\:,\:\:& \frac{1}{4} \le s \le \frac{1}{2}\:, 
\end{cases}
\end{equation*}
in \cite{CS}. Their bound is sharp in the range $s \in [1/4,1/2]$, by the singularity of Sobolev functions \cite{Zubrinic}, which for the range $s \in [a/4,1/4]$ also yields the lower bound $\fd_{\psi,1}(s) \ge 1-2s$.  The result for $s>1/2$ follows simply by Sobolev embedding.  
 
The result for dispersive equations with asymptotically concave phase, with $a \in (0,1)$, in Corollary \ref{corollary2}, now follows by Theorem \ref{transference_theorem} and the above mentioned results for the fractional Schr\"odinger equations.
\end{proof}

As mentioned in the Introduction,  a complete description of $\fd_{\psi,n}(s)$ for the Schr\"odinger propagation in higher dimensions $n \ge 2$, is currently not known.  By setting the following notation, for $s>0,\:n \in \N$ and $m \in \N \cup \{0\}$,
\begin{eqnarray*}
\alpha^{(1)}_m(s)&:=& n-\frac{2(n-m+1)s-n}{n-m-1}\:, \\
\alpha^{(2)}_m(s)&:=& n+1-\frac{2(n-m+1)s}{n-m}\:,\\
\alpha^{(3)}_m(s)&:=& n - \frac{(n-m-1)(2s-1)}{n-m-2}\:,\\
\mathfrak{s}^{(1)}_m &:=& \frac{(m+1)(n-m)}{2(n-m+1)}\:,\\
\mathfrak{s}^{(2)}_m &:=& \frac{m(n-m)-2m-1}{2(n-m-3)}\:,
\end{eqnarray*}
we generalize the current best known results for the Schr\"odinger propagation to general dispersive equations of {\em `asymptotically quadratic phase'}: 

\begin{corollary} \label{corollary3}
For a dispersive equation with asymptotically quadratic phase $\psi$ on $\R^n,\:n \ge 2$ i.e. 
\begin{equation*}
\psi(\|\xi\|)=\|\xi\|^2 \:+\:\mathcal{O}(1)\:,\:\:\|\xi\| \gg 1\:,
\end{equation*}
we have,
\begin{enumerate}
\item the sharp result for the following range:
\begin{equation*} 
\fd_{\psi,n}(s)  =\begin{cases}
n\:,\:\:& s<\frac{n}{2(n+1)} \\
 n-2s\:,\:\:& \frac{n}{4} \le s \le \frac{n}{2} \\
0\:,\:\:& \frac{n}{2} \le s\:; 
\end{cases}
\end{equation*}
\item in the remaining range, 
\begin{enumerate}
\item the upper bound,
\begin{equation*}
\fd_{\psi,n}(s)  \le n+1-\frac{2(n+1)s}{n}\:,\:\: \frac{n}{2(n+1)} \le s < \frac{n}{4}\:;
\end{equation*}
\item the lower bounds (with $m_0=\lfloor (n-1)/3\rfloor$ and $m_1=\lfloor (n/2)-1\rfloor$),
\begin{enumerate}
\item for dimensions $n=2,3$,
\begin{equation*}
\fd_{\psi,n}(s)  \ge \alpha^{(1)}_0(s)\:,\:\:\frac{n}{2(n+1)} \le s \le \frac{n}{4}\:;
\end{equation*}
\item for dimensions $n=4,5,6,7$,
\begin{equation*}
\fd_{\psi,n}(s)  \ge \begin{cases}
\max\left\{\alpha^{(1)}_{m-1}(s)\:,\:\alpha^{(2)}_m(s)\right\}\:,\:\:& \mathfrak{s}^{(1)}_{m-1} \le s \le \mathfrak{s}^{(1)}_{m}\:,\:\: 1\le m \le m_0  \\
\alpha^{(1)}_{m_0}(s)\:,\:\:& \mathfrak{s}^{(1)}_{m_0} \le s \le \frac{n}{4}\:; 
\end{cases}
\end{equation*}
\item for dimensions $n=8,9,10,11,13$,
\begin{equation*}
\fd_{\psi,n}(s)  \ge \begin{cases}
\max\left\{\alpha^{(1)}_{m-1}(s)\:,\:\alpha^{(2)}_m(s)\right\}\:,\:\:& \mathfrak{s}^{(1)}_{m-1} \le s \le \mathfrak{s}^{(1)}_{m}\:,\:\: 1\le m \le m_0  \\
\max\left\{\alpha^{(1)}_{m_0}(s)\:,\:\alpha^{(3)}_{m_0+1}(s)\right\}\:,\:\:& \mathfrak{s}^{(1)}_{m_0} \le s \le \mathfrak{s}^{(2)}_{m_0+1}  \\
\alpha^{(1)}_{m_0+1}(s)\:,\:\:& \mathfrak{s}^{(2)}_{m_0+1} \le s \le \frac{n}{4}\:; 
\end{cases}
\end{equation*}
\item for dimensions $n\ge 12$, even,
\begin{equation*}
\fd_{\psi,n}(s)  \ge \begin{cases}
\max\left\{\alpha^{(1)}_{m-1}(s)\:,\:\alpha^{(2)}_m(s)\right\}\:,\:\:& \mathfrak{s}^{(1)}_{m-1} \le s \le \mathfrak{s}^{(1)}_{m}\:,\:\: 1\le m \le m_0  \\
\max\left\{\alpha^{(1)}_{m_0}(s)\:,\:\alpha^{(3)}_{m_0+1}(s)\right\}\:,\:\:& \mathfrak{s}^{(1)}_{m_0} \le s \le \mathfrak{s}^{(2)}_{m_0+1}  \\
\max\left\{\alpha^{(1)}_{m-1}(s)\:,\:\alpha^{(3)}_m(s)\right\}\:,\:\:& \mathfrak{s}^{(2)}_{m-1} \le s \le \mathfrak{s}^{(2)}_{m}\:,\:\: m_0+2\le m \le m_1\:; 
\end{cases}
\end{equation*}
\item for dimensions $n\ge 15$, odd,
\begin{equation*}
\fd_{\psi,n}(s)  \ge \begin{cases}
\max\left\{\alpha^{(1)}_{m-1}(s)\:,\:\alpha^{(2)}_m(s)\right\}\:,\:\:& \mathfrak{s}^{(1)}_{m-1} \le s \le \mathfrak{s}^{(1)}_{m}\:,\:\: 1\le m \le m_0  \\
\max\left\{\alpha^{(1)}_{m_0}(s)\:,\:\alpha^{(3)}_{m_0+1}(s)\right\}\:,\:\:& \mathfrak{s}^{(1)}_{m_0} \le s \le \mathfrak{s}^{(2)}_{m_0+1}  \\
\max\left\{\alpha^{(1)}_{m-1}(s)\:,\:\alpha^{(3)}_m(s)\right\}\:,\:\:& \mathfrak{s}^{(2)}_{m-1} \le s \le \mathfrak{s}^{(2)}_{m}\:,\:\: m_0+2\le m \le m_1\\
\alpha^{(1)}_{m_1}(s)\:,\:\:& \mathfrak{s}^{(2)}_{m_1} \le s \le \frac{n}{4}\:.
\end{cases}
\end{equation*}
\end{enumerate}
\end{enumerate} 
\end{enumerate}
\end{corollary}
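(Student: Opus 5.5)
The plan is the same two-step scheme used for Corollaries \ref{corollary1} and \ref{corollary2}. First, collect the known bounds for the model phase $\psi_0(\|\xi\|)=\|\xi\|^2$, that is, the Schr\"odinger propagation on $\R^n$, $n \ge 2$. Second, transfer them to $\psi$ via Theorem \ref{transference_theorem}. The hypothesis $\psi(\|\xi\|)=\|\xi\|^2+\mathcal{O}(1)$ for $\|\xi\|\gg1$ says exactly that there are $\Lambda, C>0$ with $|\psi(\|\xi\|)-\|\xi\|^2|\le C$ for $\|\xi\|>\Lambda$. So $\psi$ and $\psi_0$ are of comparable oscillation in the sense of Definition \ref{defn}, and Theorem \ref{transference_theorem} gives $\fd_{\psi,n}(s)=\fd_{\psi_0,n}(s)$ for every $s>0$. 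Every upper and lower bound, and every equality, for $\fd_{\psi_0,n}$ therefore transfers verbatim to $\fd_{\psi,n}$. The whole proof reduces to citing the Schr\"odinger results for each range.

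For part (1), I would assemble the sharp ranges as follows.
\begin{itemize}
\item For $s<n/2(n+1)$, Bourgain's counterexample \cite{Bourgain} gives $\fd_{\psi_0,n}(s)=n$.
\item For $n/4\le s\le n/2$, the Kolmogorov--Seliverstov--Plessner argument of Barcel\'o et al.\ \cite{BBCR} gives the upper bound $n-2s$. The matching lower bound comes from \v{Z}ubrini\'c's singular sets \cite{Zubrinic}, as recorded in Remark \ref{Zubrinic_remark}.
\item For $s\ge n/2$, one gets $0$: for $s>n/2$ by Sobolev embedding, and at $s=n/2$ because $n-2s=0$.
\end{itemize}

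For part (2)(a), the upper bound $n+1-2(n+1)s/n$ on $[n/2(n+1),n/4)$ is that of Du et al.\ \cite{DGL,DZ}. For part (2)(b), the case-by-case lower bounds, with the notation $\alpha^{(i)}_m$ and $\mathfrak{s}^{(i)}_m$, are the current best lower bounds of Eceizabarrena--Ponce-Vanegas \cite{EP}. These combine the fractal Bourgain example of \cite{LP} with the intermediate-space trick of \cite{DKWZ}.

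The only real obstacle is bookkeeping. I need to check that the piecewise formulas, the thresholds $m_0=\lfloor (n-1)/3\rfloor$ and $m_1=\lfloor n/2\rfloor-1$, and the dimension splits ($n=2,3$; $4$--$7$; $8$--$11,13$; even $n\ge12$; odd $n\ge15$) agree exactly with the statement in \cite{EP}, including how the endpoints are included. I also need to make sure these lower bounds are stated as bounds on $\fd_{\psi_0,n}$ with the canonical representative \eqref{canonical_representative}. That compatibility is guaranteed by the set-up of Section \ref{sec3}, which follows \cite{BBCR} as \cite{EP} does. Once that is verified, the corollary follows immediately from Theorem \ref{transference_theorem}; no new analysis is required.
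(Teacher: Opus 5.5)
Your proposal is correct and is essentially the paper's proof. The paper likewise collects the known results for the Schr\"odinger phase $\|\xi\|^2$ (Bourgain, Barcel\'o et al.\ and Sobolev embedding for the sharp ranges, Du et al.\ for the upper bound, Eceizabarrena--Ponce-Vanegas for the lower bounds), and transfers them to $\psi$ via Theorem~\ref{transference_theorem}, since the hypothesis $\psi(\|\xi\|)=\|\xi\|^2+\mathcal{O}(1)$ is exactly comparable oscillation; the bookkeeping checks you flag are left implicit there too.
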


\begin{remark} \label{new_results_beam}
As special cases, Corollary \ref{corollary3} provides new results for the linearized Boussinesq and the Beam propagations in higher dimensions.
\end{remark}

The graphs of $\fd_{\psi,n}$ for asymptotically quadratic phase $\psi$ are presented in Figure \ref{figure5} for dimensions $n=2,4$.

\begin{figure}[htbp]
  \centering
  
  \begin{subfigure}{0.48\textwidth}
    \centering
    \begin{tikzpicture}
\draw [->, thick] (0,0) -- (4,0) node [below left] {$s$};
\draw [->, thick] (0,0) -- (0,7)  node [below left] {$\fd_{\psi,2}(s)$};
\draw[red] [-, thick] (0,6) -- (1,6);
\draw[red][thick] plot[domain=1.5:3, smooth] (\x,{6-2*\x});
\draw[red] [-, thick] (3,0) -- (4,0);
\draw[cyan][thick]plot[domain=1:1.5, smooth] (\x,{9-3*\x});
\draw[green][thick]plot[domain=1:1.5, smooth] (\x,{12-6*\x});
\draw [fill] (1,0) circle [radius=.05] node [below] {\scriptsize$\frac{1}{3}$};
\draw [fill] (0,3) circle [radius=.05] node [left] {\scriptsize$1$};
\draw [fill] (3,0) circle [radius=.05] node [below] {\scriptsize$1$};
\draw [fill] (0,4.5) circle [radius=.05] node [left] {\scriptsize$\frac{3}{2}$};
\draw [fill] (0,6) circle [radius=.05] node [left] {\scriptsize$2$};
\draw [fill] (1.5,0) circle [radius=.05] node [below] {\scriptsize$\frac{1}{2}$};
\draw[gray] [-, dashed] (1,0) -- (1,6);
\draw[gray] [-, dashed] (1.5,0) -- (1.5,4.5);
\draw[gray] [-, dashed] (0,3) -- (1.5,3);
\draw[gray] [-, dashed] (0,4.5) -- (1.5,4.5);
\draw[red] [-, thick] (3,6) -- (3.2,6);
\pgftext[at={\pgfpoint{4.5cm}{6cm}}]{\textcolor{red}{sharp result}};
\draw[cyan] [-, thick] (3,5.5) -- (3.2,5.5);
\pgftext[at={\pgfpoint{4.6cm}{5.5cm}}]{\textcolor{cyan}{upper bound}};	
\draw[green] [-, thick] (3,5) -- (3.2,5);
\pgftext[at={\pgfpoint{4.5cm}{5cm}}]{\textcolor{green}{lower bound}};
\end{tikzpicture}
    \caption{dimension $2$}
    \label{figure5.1}
  \end{subfigure}
  \hfill 
  \begin{subfigure}{0.48\textwidth}
    \centering
    \begin{tikzpicture}
\draw [->, thick] (0,0) -- (5,0) node [below left] {$s$};
\draw [->, thick] (0,0) -- (0,9)  node [below left] {$\fd_{\psi,4}(s)$};
\draw[red] [-, thick] (0,8) -- (0.8,8);
\draw[red][thick] plot[domain=2:4, smooth] (\x,{8-2*\x});
\draw[red] [-, thick] (4,0) -- (5,0);
\draw[cyan][thick]plot[domain=0.8:2, smooth] (\x,{10-2.5*\x});
\draw[green][thick]plot[domain=0.8:1, smooth] (\x,{10.666-3.333*\x});
\draw[green][thick]plot[domain=1:1.5, smooth] (\x,{10-2.666*\x});
\draw[green][thick]plot[domain=1.5:2, smooth] (\x,{12-4*\x});
\draw [fill] (0.8,0) circle [radius=.05] node [below] {\scriptsize$\frac{2}{5}$};
\draw[gray] [-, dashed] (0.8,0) -- (0.8,8);
\draw [fill] (4,0) circle [radius=.05] node [below] {\scriptsize$2$};
\draw [fill] (0,4) circle [radius=.05] node [left] {\scriptsize$2$};
\draw [fill] (2,0) circle [radius=.05] node [below] {\scriptsize$1$};
\draw [fill] (0,5) circle [radius=.05] node [left] {\scriptsize$\frac{5}{2}$};
\draw[gray] [-, dashed] (2,0) -- (2,5);
\draw[gray] [-, dashed] (0,5) -- (2,5);
\draw[gray] [-, dashed] (0,4) -- (2,4);
\draw[red] [-, thick] (3,6) -- (3.2,6);
\pgftext[at={\pgfpoint{4.5cm}{6cm}}]{\textcolor{red}{sharp result}};
\draw[cyan] [-, thick] (3,5.5) -- (3.2,5.5);
\pgftext[at={\pgfpoint{4.6cm}{5.5cm}}]{\textcolor{cyan}{upper bound}};	
\draw[green] [-, thick] (3,5) -- (3.2,5);
\pgftext[at={\pgfpoint{4.5cm}{5cm}}]{\textcolor{green}{lower bound}};
\end{tikzpicture}
    \caption{dimension $4$}
    \label{figure5.2}
  \end{subfigure}
  \caption{$\fd_{\psi,n}$ for asymptotically quadratic phase $\psi$}
  \label{figure5}
\end{figure}

We now present the proof of Corollary \ref{corollary3}:
\begin{proof}[Proof of Corollary \ref{corollary3}]
For the Schr\"odinger propagation in dimensions $n \ge 2$,
\begin{itemize}
\item as mentioned in the Introduction, the sharp results are known in the range $s \in (0,n/2(n+1)) \cup [n/4,\infty)$ and are due to Bourgain \cite{Bourgain}, Barcel\'o et. al. \cite{BBCR} and the Sobolev embedding;
\item in the remaining range, the upper bounds and lower bounds mentioned in part (2) of Corollary \ref{corollary3} are due to Du et. al. \cite{DGL,DZ} and Eceizabarrena et. al. \cite{EP} respectively.
\end{itemize}

The result for dispersive equations with asymptotically quadratic phase, in Corollary \ref{corollary3}, now follows by Theorem \ref{transference_theorem} and the above mentioned results for the Schr\"odinger equation.
\end{proof}

We now focus on the case complementary to both convex ($a>1$) and concave ($a \in (0,1)$) phases, namely the Wave operator or the half-Schr\"odinger, corresponding to $a=1$ in (\ref{frac_schrodinger}). It is related to the Cauchy problem:
\begin{equation*}
\frac{\partial^2 u}{\partial t^2} - \Delta u=0,\:\:u(\cdot,0)=u_0,\:\: \frac{\partial u}{\partial t}(\cdot, 0)=u_1\:,
\end{equation*}
where $(u_0,u_1) \in H^s(\R^n) \times H^{s-1}(\R^n)$, and represents the dynamic mechanism by which energy, information, and physical disturbances travel through space and time without the permanent transport of matter.

For `wave-like' dispersive equations, we have
\begin{corollary} \label{corollary4}
For a dispersive equation with asymptotically linear phase $\psi$ i.e. 
\begin{equation*}
\psi(\|\xi\|)=\|\xi\| \:+\:\mathcal{O}(1)\:,\:\:\|\xi\| \gg 1\:,
\end{equation*}
we have,
\begin{enumerate}
\item for $n \ge 1$, the sharp result for the following range:
\begin{equation*}
\fd_{\psi,n}(s) = \begin{cases}
n, \:\:\:&s \le \frac{1}{2}\\
0, \:\:\:&s>\frac{n}{2}\:;
\end{cases}
\end{equation*}
\item in the remaining range, 
\begin{enumerate}
\item for dimensions $n=2,3$, the sharp results:
\begin{equation*}
\fd_{\psi,2}(s) = \begin{cases}
4-4s, \:\:\:&\frac{1}{2} \le s <\frac{3}{4}\\
2-2s, \:\:\:&\frac{3}{4}< s \le 1\:;
\end{cases}
\text{ and } \fd_{\psi,3}(s) = \begin{cases}
5-4s, \:\:\:&\frac{1}{2} \le s \le 1\\
3-2s, \:\:\:&1 \le s \le \frac{3}{2}\:;
\end{cases}
\end{equation*}
\item for dimensions $n \ge 4$, 
\begin{enumerate}
\item  the sharp result in the range,
\begin{equation*}
\fd_{\psi,n}(s) = n-2s, \:\:\:\frac{n+1}{4} \le s \le \frac{n}{2}\:;
\end{equation*}
\item the upper bound,
\begin{equation*}
\fd_{\psi,n}(s) \le \begin{cases}
\frac{n^2-n-1}{n-2}-\frac{2(n-1)s}{n-2}, \:\:\:&\frac{1}{2} < s \le \frac{n}{4}\\
\frac{3n+1}{2}-4s, \:\:\:&\frac{n}{4} \le s \le \frac{n+1}{4}\:;
\end{cases}
\end{equation*}
\item the lower bound,
\begin{equation*}
\fd_{\psi,n}(s) \ge \begin{cases}
n+2-4s, \:\:\:&\frac{1}{2} < s \le 1\\
n-2s, \:\:\:&1 \le s \le \frac{n+1}{4}\:.
\end{cases}
\end{equation*}
\end{enumerate}
\end{enumerate}
\end{enumerate}
\end{corollary}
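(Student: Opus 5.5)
The plan follows the pattern of Corollaries \ref{corollary1}--\ref{corollary3}. Since $\psi(\|\xi\|)=\|\xi\|+\mathcal{O}(1)$ for $\|\xi\|\gg 1$, the $\psi$-propagation and the Wave propagation, $\psi_W(\|\xi\|)=\|\xi\|$ (the case $a=1$ of (\ref{frac_schrodinger})), are of comparable oscillation in the sense of Definition \ref{defn}. Theorem \ref{transference_theorem} then gives $\fd_{\psi,n}(s)=\fd_{\psi_W,n}(s)$ for all $s>0$. So it suffices to collect the known bounds for the Wave propagation $e^{it\sqrt{-\Delta}}$ on $\R^n$ and transfer them verbatim.

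For the Wave propagation itself, I would assemble the literature in the following order.

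\begin{enumerate}
\item For $s>n/2$, Sobolev embedding gives $\fd_{\psi_W,n}(s)=0$.
\item For $s\le 1/2$, there is full divergence, $\fd_{\psi_W,n}(s)=n$. This comes from the classical failure of pointwise convergence for the wave equation below $s=1/2$ (Walther's counterexamples), made fractal as in \cite{BBCR}. At the endpoint $s=1/2$ one needs a counterexample that still produces a full-dimensional divergence set; I would cite the construction in \cite{BBCR}, where the wave case is treated explicitly.
\item In the range $1/2<s\le n/2$, the upper bounds come from fractal maximal estimates for the wave/cone extension operator, i.e.\ decay of spherical or conical averages of Fourier transforms of fractal measures. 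Concretely:
\begin{itemize}
\item the bound $n-2s$ via the Kolmogorov--Seliverstov--Plessner argument in \cite{BBCR} for $s\ge (n+1)/4$;
\item bilinear cone restriction/Wolff-type estimates for the intermediate bounds $\frac{n^2-n-1}{n-2}-\frac{2(n-1)s}{n-2}$ and $\frac{3n+1}{2}-4s$;
\item sharp results in $n=2,3$, where the cone restriction theory is complete.
\end{itemize}
\item The lower bounds $n+2-4s$ (for $1/2<s\le1$) and $n-2s$ come from Knapp-type/focusing examples concentrated near a light cone, and from \v{Z}ubrini\'c's singular sets of Sobolev functions (Remark \ref{Zubrinic_remark}), respectively.
\end{enumerate}

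The only delicate point is bookkeeping rather than analysis: I must check that each quoted wave result is formulated for the same quantity $\fd_{\psi_W,n}(s)$. That means the same canonical representative (\ref{canonical_representative}), the one-sided propagator $e^{it\sqrt{-\Delta}}$ (as opposed to the two-data wave Cauchy problem), and the same endpoint conventions. The one-sided/two-sided discrepancy is handled by noting that $\cos(t\sqrt{-\Delta})$ and $\sin(t\sqrt{-\Delta})/\sqrt{-\Delta}$ are combinations of $e^{\pm it\sqrt{-\Delta}}$, and $e^{-it\sqrt{-\Delta}}$ is the complex conjugate propagation applied to $\bar f$. Hence the divergence dimensions agree.

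Once the Wave statement is in hand with exactly the stated ranges, the corollary follows in one line from Theorem \ref{transference_theorem}, applied with $\psi_1=\psi_W$ and $\psi_2=\psi$.
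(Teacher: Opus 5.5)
Your proposal is correct and follows the paper's proof: asymptotic linearity makes $\psi$ of comparable oscillation with the Wave phase, so Theorem \ref{transference_theorem} reduces everything to the known results for $e^{it\sqrt{-\Delta}}$. The only difference is in the sources, which should be cited specifically rather than as generic cone-restriction or Knapp-type arguments: the paper takes the range $s\le 1/2$ directly from Walther's negative result \cite{Walther2}, where failure of a.e.\ convergence already yields a divergence set of positive Lebesgue measure, so no fractal upgrade is needed; it takes the $n=2$ result and the $n\ge 4$ lower bounds from \cite{BBCR}; and it takes the sharp $n=3$ result and the $n\ge 4$ upper bounds from Ham--Ko--Lee \cite{Ham}.
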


\begin{remark} \label{new_results_klein}
As special cases, Corollary \ref{corollary4} provides new results for the family of Klein-Gordon propagations (see Table \ref{table:1}), in particular, solving the classical Carleson's problem for Lebesgue measure. These equations are related to relativistic wave equations that describe how mass affects wave propagation in high-energy physics.
\end{remark}

The graphs of $\fd_{\psi,n}$ for asymptotically linear phase $\psi$ are presented in Figure \ref{figure6} for dimensions $n=2,3,4$.

\begin{figure}[htbp]
  \centering
  
  \begin{subfigure}{0.48\textwidth}
    \centering
    \begin{tikzpicture}
\draw [->, thick] (0,0) -- (3,0) node [below left] {$s$};
\draw [->, thick] (0,0) -- (0,5)  node [below left] {$\fd_{\psi,2}(s)$};
\draw[red] [-, thick] (0,4) -- (1,4);
\draw[red][thick] plot[domain=1:1.5, smooth] (\x,{8-4*\x});
\draw[red][thick] plot[domain=1.5:2, smooth] (\x,{4-2*\x});
\draw[red] [-, thick] (2,0) -- (3,0);
\draw [fill] (0,4) circle [radius=.05] node [left] {\scriptsize$2$};
\draw [fill] (0,2) circle [radius=.05] node [left] {\scriptsize$1$};
\draw [fill] (0,1) circle [radius=.05] node [left] {\scriptsize$\frac{1}{2}$};
\draw [fill] (1,0) circle [radius=.05] node [below] {\scriptsize$\frac{1}{2}$};
\draw [fill] (1.5,0) circle [radius=.05] node [below] {\scriptsize$\frac{3}{4}$};
\draw [fill] (2,0) circle [radius=.05] node [below] {\scriptsize$1$};
\draw[gray] [-, dashed] (1.5,0) -- (1.5,2);
\draw[gray] [-, dashed] (1,0) -- (1,4);
\draw[gray] [-, dashed] (0,2) -- (1.5,2);
\draw[gray] [-, dashed] (0,1) -- (1.5,1);
\end{tikzpicture}
    \caption{dimension $2$}
    \label{figure6.1}
  \end{subfigure}
  \hfill 
  \begin{subfigure}{0.48\textwidth}
    \centering
    \begin{tikzpicture}
\draw [->, thick] (0,0) -- (4,0) node [below left] {$s$};
\draw [->, thick] (0,0) -- (0,7)  node [below left] {$\fd_{\psi,3}(s)$};
\draw[red] [-, thick] (0,6) -- (1,6);
\draw[red][thick] plot[domain=1:2, smooth] (\x,{10-4*\x});
\draw[red][thick] plot[domain=2:3, smooth] (\x,{6-2*\x});
\draw[red] [-, thick] (3,0) -- (4,0);
\draw [fill] (0,6) circle [radius=.05] node [left] {\scriptsize$3$};
\draw [fill] (0,2) circle [radius=.05] node [left] {\scriptsize$1$};
\draw [fill] (1,0) circle [radius=.05] node [below] {\scriptsize$\frac{1}{2}$};
\draw [fill] (2,0) circle [radius=.05] node [below] {\scriptsize$1$};
\draw [fill] (3,0) circle [radius=.05] node [below] {\scriptsize$\frac{3}{2}$};
\draw[gray] [-, dashed] (1,0) -- (1,6);
\draw[gray] [-, dashed] (2,0) -- (2,2);
\draw[gray] [-, dashed] (0,2) -- (2,2);
\end{tikzpicture}
    \caption{dimension $3$}
    \label{figure6.2}
  \end{subfigure}
  \hfill
  \begin{subfigure}{0.48\textwidth}
    \centering
   \begin{tikzpicture}
\draw [->, thick] (0,0) -- (5,0) node [below left] {$s$};
\draw [->, thick] (0,0) -- (0,9)  node [below left] {$\fd_{\psi,4}(s)$};
\draw[red] [-, thick] (0,8) -- (1,8);
\draw[red][thick] plot[domain=2.5:4, smooth] (\x,{8-2*\x});
\draw[red] [-, thick] (4,0) -- (5,0);
\draw[cyan][thick]plot[domain=1:2, smooth] (\x,{11-3*\x});
\draw[cyan][thick]plot[domain=2:2.5, smooth] (\x,{13-4*\x});
\draw[green][thick]plot[domain=1:2, smooth] (\x,{12-4*\x});
\draw[green][thick]plot[domain=2:2.5, smooth] (\x,{8-2*\x});
\draw [fill] (1,0) circle [radius=.05] node [below] {\scriptsize$\frac{1}{2}$};
\draw [fill] (0,8) circle [radius=.05] node [left] {\scriptsize$4$};
\draw [fill] (2.5,0) circle [radius=.05] node [below] {\scriptsize$\frac{5}{4}$};
\draw [fill] (0,3) circle [radius=.05] node [left] {\scriptsize$\frac{3}{2}$};
\draw[gray] [-, dashed] (1,0) -- (1,8);
\draw[gray] [-, dashed] (2.5,0) -- (2.5,3);
\draw[gray] [-, dashed] (0,3) -- (2.5,3);
\draw[red] [-, thick] (3,6) -- (3.2,6);
\pgftext[at={\pgfpoint{4.5cm}{6cm}}]{\textcolor{red}{sharp result}};
\draw[cyan] [-, thick] (3,5.5) -- (3.2,5.5);
\pgftext[at={\pgfpoint{4.6cm}{5.5cm}}]{\textcolor{cyan}{upper bound}};	
\draw[green] [-, thick] (3,5) -- (3.2,5);
\pgftext[at={\pgfpoint{4.5cm}{5cm}}]{\textcolor{green}{lower bound}};
\end{tikzpicture}
    \caption{dimension $4$}
    \label{figure6.3}
  \end{subfigure}
  \caption{$\fd_{\psi,n}$ for asymptotically linear phase $\psi$}
  \label{figure6}
\end{figure}

\begin{proof}[Proof of Corollary \ref{corollary4}]
For the Wave propagation, by Walther's negative result \cite{Walther2} and the Sobolev embedding, we have for general $n \ge 1$,
\begin{equation*}
\fd_{\psi,n}(s) = \begin{cases}
n, \:\:\:&s\le\frac{1}{2}\\
0, \:\:\:&s>\frac{n}{2}\:.
\end{cases}
\end{equation*}
In dimension $n=2$, Barcel\'o et. al. \cite{BBCR} established the sharp result:
\begin{equation*}
\fd_{\psi,2}(s) = \begin{cases}
4-4s, \:\:\:&\frac{1}{2} \le s <\frac{3}{4}\\
2-2s, \:\:\:&\frac{3}{4}< s \le 1\:.
\end{cases}
\end{equation*}
In dimension $n=3$, Ham-Ko-Lee \cite{Ham} established the sharp result:
\begin{equation*}
\fd_{\psi,3}(s) = \begin{cases}
5-4s, \:\:\:&\frac{1}{2} \le s \le 1\\
3-2s, \:\:\:&1 \le s \le \frac{3}{2}\:.
\end{cases}
\end{equation*}
Finally, for dimension $n \ge 4$, the results stated in (2.b), in the statement of Corollary \ref{corollary4} are due to upper bounds in \cite{Ham} and lower bounds in \cite{BBCR}.   

The result for dispersive equations with asymptotically linear phase, in Corollary \ref{corollary4}, now follows by Theorem \ref{transference_theorem} and the above mentioned results for the Wave propagation.
\end{proof}

\section{Effect of radial symmetry}
\label{sec6}
In this section, we study divergence sets for dispersive equations  on $\R^n$, $n \ge 2$, for radial initial data. In Subsection \ref{subsec6.1}, we start off by building the set-up with some general results. In Subsection \ref{subsec6.2}, we prove Theorem \ref{radial_concave_thm}. Then in Subsection \ref{subsec6.3}, we go on to prove results for general classes of dispersive PDEs, as applications of the transference principle.

\subsection{Some general results} \label{subsec6.1}
We recall the canonical choice of representation of the solution and the initial data given by (\ref{canonical_representative}), which also coincides up to a set of Hausdorff dimension $\le n-2s$, with the classical choice given as a Bessel potential. Then working out the proof of Lemma \ref{N-maximal-estimate} for arbitrary balls $\mathbb{B}(x,r)$ by following the arguments of Appendices A and C in \cite{BBCR}, we have the following result on annuli:
\begin{lemma} \label{N-maximal-estimate-radial}
Let $s>0$ and $\alpha >n-2s$. Then for any $f \in H^s(\R^n)$ and any $\mu \in \M^\alpha(\mathbb{A}(r_1,r_2))$, for $0<r_1<r_2<\infty$ and any $t \in \R$ , we have
\begin{equation*}
\left\|\sup_{N \ge 1} \left|S^N_{\psi,t}f\right|\right\|_{L^2(d\mu)} \lesssim \sqrt{c_\alpha(\mu)}\:\|f\|_{H^s(\R^n)}\:,
\end{equation*}
where the implicit constant depends only on $r_1,r_2$\:.
\end{lemma}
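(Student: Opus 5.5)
The plan is to reduce to Lemma \ref{N-maximal-estimate} by translation and dilation, and then cover the annulus by finitely many unit-scale balls. The only point needing care is that dilation changes the phase $\psi$; this is harmless only because the bound must be uniform in $\psi$ and in $t$.

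\textbf{Step 1: balls centred at arbitrary points.} Fix $x_0\in\R^n$ and $\mu\in\M^\alpha(\mathbb{B}(x_0,1))$. Let $\tau_{x_0}\mu$ be the push-forward under $x\mapsto x-x_0$. Then $\tau_{x_0}\mu\in\M^\alpha(\mathbb{B}(0,1))$ with the same Frostmann constant.

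Translation commutes with $S^N_{\psi,t}$, because the multiplier $\Phi(N^{-1}\xi)e^{it\psi(\|\xi\|)}$ does not depend on $x$. Hence
\begin{equation*}
S^N_{\psi,t}f(x)=S^N_{\psi,t}\left(f(\cdot+x_0)\right)(x-x_0)\:.
\end{equation*}
Since $\|f(\cdot+x_0)\|_{H^s}=\|f\|_{H^s}$, Lemma \ref{N-maximal-estimate} applied to $f(\cdot+x_0)$ and $\tau_{x_0}\mu$ gives the same estimate on $\mathbb{B}(x_0,1)$, with a constant independent of $x_0$.

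\textbf{Step 2: balls of fixed radius.} To pass to balls of radius $\rho$, I would rescale $x\mapsto x/\rho$.

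Under this rescaling the multiplier becomes $\Phi(N^{-1}\rho^{-1}\xi)e^{it\psi(\|\xi\|/\rho)}$. Two things change. First, the phase becomes the new measurable phase $\psi_\rho(r):=\psi(r/\rho)$. Second, the Gaussian cut-off is evaluated at the non-integer scale $N\rho$. In addition, $c_\alpha$ scales by $\rho^{-\alpha}$ and the $H^s$ norm changes by a factor depending only on $\rho$.

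So the dilation step needs Lemma \ref{N-maximal-estimate} to hold uniformly over all measurable phases and over the scales $N\rho$. That is exactly what the argument of Appendices A and C in \cite{BBCR} provides. There the time evolution factor $e^{it\psi}$ is unimodular and is simply absorbed into $\what f$. The maximal bound over $N$ comes from the Gaussian being a radially decreasing approximate identity together with the fractal estimate for Bessel potentials. That fractal estimate holds for $\alpha>n-2s$ on any ball, with a constant depending on the radius.

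I expect this bookkeeping to be the main obstacle. Alternatively, I would avoid dilation altogether: since $r_2$ is fixed, one could directly rerun that appendix argument with $\mathbb{B}(0,1)$ replaced by $\mathbb{B}(0,r_2)$. Only the local integrability of the Bessel kernel over a ball of radius $\asymp r_2$ enters, so the constant depends on $r_2$ alone.

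\textbf{Step 3: cover the annulus.} Cover $\mathbb{A}(r_1,r_2)$ by $M=M(r_1,r_2)$ balls $\mathbb{B}(x_j,1)$, for instance using a unit-spaced grid inside $\mathbb{B}(0,r_2)$.

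Set $\mu_j:=\mu|_{\mathbb{B}(x_j,1)}$. Each $\mu_j$ is supported in the closed ball and satisfies $c_\alpha(\mu_j)\le c_\alpha(\mu)$. The supremum over $N$ is pointwise, and $\mu\le\sum_j\mu_j$, so
\begin{equation*}
\Big\|\sup_N|S^N_{\psi,t}f|\Big\|_{L^2(d\mu)}^2\le\sum_{j=1}^M\Big\|\sup_N|S^N_{\psi,t}f|\Big\|_{L^2(d\mu_j)}^2\lesssim M\,c_\alpha(\mu)\,\|f\|_{H^s}^2\:.
\end{equation*}
Taking square roots gives the claim, with the implicit constant depending only on $r_1$ and $r_2$ through $M$.

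A slightly larger radius can be used to handle support on the boundary of the balls. The lower radius $r_1$ plays no real role; it enters only because the lemma is stated for annuli, which is the form used later for radial data.
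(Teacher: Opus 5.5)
Your argument is correct, and Steps 1 and 3, which carry the whole proof, take a different route from the paper. The paper does not derive the annulus estimate from Lemma \ref{N-maximal-estimate}. It asserts that the Bessel-potential and approximate-identity argument of Appendices A and C of \cite{BBCR} can be rerun on an arbitrary ball $\mathbb{B}(x,r)$, and then applies it on a ball containing $\mathbb{A}(r_1,r_2)$.

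You instead use Lemma \ref{N-maximal-estimate} as a black box, combined with:
\begin{itemize}
\item translation invariance of the multiplier $\Phi(N^{-1}\xi)e^{it\psi(\|\xi\|)}$;
\item translation invariance of $\|\cdot\|_{H^s}$ and of $c_\alpha$;
\item a finite covering, using $c_\alpha(\mu|_B)\le c_\alpha(\mu)$ and $\mu\le\sum_j\mu_j$.
\end{itemize}
This is more elementary and self-contained. It requires no re-examination of \cite{BBCR}, and it makes the dependence on $r_2$ explicit through the number of covering balls, $\lesssim (1+r_2)^n$. The paper's route gives the bound on one large ball directly, at the cost of re-checking the appendix. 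Its payoff is that one sees directly why nothing depends on $\psi$ or $t$: the unimodular factor is absorbed into $\what f$, as you also observe.

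Two clean-ups are needed.
\begin{itemize}
\item Step 2 is never used. Step 3 involves only unit-scale balls, so Step 1 suffices, and the dilation discussion (changed phase, non-integer Gaussian scales) can be dropped. Your fallback of rerunning the appendix on $\mathbb{B}(0,r_2)$ is essentially the paper's proof.
\item For the boundary issue, do not enlarge the balls, since that would reintroduce dilation. Instead, cover $\mathbb{B}(0,r_2)\supset\mathbb{A}(r_1,r_2)$ by balls $\mathbb{B}(x_j,\tfrac12)$ and set $\mu_j:=\mu|_{\overline{\mathbb{B}(x_j,1/2)}}$. The support of $\mu_j$ then lies in the open ball $\mathbb{B}(x_j,1)$, and Step 1 applies verbatim.
\end{itemize}
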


As a consequence of Lemma \ref{N-maximal-estimate-radial}, we have the following  fixed-time fractal Sobolev embedding on annuli:
\begin{lemma} \label{Annulus-Sobolev-embedding}
Let $s>0$ and $\alpha >n-2s$. Then for any $f \in H^s(\R^n)$, any $\mu \in \M^\alpha(\mathbb{A}(r_1,r_2))$, for $0<r_1<r_2<\infty$ and any $t \in \R$, we have
\begin{equation*}
\left\|u(\cdot,t)\right\|_{L^2(d\mu)} \lesssim \sqrt{c_\alpha(\mu)}\:\|f\|_{H^s(\R^n)}\:,
\end{equation*}
where the implicit constant depends only  on $r_1,r_2$\:.
\end{lemma}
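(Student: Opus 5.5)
The plan is to deduce Lemma \ref{Annulus-Sobolev-embedding} from Lemma \ref{N-maximal-estimate-radial}, in the same way Corollary \ref{fractal-Sobolev-embedding} follows from Lemma \ref{N-maximal-estimate}. Fix $f \in H^s(\R^n)$, $\mu \in \M^\alpha(\mathbb{A}(r_1,r_2))$ and $t \in \R$. By the canonical choice (\ref{canonical_representative}), $u(x,t)=\lim_{N\to\infty} S^N_{\psi,t}f(x)$ at every $x$ where the limit exists, so the first task is to show that this limit exists $\mu$-a.e. Once that is known, the pointwise bound
\begin{equation*}
|u(x,t)| \le \sup_{N \ge 1}\left|S^N_{\psi,t}f(x)\right|
\end{equation*}
holds $\mu$-a.e. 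Integrating against $\mu$ and invoking Lemma \ref{N-maximal-estimate-radial} then gives $\|u(\cdot,t)\|_{L^2(d\mu)} \lesssim \sqrt{c_\alpha(\mu)}\,\|f\|_{H^s(\R^n)}$, with a constant depending only on $r_1,r_2$.

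The existence of the limit $\mu$-a.e. comes from the standard density argument for maximal functions. Take $g \in \Ss(\R^n)$. Since $\what g\in L^1$, dominated convergence shows that $S^N_{\psi,t}g(x)$ converges to the classical formula (\ref{schwartz_solution}) for every $x$. Next use the sublinearity of the oscillation
\begin{equation*}
\Omega h(x):=\limsup_{N,M\to\infty}\left|S^N_{\psi,t}h(x)-S^M_{\psi,t}h(x)\right| \le 2\sup_{N\ge1}\left|S^N_{\psi,t}h(x)\right|\:.
\end{equation*}
Since $\Omega g\equiv 0$, we have $\Omega f=\Omega(f-g)$. Chebyshev's inequality and Lemma \ref{N-maximal-estimate-radial} applied to $f-g$ then give
\begin{equation*}
\mu\{\Omega f>\lambda\} \lesssim \lambda^{-2}\,c_\alpha(\mu)\,\|f-g\|_{H^s}^2\:.
\end{equation*}
This can be made arbitrarily small by density of $\Ss(\R^n)$ in $H^s(\R^n)$. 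Hence $\Omega f=0$ $\mu$-a.e., and the limit exists and is finite $\mu$-a.e.

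The main obstacle is not in this corollary itself but in its input, Lemma \ref{N-maximal-estimate-radial}, which is being assumed. The only point to watch here is that every constant used depends solely on $r_1,r_2$ (through the support of $\mu$) and on $c_\alpha(\mu)$, and not on $f$ or $t$. This holds because Lemma \ref{N-maximal-estimate-radial} is uniform in $t$ and the density step introduces no new constants.
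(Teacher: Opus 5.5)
Your proposal is correct and follows the route the paper intends: the paper gives no separate proof and simply calls the lemma a consequence of Lemma \ref{N-maximal-estimate-radial}, via the pointwise bound $|u(\cdot,t)|\le \sup_{N\ge1}|S^N_{\psi,t}f|$ integrated against $\mu$. Your density/oscillation argument for the $\mu$-a.e. existence of the limit in (\ref{canonical_representative}) just makes explicit a step the paper leaves tacit by deferring to the appendices of \cite{BBCR}.
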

\begin{remark} \label{BR_remark}
The index $n-2s$ showing up above, is actually the critical index for Hausdorff dimension of singular sets of radial Sobolev functions $f \in H^s(\R^n)$ for $s \in (0,1/2)$. In \cite{BR}, the authors constructed radial functions $f \in H^s(\R^n)$ for $s \in (0,1/2)$, having singularities on sets of Hausdorff dimension $<n-2s$. Consequently, our problem of estimating Hausdorff dimension of divergence sets becomes ill-posed for $\wtilde{\fd}_{\psi,n}(s)<n-2s$ in the range $s \in (0,1/2)$.
\end{remark} 

In the regularity regime $s>1/2$, however, the situation is drastically better:
\begin{lemma} \label{lemma_s>1/2}
We have for all phases $\psi$ and $s>1/2$,
\begin{equation*}
\left\|\sup_{t \in \I} \sup_{N \ge 1} \left|S^N_{\psi,t}f\right|\right\|_{L^1(d\mu)} \lesssim \|f\|_{H^s(\R^n)}\:,
\end{equation*}
where $\mu \in \M^\alpha(\mathbb{A}(r_1,r_2))$, for $0<r_1<r_2<\infty,\:\alpha>0,\:f \in H^s(\R^n)$ radial and $S^N_{\psi,t}f$ is as in (\ref{N-scale}). The implicit constant in the above inequality depends only on $\mu,\:r_1,\:r_2$\:.
\end{lemma}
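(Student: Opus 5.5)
The plan is to use radial symmetry to write the regularized solution as a one-dimensional Hankel-type integral and to dominate it pointwise, uniformly in $t$ and $N$. The bound will not depend on $\psi$ at all. For radial $f$ with $\what{f}(\xi)=F(\|\xi\|)$, set $|x|=r \in [r_1,r_2]$. The Fourier inversion formula in polar coordinates gives
\begin{equation*}
S^N_{\psi,t}f(x)= c_n\int_0^\infty \Phi\left(N^{-1}\rho\right)e^{it\psi(\rho)}\,F(\rho)\,(r\rho)^{-\frac{n-2}{2}}J_{\frac{n-2}{2}}(r\rho)\,\rho^{n-1}\,d\rho\:.
\end{equation*}
First I take absolute values inside the integral. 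This removes both the unimodular factor $e^{it\psi(\rho)}$ and the factor $0<\Phi(N^{-1}\rho)\le 1$. That gives
\begin{equation*}
\sup_{t\in\I}\sup_{N\ge1}\left|S^N_{\psi,t}f(x)\right| \lesssim \int_0^\infty |F(\rho)|\,\left|(r\rho)^{-\frac{n-2}{2}}J_{\frac{n-2}{2}}(r\rho)\right|\rho^{n-1}\,d\rho\:,
\end{equation*}
which holds for every $x$ in the annulus and for every phase $\psi$.

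Next I use the standard Bessel bounds. For $r\rho\le 1$ the quantity $|(r\rho)^{-\nu}J_\nu(r\rho)|$ is $\lesssim 1$. For $r\rho\ge1$ we have $|J_\nu(r\rho)|\lesssim (r\rho)^{-1/2}$. Since $r\ge r_1>0$, the kernel is therefore $\lesssim_{r_1} (1+\rho)^{-\frac{n-1}{2}}$. I then apply Cauchy--Schwarz with the weight $(1+\rho^2)^{s}\rho^{n-1}$:
\begin{equation*}
\int_0^\infty |F(\rho)|(1+\rho)^{-\frac{n-1}{2}}\rho^{n-1}d\rho \le \left(\int_0^\infty |F|^2(1+\rho^2)^s\rho^{n-1}d\rho\right)^{\frac12}\left(\int_0^\infty (1+\rho)^{-(n-1)-2s}\rho^{n-1}d\rho\right)^{\frac12}.
\end{equation*}
The first factor is $\asymp \|f\|_{H^s(\R^n)}$. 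The second factor is finite exactly when $2s>1$, which is where the hypothesis $s>1/2$ enters.

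This produces a uniform pointwise bound $\sup_{t}\sup_N|S^N_{\psi,t}f(x)|\lesssim_{r_1}\|f\|_{H^s}$ on $\mathbb{A}(r_1,r_2)$. Integrating against $\mu$ gives the $L^1(d\mu)$ estimate with constant $\mu(\mathbb{A}(r_1,r_2))<\infty$, since $\mu$ is a Radon measure supported on a compact set. This accounts for the dependence of the constant on $\mu$ and $r_1$ (and formally on $r_2$). The Frostman condition $\alpha>0$ is not needed beyond finiteness of the measure.

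I expect the main technical point to be justifying that the integral representation is valid and absolutely convergent for general radial $f\in H^s$, not only for Schwartz data. The Cauchy--Schwarz bound above already shows that $\rho^{n-1}F(\rho)(1+\rho)^{-(n-1)/2}$ is integrable when $s>1/2$. Because the Gaussian cutoff $\Phi(N^{-1}\cdot)$ makes each $S^N_{\psi,t}f$ an absolutely convergent Fourier integral, the polar-coordinate reduction holds directly for each $N$, and no density argument should be needed. A secondary check is that the Bessel estimate is uniform over $r\in[r_1,r_2]$, which holds because $r$ stays bounded away from $0$.
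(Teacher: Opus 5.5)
Your proposal is correct and follows essentially the same route as the paper: a pointwise bound, uniform in $t\in\I$, $N\ge1$ and $x\in\mathbb{A}(r_1,r_2)$, obtained by discarding the unimodular phase and the Gaussian cutoff, using Bessel decay $|J_\nu(y)|\lesssim y^{-1/2}$ and Cauchy--Schwarz (with $s>1/2$ giving integrability of $(1+\lambda^2)^{-s}$), and then integrating against the finite measure $\mu$. The only cosmetic differences are that the paper writes $f=G_s*g$ with $g\in L^2(\R^n)$ radial and pulls out $\|x\|^{-\frac{n-1}{2}}$ via the uniform bound $|y^{1/2}J_{\frac{n-2}{2}}(y)|\lesssim 1$, whereas you bound the kernel by $(1+\rho)^{-\frac{n-1}{2}}$ and work directly with the weighted $L^2$ norm of $\what{f}$.
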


\begin{proof}
Let $G_s$ be the Bessel kernel on $\R^n$. Then writing $f=G_s*g$, it suffices to prove that 
\begin{equation} \label{lemma_pf_eq1}
\int_{\mathbb{A}(r_1,r_2)} \sup_{t \in \I} \sup_{N \ge 1} \left|\int_{\R^n} \frac{\Phi\left(N^{-1}\xi\right)\:e^{i\left(x\cdot \xi + t\psi(\|\xi\|)\right)}\:\what{g}(\xi)}{(1+\|\xi\|^2)^{s/2}}\:d\xi\right|\:d\mu(x) \lesssim \|g\|_{L^2(\R^n)}\:,\:\:\text{ if } s>\frac{1}{2}\:,
\end{equation} 
uniformly in $g \in L^2(\R^n)$ radial. As $\mu$ is a Radon measure on $\mathbb{A}(r_1,r_2)$, to prove (\ref{lemma_pf_eq1}), it is enough to prove the pointwise inequality,
\begin{equation} \label{lemma_pf_eq2}
\left|\int_{\R^n} \frac{\Phi\left(N^{-1}\xi\right)\:e^{i\left(x\cdot \xi + t\psi(\|\xi\|)\right)}\:\what{g}(\xi)}{(1+\|\xi\|^2)^{s/2}}\:d\xi\right| \lesssim \|g\|_{L^2(\R^n)}\:,\:\:\text{ if } s>\frac{1}{2}\:,
\end{equation}
uniformly in $N \ge 1,\:t \in \I,\:x \in \mathbb{A}(r_1,r_2)$ and $g \in L^2(\R^n)$ radial. Now using the radial symmetry of the phase $\psi$ and the function $g$, by polar decomposition, we have by writing $\lambda=\|\xi\|$,
\begin{equation*}
\left|\int_{\R^n} \frac{\Phi\left(N^{-1}\xi\right)\:e^{i\left(x\cdot \xi + t\psi(\|\xi\|)\right)}\:\what{g}(\xi)}{(1+\|\xi\|^2)^{s/2}}\:d\xi\right| = C \left|\int_{0}^\infty \frac{\Phi\left(N^{-1}\lambda\right)\lambda^{n-1}\:e^{it\psi(\lambda)}\:\:\what{g}(\lambda)}{(1+\lambda^2)^{s/2}}\frac{J_{\frac{n-2}{2}}(\lambda\|x\|)}{(\lambda\|x\|)^{\frac{n-2}{2}}}\:d\lambda\right|\:,
\end{equation*}
where $J_{(n-2)/2}$ denotes the Bessel function of the first kind of order $(n-2)/2$ and $C$ is some positive constant depending only on $n$. Then using the boundedness of Bessel functions of the first kind near $0$ and decay away from $0$ (see \cite{Szego}), we have the well-known estimate,
\begin{equation*}
\left|(\lambda\|x\|)^{\frac{1}{2}}\:J_{\frac{n-2}{2}}(\lambda\|x\|)\right| \lesssim 1\:,
\end{equation*}
with the implicit constant depending only on $n$. Thus by the Cauchy-Schwarz inequality and the Plancherel theorem, we get uniformly in $N \ge 1,\:t \in \I,\:x \in \mathbb{A}(r_1,r_2)$ and $g \in L^2(\R^n)$ radial,
\begin{eqnarray*}
&&\left|\int_{\R^n} \frac{\Phi\left(N^{-1}\xi\right)\:e^{i\left(x\cdot \xi + t\psi(\|\xi\|)\right)}\:\what{g}(\xi)}{(1+\|\xi\|^2)^{s/2}}\:d\xi\right| \\
&\lesssim & \frac{1}{\|x\|^{\frac{n-1}{2}}} \int_0^\infty \frac{\lambda^{\frac{n-1}{2}} \:|\what{g}(\lambda)|}{(1+\lambda^2)^{s/2}}\:d\lambda\\
& \lesssim & \frac{1}{\|x\|^{\frac{n-1}{2}}} \left(\int_0^\infty \lambda^{n-1}\:|\what{g}(\lambda)|^2\:d\lambda\right)^\frac{1}{2} \left(\int_0^\infty \frac{d\lambda}{(1+\lambda^2)^s}\right)^\frac{1}{2} \\
& \lesssim & \|g\|_{L^2(\R^n)}\:,
\end{eqnarray*}
as $x \in \mathbb{A}(r_1,r_2)$ and $s>1/2$. Hence we obtain (\ref{lemma_pf_eq2}) and this completes the proof of Lemma \ref{lemma_s>1/2}.
\end{proof}

\begin{remark}
The proof of Lemma \ref{lemma_s>1/2} follows from the pointwise decay estimates of Bessel functions and does not involve the oscillation afforded by the Fourier multiplier.
\end{remark}

\subsection{Fractional Schr\"odinger equations with concave phase and the wave propagation} \label{subsec6.2}
In this subsection, we prove Theorem \ref{radial_concave_thm}, but first we recall the following crucial lemma which follows from the proof of \cite[Lemma Appendix B.2.]{Koruku}:
\begin{lemma} \label{CS_lemma}
Let $\alpha \in (0,1]$ and $\nu$ be an $\alpha$-dimensional measure on $\R$. There exists a constant $C>0$, such that for any interval $[c,d] \subset \R$, we have
\begin{equation*}
\left|\int_{\R} \int_{\R} g_1(x)\:g_2(x')\:\chi_{[c,d]}(x-x')\:d\nu(x)\:d\nu(x')\right| \le C(d-c)^{\alpha}\:\|g_1\|_{L^2(d\nu)}\:\|g_2\|_{L^2(d\nu)}\:.
\end{equation*}
Moreover, if $\rho \in (0,\alpha)$, then  there exists a constant $C>0$ such that
\begin{equation*}
\left|\int_{\R} \int_{\R} g_1(x)\:g_2(x')\:|x-x'|^{-\rho}\:d\nu(x)\:d\nu(x')\right| \le C\:\|g_1\|_{L^2(d\nu)}\:\|g_2\|_{L^2(d\nu)}\:.
\end{equation*}
\end{lemma}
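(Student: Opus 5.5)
The approach is the Schur test. Each bilinear form has a nonnegative kernel $K(x,x')$. Its marginals $\int K(x,x')\,d\nu(x')$ and $\int K(x,x')\,d\nu(x)$ can be bounded uniformly using only the Frostman condition (\ref{Frostamnn_constant}) for $\nu$. The first step is the standard Cauchy--Schwarz splitting. Writing $|g_1(x)g_2(x')|K = (|g_1(x)|K^{1/2})(|g_2(x')|K^{1/2})$, we get
\begin{equation*}
\left|\int_{\R}\int_{\R} g_1(x)g_2(x')K(x,x')\,d\nu(x)\,d\nu(x')\right| \le \left(\int_{\R} |g_1(x)|^2 A(x)\,d\nu(x)\right)^{\frac12}\left(\int_{\R} |g_2(x')|^2 B(x')\,d\nu(x')\right)^{\frac12},
\end{equation*}
where $A(x)=\int K(x,x')\,d\nu(x')$ and $B(x')=\int K(x,x')\,d\nu(x)$. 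So it suffices to show $\sup_x A(x)$ and $\sup_{x'} B(x')$ are at most $C(d-c)^\alpha$ in the first case, and at most $C$ in the second.

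For the indicator kernel $K=\chi_{[c,d]}(x-x')$, fix $x$. Then $A(x)=\nu(x-[c,d])$, and $x-[c,d]$ is an interval of length $d-c$. It is contained in the open ball centred at $x-\frac{c+d}{2}$ with radius $d-c$; the radius is slightly enlarged so that the closed interval fits in an open ball. The Frostman condition therefore gives $A(x)\le c_\alpha(\nu)(d-c)^\alpha$. The same argument applies to $B(x')=\nu(x'+[c,d])$. This proves the first inequality with $C$ a multiple of $c_\alpha(\nu)$, uniformly in the interval.

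For the Riesz kernel $K=|x-x'|^{-\rho}$ with $0<\rho<\alpha$, both marginals have the same form, so it suffices to bound $A(x)$. By the layer-cake formula,
\begin{equation*}
A(x)=\int_0^\infty \nu\left(\{x' : |x-x'|<\lambda^{-1/\rho}\}\right)d\lambda \le \int_0^1 \nu(\R)\,d\lambda + c_\alpha(\nu)\int_1^\infty \lambda^{-\alpha/\rho}\,d\lambda .
\end{equation*}
The last integral is finite exactly because $\alpha/\rho>1$. This gives $\sup_x A(x)\le \nu(\R)+\frac{c_\alpha(\nu)}{\alpha/\rho-1}$. The same dyadic-shell bound could also be done by summing $\nu(\{2^{-k-1}\le|x-x'|<2^{-k}\})\,2^{(k+1)\rho}\lesssim c_\alpha(\nu)\,2^{k(\rho-\alpha)}$ over $k\ge 0$.

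The only genuine subtlety, and the step I expect to need care, is this large-scale part in the second inequality. It requires $\nu$ to have finite total mass, or equivalently bounded support, since $\nu(B(x,r))\le c_\alpha(\nu)r^\alpha$. A measure satisfying only the Frostman bound on all of $\R$, such as Lebesgue measure when $\alpha=1$, would make $A(x)$ infinite. I would therefore state and use the lemma for $\nu$ in $\M^\alpha$ of a bounded set, which is the situation throughout the paper (measures on $\mathbb{B}(0,1)$ or on annuli). The constant then depends on $c_\alpha(\nu)$, $\nu(\R)$, $\alpha$ and $\rho$. In the first inequality no such issue arises, since $\chi_{[c,d]}$ is local.
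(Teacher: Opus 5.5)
Your proof is correct. The paper does not prove this lemma itself. It only records that it ``follows from the proof of'' a lemma in an appendix of an external reference, so your Schur-test argument gives a self-contained proof along the standard lines: Cauchy--Schwarz against the nonnegative kernel, with both marginals controlled by the Frostman bound. The dyadic-shell variant you mention, which reduces the Riesz kernel to the interval estimate on each shell, is the other usual way to organize the second part.

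Your write-up adds two things. First, explicit constants: a multiple of $c_\alpha(\nu)$ in the first bound, and $\nu(\R)+\rho\,c_\alpha(\nu)/(\alpha-\rho)$ in the second. Second, the observation that the second bound needs control of $\nu$ at large scales, such as $\nu(\R)<\infty$. Bounded support guarantees finite mass, but the two are not equivalent as your wording suggests. This caveat is not an artifact of the Schur test: for arbitrary $\alpha$-dimensional measures on $\R$, the second inequality as literally stated is false. For Lebesgue measure ($\alpha=1$) and $g_1=g_2=\chi_{[0,R]}$, the left-hand side equals $2R^{2-\rho}/\bigl((1-\rho)(2-\rho)\bigr)$, while the right-hand side is $CR$. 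The paper only applies the lemma to the radial pushforward $\mu_0$, which is supported in $(r_1,r_2)$, so your restricted formulation is exactly the version that is used.

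Incidentally, in that application the paper pairs the kernel $|r-r'|^{-\alpha_0}$ with $\mu_0\in\M^{\alpha_0}(r_1,r_2)$, which is the endpoint $\rho=\alpha$ excluded by the lemma. Your layer-cake computation still works there once the truncation is kept. For $|x-x'|^{-\alpha}\chi_{\{|x-x'|>\delta\}}$ with $\delta<1$, it bounds both marginals by $\nu(\R)+\alpha\,c_\alpha(\nu)\log(1/\delta)$. For $\delta=2^{-2ks_*/\alpha_0}$, the resulting extra factor $\lesssim 1+k$ is absorbed by $2^{-2k\varepsilon}$.
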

We now present the proof of Theorem \ref{radial_concave_thm}:
\begin{proof}[Proof of Theorem \ref{radial_concave_thm}]
We first prove part (1). The result for the range $s \in (0,a/4)$ follows from Walther's work in \cite{Walther3}. We now focus on the range $s \in (a/4,1/2)$ and prove the maximal estimate,
\begin{equation} \label{concave_pf_eq1}
\|S^*_\psi f\|_{L^1(d\mu)} \lesssim \|f\|_{H^s(\R^n)}\:,
\end{equation} 
for $\mu \in \M^\alpha(\mathbb{A}(r_1,r_2))\:,\: 0<r_1<r_2<\infty$, all $f \in \Ss(\R^n)$ radial and 
\begin{equation*}
\alpha > \max \left\{n-\frac{1}{2}\:+\:\frac{1-4s}{2(1-a)}\:,\: n-2s\right\}\:,
\end{equation*}
or equivalently for 
\begin{equation*}
s>\max\left\{\frac{1}{2}-\frac{a}{4}-\frac{(1-a)(\alpha-n+1)}{2}\:,\: \frac{1}{2}-\frac{(\alpha-n+1)}{2}\right\}\:,
\end{equation*}
where $\alpha \in (n-1,n)$\:. Then by considering the Bessel kernel $G_s$, it suffices to prove that
\begin{equation} \label{concave_pf_eq2}
\|S^*_\psi (G_s *f)\|_{L^1(d\mu)} \lesssim \|f\|_{L^2(\R^n)}\:,
\end{equation}
for $\mu \in \M^\alpha(\mathbb{A}(r_1,r_2))\:,\: 0<r_1<r_2<\infty$, all $f \in \Ss(\R^n)$ radial. Denoting $\|x\|$ by $r$, using the radial symmetry of the phase $\psi$ and the function $f$, by polar decomposition, we have 
\begin{eqnarray*}
S_{\psi,t} \left(G_s*f\right)(x)&=&S_{\psi,t} \left(G_s*f\right)(r) \\
&=& \frac{C}{r^{\frac{n-1}{2}}} \int_{0}^\infty \what{f}(\lambda) \:e^{it\lambda^a} (\lambda r)^{\frac{1}{2}} J_{\frac{n-2}{2}}(\lambda r)\frac{\lambda^{\frac{n-1}{2}} }{(1+\lambda^2)^{s/2}}\:d\lambda\:.
\end{eqnarray*} 
Thus it suffices to work with radial measures. We then consider, $d\mu_0(r):=d\mu(x)$, the one-dimensional radial pushforward of $\mu$. Now as for each interval $I_v$ of length $v$, we can find an $n$-ball $B_{nv}$ of radius $nv$ so that,
\begin{equation*}
\mu_0\left(I_v\right)\:v^{n-1} \lesssim \mu\left(B_{nv}\right)\:,
\end{equation*}
it follows that $\mu_0 \in \M^{\alpha-(n-1)}(r_1,r_2)$\:. Hence to prove (\ref{concave_pf_eq2}), it suffices to prove the following estimate:
\begin{equation*} 
\int_{r_1}^{r_2} \sup_{t \in \I} \left|S_{\psi,t} \left(G_s*f\right)(r)\right|\:r^{\frac{n-1}{2}}\:d\mu_0(r) \lesssim \|f\|_{L^2(\R^n)}\:,
\end{equation*}
which in turn by linearization, is equivalent to prove 
\begin{equation} \label{concave_pf_eq3}
\int_{r_1}^{r_2} \left|S_{\psi,t(r)} \left(G_s*f\right)(r)\right|\:r^{\frac{n-1}{2}}\:d\mu_0(r) \lesssim \|f\|_{L^2(\R^n)}\:,
\end{equation}
for $\mu_0 \in \M^{\alpha_0}(r_1,r_2)\:,\: 0<r_1<r_2<\infty$, all $f \in \Ss(\R^n)$ radial and 
\begin{equation*}
s>\max\left\{\frac{1}{2}-\frac{a}{4}-\frac{(1-a)\alpha_0}{2}\:,\: \frac{1}{2}-\frac{\alpha_0}{2}\right\}\:,
\end{equation*}
where $\alpha_0 \in (0,1)$ and $t : (r_1,r_2) \to \I$ is a measurable function. Finally, by duality, to prove (\ref{concave_pf_eq3}), it suffices to prove that
\begin{equation} \label{concave_pf_eq4}
\|Tg\|_{L^2(0,\infty)} \lesssim \|g\|_{L^\infty(d\mu_0)}\:,
\end{equation}
for $g \in L^\infty(d\mu_0)$, where 
\begin{equation*}
Tg(\lambda):= \frac{1}{(1+\lambda^2)^{s/2}} \int_{r_1}^{r_2} (\lambda r)^{\frac{1}{2}} J_{\frac{n-2}{2}}(\lambda r)\:e^{it(r)\lambda^a}\:g(r)\:d\mu_0(r)\:.
\end{equation*}
Now invoking the facts (see \cite{Szego}) that there are complex numbers $a_1,a_2$ and some constant $C>0$, such that 
\begin{eqnarray*}
\left|y^{\frac{1}{2}}\:J_{\frac{n-2}{2}}(y) -\left(a_1e^{iy}+a_2e^{-iy}\right)\right| &\le& Cy^{-1}\:,\:y>1\:,\\
\left|y^{\frac{1}{2}}\:J_{\frac{n-2}{2}}(y) -\left(a_1e^{iy}+a_2e^{-iy}\right)\right| &\le& C\:,\:0<y \le 1\:,
\end{eqnarray*}
we decompose,
\begin{equation*}
Tg(\lambda)=a_1T_1g(\lambda)\:+\:a_2T_1g(-\lambda)\:+\:T_2g(\lambda)\:,
\end{equation*}
where
\begin{equation*}
T_1g(\lambda) := \frac{1}{(1+\lambda^2)^{s/2}} \int_{r_1}^{r_2} e^{i\left(\lambda r+t(r)|\lambda|^a\right)}\:g(r)\:d\mu_0(r)\:,
\end{equation*}
and $T_2g$ is the part corresponding to the remainder term. By \cite[pp. 7-8]{BR}, we have
\begin{equation} \label{concave_pf_eq5}
\|T_2g\|_{L^2(0,\infty)} \lesssim \|g\|_{L^\infty(d\mu_0)}\:.
\end{equation}
We now focus on $T_1$. We set,
\begin{equation*}
Ug(\lambda):=\frac{1}{(1+\lambda^2)^{s/2}} \int_{r_1}^{r_2} e^{i\left(\lambda r+t(r)|\lambda|^a\right)}\:g(r)\:d\mu_0(r)\:,\:\:\lambda \in \R\:,
\end{equation*}
and shall prove that
\begin{equation} \label{concave_pf_eq6}
\|Ug\|_{L^2(\R)} \lesssim \|g\|_{L^\infty(d\mu_0)}\:,
\end{equation}
from which the required estimates for $T_1$ would follow.

We now consider an auxiliary non-negative, even function $\eta \in C^\infty_c(\R)$ such that 
\begin{equation*}
Supp(\eta) \subset \left\{\lambda \in \R : \frac{1}{2} < |\lambda| < 2\right\}\:\:\text{ and } \sum_{k=-\infty}^\infty \eta\left(2^{-k}\lambda\right)=1\:,\:\text{ for } \lambda \ne 0\:.
\end{equation*}
Then setting,
\begin{equation*}
\eta_1(\lambda):=\sum_{k=-\infty}^0 \eta\left(2^{-k}\lambda\right)\:,\:\:\text{ and } \eta_2(\lambda):=\sum_{k=1}^\infty \eta\left(2^{-k}\lambda\right)\:,
\end{equation*}
we consider the decomposition,
\begin{equation*}
Ug(\lambda)= U_1g(\lambda)\:+\:U_2g(\lambda)\:,\:\lambda \in \R,
\end{equation*}
where for $j=1,2$,
\begin{eqnarray*}
U_jg(\lambda):=\frac{\eta_j(\lambda)}{(1+\lambda^2)^{s/2}} \int_{r_1}^{r_2} e^{i\left(\lambda r+t(r)|\lambda|^a\right)}\:g(r)\:d\mu_0(r)\:,\:\:\lambda \in \R\:.
\end{eqnarray*}

We trivially have that
\begin{equation*}
\|U_1g\|_{L^\infty(\R)} \lesssim \|g\|_{L^\infty(d\mu_0)}\:.
\end{equation*}
Now as $Supp(U_1g) \subset (-2,2)$, the above estimate yields 
\begin{equation}\label{concave_pf_eq7}
\|U_1g\|_{L^2(\R)} \lesssim \|g\|_{L^\infty(d\mu_0)}\:.
\end{equation}
Next, to estimate $U_2g$, we start by noting that
\begin{equation*}
\|U_2g\|^2_{L^2(\R)} \lesssim \sum_{k=1}^\infty \|\eta(2^{-k}\cdot)\:Ug\|^2_{L^2(\R)}\:,
\end{equation*}
and then by Fubini's theorem, for each $k \ge 1$,
\begin{eqnarray} \label{sum_estimate}
&& \|\eta(2^{-k}\cdot)\:Ug\|^2_{L^2(\R)} \nonumber\\
&= &\int_{\R}\frac{\eta\left(2^{-k}\lambda\right)^2}{(1+\lambda^2)^s} \int_{r_1}^{r_2}e^{i\left(\lambda r + t(r) |\lambda|^a\right)}\:g(r)\:d\mu_0(r) \int_{r_1}^{r_2}e^{-i\left(\lambda r' + t(r') |\lambda|^a\right)}\:\overline{g(r')}\:d\mu_0(r')\:d\lambda \nonumber\\
\:\:\:\:\:\:\:&\lesssim & \left(2^k\right)^{1-2s} \int_{r_1}^{r_2} \int_{r_1}^{r_2} |g(r)|\:|g(r')|\:|I_{k}(r,r')|\:dr\:dr'\:, 
\end{eqnarray}
where
\begin{equation*}
I_k(r,r'):= \int_\R \eta(\lambda)^2\:e^{i\left(2^k\lambda(r-r')\:+\:2^{ka}|\lambda|^a(t(r)-t(r'))\right)}\:d\lambda\:.
\end{equation*}
For notational convenience, setting $y:=r-r'$ and $t:=t(r)-t(r')$, we estimate the above oscillatory integral by considering the cases:
\begin{equation*}
|y| \le 2^{-\frac{2ks_*}{\alpha_0}}\:,\:\:\text{ or } |y| > 2^{-\frac{2ks_*}{\alpha_0}}\:,
\end{equation*} 
where 
\begin{equation*}
s_*:=\min\left\{\frac{a}{4}+\frac{(1-a)\alpha_0}{2}\:,\: \frac{\alpha_0}{2}\right\}\:,
\end{equation*}
that is, $s=1/2-s_*+\varepsilon$, for any $\varepsilon>0$. In the former case, it follows trivially that
\begin{equation}\label{concave_pf_eq8}
\left|\int_\R \eta(\lambda)^2\:e^{i\left(2^k\lambda y\:+\:2^{ka}|\lambda|^a t)\right)}\:d\lambda\right| \lesssim 1\:.
\end{equation}
We now focus when $|y| > 2^{-\frac{2ks_*}{\alpha_0}}$. In particular, we note that $2^k|y| \ge 1$. For notaional converience, writing, $$\phi(\lambda):= 2^k\lambda y\:+\:2^{ka}|\lambda|^a t\:,$$ we decompose the integral as follows,
\begin{equation*}
\int_{\R} e^{i\phi(\lambda)}\:\eta(\lambda)^2\:d\lambda = \int_{\Omega_1} e^{i\phi(\lambda)}\:\eta(\lambda)^2\:d\lambda \:+\: \int_{\Omega_2} e^{i\phi(\lambda)}\:\eta(\lambda)^2\:d\lambda=: I^{(1)}_k\:+\:I^{(2)}_k\:,
\end{equation*}
where
\begin{eqnarray*}
\Omega_1&:=& \left\{\frac{1}{2} < |\lambda| <2 : 2^{1+ka} |t||\lambda|^{a-1} \ge 2^{4ks_*}|y|^{2\alpha_0}\right\}\:,\\
\Omega_2&:=& \left\{\frac{1}{2} < |\lambda| <2 : 2^{1+ka} |t||\lambda|^{a-1} < 2^{4ks_*}|y|^{2\alpha_0}\right\}\:.
\end{eqnarray*}
We first focus on $I^{(1)}_k$. In this case, since 
\begin{equation*}
\left|\frac{d^2}{d\lambda^2}\phi(\lambda)\right| \gtrsim 2^{4ks_*}|y|^{2\alpha_0}\:,
\end{equation*}
by van der Corput's lemma, we have
\begin{equation}\label{concave_pf_eq9}
\left|I^{(1)}_k\right| \lesssim \left(2^{4ks_*}|y|^{2\alpha_0}\right)^{-\frac{1}{2}} = \left(2^k\right)^{-2s_*}\:|y|^{-\alpha_0}\:.
\end{equation} 
For $I^{(2)}_k$, by the defining condition of $\Omega_2$ and the fact that $t \in \I$, it follows that
\begin{equation*}
2^{ka}\:|t|^{\max\left\{1,2\alpha_0\right\}}\:|\lambda|^{a-1} \le 2^{4ks_*-1}\:|y|^{2\alpha_0}\:.
\end{equation*} 
We also note that
\begin{equation*}
2^{ka}\left(\left(2^k\right)^{4s_*-a}\:|y|^{2\alpha_0}\right)^{\left(\max\left\{1,2\alpha_0\right\}\right)^{-1}} = \left(2^k|y|\right)^{\min\left\{2\alpha_0,1\right\}} \le 2^k|y|\:.
\end{equation*}
Hence, it follows that
\begin{eqnarray*}
\left|\frac{d}{d\lambda}\phi(\lambda)\right| \gtrsim 2^k|y|-a2^{ka}|t||\lambda|^{a-1} &\gtrsim & 2^k|y|-2^{ka}\left(\left(2^k\right)^{4s_*-a}\:|y|^{2\alpha_0}\right)^{\left(\max\left\{1,2\alpha_0\right\}\right)^{-1}} \\
&\gtrsim&  2^k|y|\:,
\end{eqnarray*}
which by van der Corput's lemma yields,
\begin{equation}\label{concave_pf_eq10}
\left|I^{(2)}_k\right| \lesssim \left(2^{k}|y|\right)^{-1} \le \left(2^{k}|y|\right)^{-\alpha_0} \le \left(2^k\right)^{-2s_*}\:|y|^{-\alpha_0}\:.
\end{equation} 
Thus by (\ref{concave_pf_eq8})-(\ref{concave_pf_eq10}), we have
\begin{equation*}
\left|I_k(r,r')\right| \lesssim \chi_{\left\{|r-r'| \le 2^{-\frac{2ks_*}{\alpha_0}}\right\}}(r-r')\:+\: \left(2^k\right)^{-2s_*}\:|r-r'|^{-\alpha_0}\chi_{\left\{|r-r'| > 2^{-\frac{2ks_*}{\alpha_0}}\right\}}(r-r')\:.
\end{equation*}
Thus by Lemma \ref{CS_lemma}, it follows that
\begin{equation*}
\int_{r_1}^{r_2} \int_{r_1}^{r_2} |g(r)|\:|g(r')|\:|I_{k}(r,r')|\:dr\:dr' \lesssim \left(2^k\right)^{-2s_*}\:\|g\|^2_{L^2(d\mu_0)}\:,
\end{equation*}
which upon plugging in (\ref{sum_estimate}) yields for $k \ge 1$\:,
\begin{equation} \label{concave_pf_eq11}
\|\eta(2^{-k}\cdot)\:Ug\|^2_{L^2(\R)} \lesssim \left(2^k\right)^{1-2(s+s_*)}\|g\|^2_{L^2(d\mu_0)} = 2^{-2k\varepsilon}\|g\|^2_{L^2(d\mu_0)} \:.
\end{equation}
Therefore by (\ref{concave_pf_eq7}) and (\ref{concave_pf_eq11}), we get (\ref{concave_pf_eq6}) as,
\begin{eqnarray*}
\|Ug\|_{L^2(\R)} &\le & \|U_1g\|_{L^2(\R)}\:+\:\|U_2g\|_{L^2(\R)} \\ &\lesssim & \|g\|_{L^\infty(d\mu_0)}\:+\:\left(\sum_{k=1}^\infty 2^{-2k\varepsilon}\right)^\frac{1}{2}  \|g\|_{L^2(d\mu_0)} \\
&\lesssim & \|g\|_{L^\infty(d\mu_0)}\:,
\end{eqnarray*}
which along with (\ref{concave_pf_eq5}) yields the desired estimate (\ref{concave_pf_eq4}). Therefore we have proved (\ref{concave_pf_eq1}). 

Then by the method of maximal functions, density of $\Ss(\R^n)$, the fractal Sobolev embedding for the initial data (Lemma \ref{Annulus-Sobolev-embedding}) and Frostmann's lemma, for $a \in (0,1)$, all radial $f \in H^s(\R^n),\:s \in (a/4,1/2)$ and 
\begin{equation} \label{alpha_range}
\alpha > \max \left\{n-\frac{1}{2}\:+\:\frac{1-4s}{2(1-a)}\:,\: n-2s\right\}\:,
\end{equation}
we have
\begin{equation*}
\mathcal{H}^\alpha \left\{x \in \mathbb{A}(r_1,r_2) : \displaystyle\lim_{t \to 0} e^{it(-\Delta)^{a/2}}f(x) \ne f(x)\right\}=0\:.
\end{equation*}
Then by countable additivity of Hausdorff measures, we get that
\begin{equation*}
\mathcal{H}^\alpha \left\{x \in \R^n : \displaystyle\lim_{t \to 0} e^{it(-\Delta)^{a/2}}f(x) \ne f(x)\right\}=0\:,
\end{equation*}
for $\alpha$ satisfying (\ref{alpha_range}). Therefore, we get 
\begin{equation*}
\wtilde{\fd}_{\psi,n}(s) \le \max \left\{n-\frac{1}{2}\:+\:\frac{1-4s}{2(1-a)}\:,\: n-2s\right\}\:,\:\:s \in (a/4,1/2)\:,
\end{equation*}
which is also sharp in the range $s \in [1/4,1/2)$ by Remark \ref{BR_remark}. Remark \ref{BR_remark} also yields the lower bound $\wtilde{\fd}_{\psi,n}(s) \ge n-2s$ for the range $s \in [a/4,1/4)$.

For the range $s>1/2$, the result follows by the above arguments (of the last paragraph) combined with Lemma \ref{lemma_s>1/2}. This completes the proof of part (1) of Theorem \ref{radial_concave_thm}.

\medskip

For part (2) of Theorem \ref{radial_concave_thm}, the result in the range $s \in (0,1/2]$ follows from Walther's work in \cite{Walther2}, whereas for the range $s>1/2$, the result follows by the above arguments combined with Lemma \ref{lemma_s>1/2}. This completes the proof of Theorem \ref{radial_concave_thm}.
\end{proof}

\subsection{Transference principle and applications} \label{subsec6.3}
We now obtain results for families of abstract dispersive PDEs with radial initial data, which can be viewed as natural variations of the results presented in Section \ref{sec5}. The key to this analysis, is again the transference principle:
\begin{proposition} \label{radial_transference_proposition}
The transference principle stated in Theorem \ref{transference_theorem} for dispersive equations with phases of comparable oscillation, continues to hold when restricted to the class of radial initial data.
\end{proposition}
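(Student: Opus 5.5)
The plan is to rerun the three-step proof of Theorem \ref{transference_theorem} inside the closed subspace $H^{s}_{rad}(\R^n)$ of radial elements of $H^s(\R^n)$, working on annuli $\mathbb{A}(r_1,r_2)$ instead of $\mathbb{B}(0,1)$. Since the origin is a single point, it does not affect Hausdorff dimensions. We therefore localize to annuli, which is also where Lemmas \ref{N-maximal-estimate-radial} and \ref{Annulus-Sobolev-embedding} are available. The target is $\wtilde{\fd}_{\psi_2,n}(s_0)\le\wtilde{\fd}_{\psi_1,n}(s_0)$ for every $s_0>0$, and symmetry then gives equality.

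\textbf{Step 1.} Fix $s_0>0$, $\varepsilon>0$, and $\alpha=\wtilde{\fd}_{\psi_1,n}(s_0)+\varepsilon$. By Frostmann's lemma, every $\mu\in\M^\alpha(\mathbb{A}(r_1,r_2))$ gives $\mu(\D_{\psi_1}(f))=0$ for all radial $f\in H^{s_0}(\R^n)$. The propagator $e^{it_0\psi_1(\sqrt{-\Delta})}$ commutes with rotations, so it preserves radiality and is unitary on $H^{s_0}_{rad}$. Hence the group-property argument goes through verbatim, with the fixed-time embedding now supplied by Lemma \ref{Annulus-Sobolev-embedding}. This yields $S^*_{\psi_1}f<\infty$ $\mu$-a.e. for every $f\in H^{s_0}_{rad}$. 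Next we apply Nikishin theory \cite{Rubio} to the family of operators $S^*_{\psi_1}$ on the Hilbert (hence Banach) space $H^{s_0}_{rad}(\R^n)$; the theory needs only a Banach space domain and a sublinear operator that is continuous in measure. This produces a weight $0<w\le1$ with
\[
\|S^*_{\psi_1}f\|_{L^{2,\infty}(w\,d\mu)}\lesssim\|f\|_{H^{s_0}}\quad\text{for } f\in H^{s_0}_{rad}.
\]
I expect this to be the main point to check. Continuity in measure follows from the finiteness a.e., via the standard Banach principle argument on a Banach space, and radiality never enters.

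\textbf{Step 2.} Take a radial Schwartz function $f$ and split it as $\what f=\what f\eta_1+\what f\eta_2$ with the same radial cutoffs as before, so that $f_1,f_2$ are radial. The low-frequency piece $f_1$ is bounded pointwise by Cauchy--Schwarz and Plancherel, exactly as before. For the high-frequency piece $f_2$, expand $e^{it(\psi_2-\psi_1)}$ in a Taylor series. The resulting $g_j$ satisfy $\what g_j=(\psi_2-\psi_1)^j\what f_2$, which is a radial multiplier applied to a radial function, so every $g_j$ is radial. We may therefore apply the radial maximal estimate from Step 1 to each $g_j$, and the bound $\|g_j\|_{H^{s_0}}\le C^j\|f_2\|_{H^{s_0}}$ lets us sum over $j$. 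A density argument, using that radial Schwartz functions are dense in $H^{s_0}_{rad}$ (for example by radial mollification), then gives the weak maximal estimate for $S^*_{\psi_2}$ on $H^{s_0}_{rad}$.

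\textbf{Step 3.} The method of maximal functions, combined with Lemma \ref{Annulus-Sobolev-embedding} for the initial data and Frostmann's lemma, shows that
\[
\mathcal{H}^\alpha\bigl(\D_{\psi_2}(f)\cap\mathbb{A}(r_1,r_2)\bigr)=0 .
\]
Translation invariance is unavailable here, so instead we exhaust $\R^n\setminus\{0\}$ by countably many annuli with $r_1=1/m$, $r_2=m$ and use countable additivity. This gives $\wtilde{\fd}_{\psi_2,n}(s_0)\le\alpha$. Letting $\varepsilon\to0$ and then interchanging $\psi_1$ and $\psi_2$ completes the proof.
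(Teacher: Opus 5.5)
Your scheme is the paper's: work in the radial subspace, replace $\mathbb{B}(0,1)$ by annuli, use radial cutoffs so that $f_1,f_2,g_j$ stay radial, and exhaust $\R^n\setminus\{0\}$ by countably many annuli. There is, however, a gap at the one place where the radial setting really differs from the Euclidean one. The main point to check is not Nikishin's theorem but the hypothesis $\alpha>n-2s_0$, where $\alpha=\wtilde{\fd}_{\psi_1,n}(s_0)+\varepsilon$.

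Several steps need this hypothesis:
\begin{itemize}
\item every use of Lemma \ref{Annulus-Sobolev-embedding}, namely the fixed-time finiteness of $e^{iq\psi_1(\sqrt{-\Delta})}f$ and the continuity into $L^0(\mu)$ that the Banach principle and Nikishin need in Step 1, and the identification of the initial datum with its canonical representative in Step 3;
\item Lemma \ref{N-maximal-estimate-radial}, which is what makes $\D_{\psi_1}(f)$ meaningful $\mu$-a.e.
\end{itemize}
In the proof of Theorem \ref{transference_theorem} this was automatic, since $\fd_{\psi_1,n}(s_0)\ge n-2s_0$ by Remark \ref{Zubrinic_remark}. In the radial setting that lower bound holds only for $s_0<1/2$, by Remark \ref{BR_remark}, which you must cite. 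For $s_0\in(1/2,n/2)$ it is false. There Lemma \ref{lemma_s>1/2} gives $\wtilde{\fd}_{\psi_1,n}(s_0)=0$, so $\alpha=\varepsilon<n-2s_0$, and Lemma \ref{Annulus-Sobolev-embedding} is simply unavailable; your ``verbatim'' Step 1 and Step 3 break down. Enlarging $\alpha$ to $n-2s_0+\varepsilon$ does not rescue the argument, because it yields only $\wtilde{\fd}_{\psi_2,n}(s_0)\le n-2s_0$, not $0$.

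The missing ingredient is Lemma \ref{lemma_s>1/2}, which is why the paper's proof lists it alongside Lemma \ref{Annulus-Sobolev-embedding} and Remark \ref{BR_remark}. Its maximal bound holds for every phase and every $\alpha>0$. Hence for $s_0>1/2$ both $\wtilde{\fd}_{\psi_1,n}(s_0)$ and $\wtilde{\fd}_{\psi_2,n}(s_0)$ vanish and there is nothing to transfer. For $s_0<1/2$, Remark \ref{BR_remark} guarantees $\alpha>n-2s_0$, and your argument then goes through as written.

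The endpoint $s_0=1/2$ is covered by neither ingredient, and the paper's one-line proof is silent there as well. Running your argument at that endpoint would additionally require the lower bound $\wtilde{\fd}_{\psi_1,n}(1/2)\ge n-1$.
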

\begin{proof}
The proof of Theorem \ref{transference_theorem} can be carried out verbatim (while incoroporating Lemmata \ref{Annulus-Sobolev-embedding}, \ref{lemma_s>1/2} and Remark \ref{BR_remark}), except that working on $\mathbb{B}(0,1)$ and using the translation invariance of Hausdorff measures, we work on each annuli $\mathbb{A}(r_1,r_2)$ for $0<r_1<r_2<\infty$\:. 
\end{proof}

\begin{corollary} \label{radial_corollary1}
For a dispersive equation with asymptotically convex phase $\psi$ on $\R^n,\:n\ge 2$, i.e. 
\begin{equation*}
\psi(\|\xi\|)=\|\xi\|^a \:+\:\mathcal{O}(1)\:,\:\:\|\xi\| \gg 1\:,\:\:a>1\:,
\end{equation*} we have for radial initial data,
\begin{equation*} 
\wtilde{\fd}_{\psi,n}(s) = \begin{cases}
n\:,\:\:& s<\frac{1}{4} \\
n-2s\:,\:\:& \frac{1}{4} \le s < \frac{1}{2} \\
0\:,\:\:& \frac{1}{2} < s\:. 
\end{cases}
\end{equation*}
\end{corollary}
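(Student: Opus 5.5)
The plan is to deduce Corollary \ref{radial_corollary1} from the radial transference principle, Proposition \ref{radial_transference_proposition}, together with the known result (\ref{BR_estimates}) of Bennett--Rogers for fractional Schr\"odinger equations with convex phase and radial data.

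\textbf{Step 1.} Fix $a>1$ and the model phase $\psi_a(\lambda):=\lambda^a$. The hypothesis $\psi(\|\xi\|)=\|\xi\|^a+\mathcal{O}(1)$ for $\|\xi\|\gg 1$ gives $\Lambda>0$ and $C>0$ with $|\psi(\|\xi\|)-\psi_a(\|\xi\|)|\le C$ for all $\|\xi\|>\Lambda$. Hence the two equations are of comparable oscillation in the sense of Definition \ref{defn}.

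\textbf{Step 2.} Proposition \ref{radial_transference_proposition} then gives $\wtilde{\fd}_{\psi,n}(s)=\wtilde{\fd}_{\psi_a,n}(s)$ for all $s>0$.

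\textbf{Step 3.} Insert (\ref{BR_estimates}). This yields the value $n$ for $s<1/4$, $n-2s$ for $1/4\le s<1/2$, and $0$ for $s>1/2$.

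\textbf{Step 4.} Check that the transference step is legitimate in each regime. Step 1 of the proof of Theorem \ref{transference_theorem} needs $\alpha=\wtilde{\fd}_{\psi_a,n}(s_0)+\varepsilon$ to satisfy $\alpha>n-2s_0$, so that the fixed-time fractal Sobolev embedding on annuli (Lemma \ref{Annulus-Sobolev-embedding}) applies.
\begin{itemize}
\item For $s_0<1/2$ this holds, since $\wtilde{\fd}_{\psi_a,n}(s_0)\ge n-2s_0$ by (\ref{BR_estimates}), consistent with Remark \ref{BR_remark}.
\item For $s_0>1/2$ the value is $0$, while $n-2s_0$ may still be positive, so Lemma \ref{Annulus-Sobolev-embedding} does not cover the required range of $\alpha$. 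Here I would bypass the transference entirely. Lemma \ref{lemma_s>1/2} gives an $L^1(d\mu)$ maximal estimate for every phase and every $\alpha>0$. Applying the method of maximal functions to $\psi$ directly, exactly as at the end of the proof of Theorem \ref{radial_concave_thm}, then gives $\wtilde{\fd}_{\psi,n}(s)=0$ for $s>1/2$.
\end{itemize}

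\textbf{Main obstacle.} The only delicate point is verifying that the verbatim adaptation in Proposition \ref{radial_transference_proposition} is valid on annuli $\mathbb{A}(r_1,r_2)$ rather than on balls. Two facts support this.
\begin{itemize}
\item Radiality is preserved by the frequency splitting $f=f_1+f_2$, because $\eta_1$ and $\eta_2$ are radial.
\item The auxiliary functions $g_j$ are radial, since $\psi_1-\psi_2$ is radial.
\end{itemize}
The Nikishin step applies to the radial subspace, which is a closed subspace of $H^{s_0}$. Finally, countable additivity over annuli, together with the origin being a null set for $\mathcal{H}^\alpha$ when $\alpha>0$, recovers the statement on all of $\R^n$.
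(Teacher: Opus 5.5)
Your proposal is correct and is essentially the paper's proof: the paper also deduces the corollary in one line from Proposition~\ref{radial_transference_proposition}, applied to $\psi$ and the model phase $\lambda^a$ (which are of comparable oscillation), together with (\ref{BR_estimates}). Your separate treatment of $s>1/2$ via Lemma~\ref{lemma_s>1/2} is exactly what the paper means by ``incorporating'' that lemma in the proof of Proposition~\ref{radial_transference_proposition}; for the reverse direction of the transference, needed for the lower bound $n$ when $s<1/4$, the condition $\wtilde{\fd}_{\psi,n}(s_0)\ge n-2s_0$ holds by Remark~\ref{BR_remark}, which does not depend on the phase.
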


\begin{remark} \label{radial_beam_remark}
Corollary \ref{radial_corollary1} recovers the result for the Beam propagation \cite[Theorem 2]{LLN} and provides new result for the linearized Boussinesq propagation, for radial initial data. 
\end{remark}

\begin{proof}[Proof of Corollary \ref{radial_corollary1}]
The result follows at once by the result for the fractional Schr\"odinger equations with convex phase given by (\ref{BR_estimates}) and the transference principle Proposition \ref{radial_transference_proposition}. 
\end{proof}

\begin{corollary} \label{radial_corollary2}
For a dispersive equation with asymptotically concave phase $\psi$ on $\R^n,\:n\ge 2$, i.e. 
\begin{equation*}
\psi(\|\xi\|)=\|\xi\|^a \:+\:\mathcal{O}(1)\:,\:\:\|\xi\| \gg 1\:,\:\:a \in (0,1)\:,
\end{equation*}
we have for radial initial data,
\begin{enumerate}
\item the sharp result in the range,
\begin{equation*}
\wtilde{\fd}_{\psi,n}(s) =\begin{cases}
n\:,\:\: &s<\frac{a}{4}\\
n-2s\:,\:\: &\frac{1}{4} \le s < \frac{1}{2}\\
0\:,\:\: &\frac{1}{2} < s\:;
\end{cases}
\end{equation*}
\item and in the remaining range, \\
the upper bound,
\begin{equation*}
\wtilde{\fd}_{\psi,n}(s) \le n-\frac{1}{2}+\frac{1-4s}{2(1-a)}\:,\:\: \frac{a}{4} \le s\le \frac{1}{4}\:;
\end{equation*}
and the lower bound,
\begin{equation*}
\wtilde{\fd}_{\psi,n}(s) \ge n-2s\:,\:\: \frac{a}{4} \le s \le \frac{1}{4}\:.
\end{equation*}
\end{enumerate}
\end{corollary}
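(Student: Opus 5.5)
The plan is to transfer the conclusions of part (1) of Theorem \ref{radial_concave_thm} from the model phase $\psi_0(\lambda):=\lambda^a$, $a\in(0,1)$, to an arbitrary asymptotically concave phase $\psi$. The tool is the radial transference principle, Proposition \ref{radial_transference_proposition}, mirroring the proof of Corollary \ref{radial_corollary1}.

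First, the hypothesis $\psi(\|\xi\|)=\|\xi\|^a+\mathcal{O}(1)$ for $\|\xi\|\gg 1$ means exactly that there exist $\Lambda>0$ and $C>0$ with $|\psi(\|\xi\|)-\psi_0(\|\xi\|)|\le C$ for $\|\xi\|>\Lambda$. In other words, the two dispersive equations are of comparable oscillation in the sense of Definition \ref{defn}. Proposition \ref{radial_transference_proposition} then yields
\begin{equation*}
\wtilde{\fd}_{\psi,n}(s)=\wtilde{\fd}_{\psi_0,n}(s)\:,\:\:\text{ for all } s>0\:.
\end{equation*}
From this identity we read off every assertion. The sharp values $n$ for $s<a/4$, $n-2s$ for $1/4\le s<1/2$, and $0$ for $s>1/2$ come from part (1)(a) of Theorem \ref{radial_concave_thm}. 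The upper bound $n-\frac12+\frac{1-4s}{2(1-a)}$ and the lower bound $n-2s$ on $[a/4,1/4]$ come from part (1)(b).

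One point needs care. Step 2 of the transference argument expands $e^{it(\psi_2-\psi_1)}$ in a Taylor series and applies the Nikishin-type weak maximal estimate for $\psi_1$ to the functions $g_j$. For this to work in the radial setting, the $g_j$ must themselves be radial. They are, since $\what{g}_j=(\psi_2-\psi_1)^j\what{f}_2$ and both the phase difference and $\what{f}_2$ are radial. Likewise the low-frequency piece $f_1$ is radial, because the cutoff $\eta$ is chosen radial.

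The only other point is the endpoint $s=1/2$, which is excluded from the statement just as it is in Theorem \ref{radial_concave_thm}. So no further input is needed there. I expect no genuine obstacle: the whole argument is a direct invocation of Proposition \ref{radial_transference_proposition} together with Theorem \ref{radial_concave_thm}.
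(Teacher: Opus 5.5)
Your proposal is correct and matches the paper's proof: the paper also derives the corollary by applying the radial transference principle, Proposition \ref{radial_transference_proposition}, with the model phase $\psi_0(\lambda)=\lambda^a$ and reading off every case from part (1) of Theorem \ref{radial_concave_thm}. Your extra check that $f_2$ and the $g_j$ stay radial is exactly what the paper's ``verbatim'' proof of Proposition \ref{radial_transference_proposition} tacitly needs, since the Nikishin weak-type estimate is obtained only on the radial subspace of $H^{s_0}(\R^n)$.
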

\begin{proof}
The result follows by combining the result for the fractional Schr\"odinger equations with concave phase given by part (1) of Theorem \ref{radial_concave_thm} and the transference principle Proposition \ref{radial_transference_proposition}. 
\end{proof}

\begin{corollary} \label{radial_corollary3}
For a dispersive equation with asymptotically linear phase $\psi$ on $\R^n,\:n\ge 2,$ i.e. 
\begin{equation*}
\psi(\|\xi\|)=\|\xi\| \:+\:\mathcal{O}(1)\:,\:\:\|\xi\| \gg 1\:,
\end{equation*}
we have for radial initial data,
\begin{equation*} 
\wtilde{\fd}_{\psi,n}(s)  =\begin{cases}
n\:,\:\: &s \le \frac{1}{2}\\
0\:,\:\: &\frac{1}{2} < s\:.
\end{cases}
\end{equation*}
\end{corollary}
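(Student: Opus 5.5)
The plan is to reduce Corollary \ref{radial_corollary3} to the Wave propagation itself, $\psi_0(\lambda)=\lambda$, via the radial transference principle. By hypothesis there are $\Lambda>0$ and $C>0$ with $|\psi(\|\xi\|)-\|\xi\||\le C$ for $\|\xi\|>\Lambda$. This is exactly the statement that the dispersive equation with phase $\psi$ and the Wave equation (the case $a=1$ of (\ref{frac_schrodinger})) are of comparable oscillation in the sense of Definition \ref{defn}. Proposition \ref{radial_transference_proposition} then gives $\wtilde{\fd}_{\psi,n}(s)=\wtilde{\fd}_{\psi_0,n}(s)$ for all $s>0$.

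For the Wave propagation with radial data, part (2) of Theorem \ref{radial_concave_thm} states that $\wtilde{\fd}_{\psi_0,n}(s)=n$ for $s\le 1/2$ and $0$ for $s>1/2$. Substituting this into the identity above yields the claimed formula.

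The only point that needs care is whether Proposition \ref{radial_transference_proposition} applies in both regimes, since its proof runs through the Nikishin-type weighted weak maximal estimate obtained from $\wtilde{\fd}_{\psi_0,n}(s_0)+\varepsilon$.

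In the regime $s>1/2$ the value is $0$. Here I would check that the radial proof still goes through with $\alpha=\varepsilon$ arbitrarily small. The ingredients are Lemma \ref{Annulus-Sobolev-embedding}, which needs $\alpha>n-2s$ and holds for $s\ge n/2$, and, for $n-2s\ge\alpha$, Lemma \ref{lemma_s>1/2}. Lemma \ref{lemma_s>1/2} is valid for every phase and every $\alpha>0$, so it provides both the fixed-time fractal embedding on annuli and the maximal estimate. Indeed, this case could be done directly for $\psi$ without any transference, by the method of maximal functions as at the end of the proof of Theorem \ref{radial_concave_thm}.

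In the regime $s\le 1/2$ the value $n$ is the trivial upper bound. So the real content is the lower bound $\wtilde{\fd}_{\psi,n}(s)\ge n$. Transference in the direction $\wtilde{\fd}_{\psi_0,n}\le\wtilde{\fd}_{\psi,n}$ supplies it: if $\wtilde{\fd}_{\psi,n}(s_0)<n$, the Step 1 and Step 2 argument with roles swapped would give $\wtilde{\fd}_{\psi_0,n}(s_0)<n$, contradicting Walther's result. I expect this symmetric use of the proposition to be the main obstacle. I would check that Step 1 only requires $\alpha<n$ and the annular fractal Sobolev embedding, which is available since $\alpha>n-2s$ can be chosen.
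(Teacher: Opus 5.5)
Your proposal is correct and follows the paper's own proof. The hypothesis makes $\psi$ of comparable oscillation with the wave phase, so Proposition \ref{radial_transference_proposition} combined with part (2) of Theorem \ref{radial_concave_thm} gives the result. Your extra checks are consistent with the ingredients the paper lists for that proposition: the $s>1/2$ case follows directly for any phase from Lemma \ref{lemma_s>1/2}, and the lower bound $n$ for $s\le 1/2$ comes from the transference run in the reverse direction.
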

\begin{remark} \label{radial_klein_remark}
As special cases, Corollary \ref{radial_corollary3} provides new results for the family of Klein-Gordon propagations.
\end{remark}

\begin{proof}[Proof of Corollary \ref{radial_corollary3}]
The result follows at once by the result for the Wave propagation given by part (2) of Theorem \ref{radial_concave_thm} and the transference principle Proposition \ref{radial_transference_proposition}. 
\end{proof}

The results entailed in Corollaries \ref{radial_corollary1}, \ref{radial_corollary2} and \ref{radial_corollary3} are presented in Figure \ref{radialfigure2}.
\begin{figure}[htbp]
  \centering
  
  \begin{subfigure}{0.3\textwidth}
    \centering
    \begin{tikzpicture}
\draw [->, thick] (0,2) -- (2,2) node [below left] {$s$};
\draw [->, thick] (0,2) -- (0,7)  node [below left] {$\wtilde{\fd}_{\psi,n}(s)$};
\draw[red] [-, thick] (0,6) -- (0.5,6);
\draw[red][thick] plot[domain=0.5:1, smooth] (\x,{6-2*\x});
\draw[red] [-, thick] (1,2) -- (2,2);
\draw [fill] (1,2) circle [radius=.05] node [below] {\scriptsize$\frac{1}{2}$};
\draw [fill] (0.5,2) circle [radius=.05] node [below] {\scriptsize$\frac{1}{4}$};
\draw [fill] (0,6) circle [radius=.05] node [left] {\scriptsize$n$};
\draw [fill] (0,5) circle [radius=.05] node [left] {\scriptsize$n-\frac{1}{2}$};
\draw [fill] (0,4) circle [radius=.05] node [left] {\scriptsize$n-1$};
\draw[gray] [-, dashed] (1,2) -- (1,4);
\draw[gray] [-, dashed] (0.5,2) -- (0.5,6);
\draw[gray] [-, dashed] (0,4) -- (1,4);
\draw[gray] [-, dashed] (0,5) -- (0.5,5);
\end{tikzpicture}
    \caption{asymptotically convex phase}
    \label{radialfigure2.1}
  \end{subfigure}
  \hfill 
  \begin{subfigure}{0.3\textwidth}
    \centering
    \begin{tikzpicture}
\draw [->, thick] (0,2) -- (2,2) node [below left] {$s$};
\draw [->, thick] (0,2) -- (0,7)  node [below left] {$\wtilde{\fd}_{\psi,n}(s)$};
\draw[red] [-, thick] (0,6) -- (0.25,6);
\draw[red][thick] plot[domain=0.5:1, smooth] (\x,{6-2*\x});
\draw[red] [-, thick] (1,2) -- (2,2);
\draw[cyan][thick]plot[domain=0.25:0.5, smooth] (\x,{7-4*\x});
\draw[green][thick]plot[domain=0.25:0.5, smooth] (\x,{6-2*\x});
\draw [fill] (1,2) circle [radius=.05] node [below] {\scriptsize$\frac{1}{2}$};
\draw [fill] (0.5,2) circle [radius=.05] node [below] {\scriptsize$\frac{1}{4}$};
\draw [fill] (0.25,2) circle [radius=.05] node [below] {\scriptsize$\frac{a}{4}$};
\draw [fill] (0,6) circle [radius=.05] node [left] {\scriptsize$n$};
\draw [fill] (0,4) circle [radius=.05] node [left] {\scriptsize$n-1$};
\draw [fill] (0,5) circle [radius=.05] node [left] {\scriptsize$n-\frac{1}{2}$};
\draw [fill] (0,5.5) circle [radius=.05] node [left] {\scriptsize$n-\frac{a}{2}$};
\draw[gray] [-, dashed] (0.25,2) -- (0.25,6);
\draw[gray] [-, dashed] (0,5.5) -- (0.25,5.5);
\draw[gray] [-, dashed] (0.5,2) -- (0.5,5);
\draw[gray] [-, dashed] (1,2) -- (1,4);
\draw[gray] [-, dashed] (0,4) -- (1,4);
\draw[gray] [-, dashed] (0,5) -- (0.5,5);
\draw[red] [-, thick] (1.5,6) -- (1.7,6);
\pgftext[at={\pgfpoint{3cm}{6cm}}]{\textcolor{red}{sharp result}};
\draw[cyan] [-, thick] (1.5,5.5) -- (1.7,5.5);
\pgftext[at={\pgfpoint{3cm}{5.5cm}}]{\textcolor{cyan}{upper bound}};
\draw[cyan] [-, thick] (1.5,5) -- (1.7,5);
\pgftext[at={\pgfpoint{3cm}{5cm}}]{\textcolor{green}{lower bound}};	
\end{tikzpicture}
    \caption{asymptotically concave phase}
    \label{radialfigure2.2}
  \end{subfigure}
  \hfill 
  \begin{subfigure}{0.3\textwidth}
    \centering
    \begin{tikzpicture}
\draw [->, thick] (0,2) -- (2,2) node [below left] {$s$};
\draw [->, thick] (0,2) -- (0,7)  node [below left] {$\wtilde{\fd}_{\psi,n}(s)$};
\draw[red] [-, thick] (0,6) -- (1,6);
\draw[red] [-, thick] (1,2) -- (2,2);
\draw [fill] (1,2) circle [radius=.05] node [below] {\scriptsize$\frac{1}{2}$};
\draw [fill] (0,6) circle [radius=.05] node [left] {\scriptsize$n$};
\draw[gray] [-, dashed] (1,2) -- (1,6);
\end{tikzpicture}
    \caption{asymptotically linear phase}
    \label{radialfigure2.3}
  \end{subfigure}
  \caption{$\wtilde{\fd}_{\psi,n}$ for general classes of dispersive PDEs}
  \label{radialfigure2}
\end{figure}
\section{Results on the $3$-dimensional hyperbolic space}
\label{sec7}
In this section, we study the divergence sets of dispersive equations on $\Hb^3$, the three-dimensional real hyperbolic space. Besides being of constant negative sectional curvature, $\Hb^3$ also enjoys the homogeneous structure of a Riemannian symmetric space. We now recall several standard notions in Harmonic Analysis on Riemannian symmetric spaces for $\Hb^3$. For unexplained terminologies and more details, we refer to \cite{Helgason}.

Let $G=SL(2,\C)$ and $K$ be its maximal compact subgroup $SU(2)$. We then have the identification,
\begin{equation*}
G/K \cong \mathbb{H}^3\:,
\end{equation*}
with constant sectional curvature $-1$. This identification allows one to define convolution on $\Hb^3$ via $G$. The $G$-invariant Riemannian measure on $G/K$ is $\sinh^2 r \: dr \: dk$, where $dr$ and $dk$ are the Lebesgue measure on $(0,\infty)$ and the Haar measure on $SU(2)$ respectively. Thus the volumes of geodesic balls satisfy the asymptotics:
\begin{equation} \label{volume_asymptotics}
vol(B(x,r)) \asymp \begin{cases}
r^3\:,\:\:&r\in(0,1)\\
e^{2r}\:,\:\:&r\ge 1\:.
\end{cases}
\end{equation}

A function $f: \Hb^3 \to \C$ is said to be {\em geodesically radial} if, for all $x$ in $\Hb^3$, $f(x)$ depends only on the geodesic distance of $x$ from the origin $o$. Henceforth, we will refer to geodesically radial functions simply as {\em radial functions}. 

We now recall the spherical functions on $\Hb^3$. The spherical functions $\varphi_\lambda$ on $\Hb^3$, for $\lambda \in \C$ are the radial eigenfunctions of the Laplace-Beltrami operator $\Delta_{\Hb^3}$, satisfying the following normalization criterion
\begin{equation*}
\begin{cases}
 & \Delta \varphi_\lambda = - \left(\lambda^2 + 1\right) \varphi_\lambda\:,  \\
& \varphi_\lambda(o)=1 \:.
\end{cases}
\end{equation*}
Since $G$ is a complex, non-compact semi-simple Lie group, a speciality of $\Hb^3$ is the explicitness of the spherical functions \cite[p. 432, Theorem 5.7]{Helgason}: 
\begin{equation*}
\varphi_\lambda(r) = \frac{\sin(\lambda r)}{\lambda \sinh(r)}\:,\:\:\lambda \in (0,\infty)\:.
\end{equation*}
The spherical functions are crucial as they help us define the Spherical Fourier transform of a ``nice" radial function $f$ (on $\Hb^3$) in the following way:
\begin{equation*}
\mathscr{F}f(\lambda):= \int_{\Hb^3} f(x)\: \varphi_\lambda(x)\: dvol(x) = \int_0^\infty f(r) \:\varphi_\lambda(r) \:\sinh^2(r)\:  dr\:.
\end{equation*}
One has the following inversion formula (when valid) for radial functions:
\begin{equation*}
f(r)= \int_{0}^\infty \mathscr{F}f(\lambda)\:\varphi_\lambda(r)\: \lambda^2 \: d\lambda\:.
\end{equation*}
Moreover, the Spherical Fourier transform extends to an isometry from the space of radial $L^2$ functions on $\Hb^3$ onto $L^2\left((0,\infty),\lambda^2 d\lambda\right)$. Due to the spectral shift of $\Delta_{\Hb^3}$, we consider the following $L^2$-fractional Sobolev spaces specialized to radial functions:
\begin{equation*} 
H^s(\Hb^3)_{radial}:=\left\{f \in L^2(\Hb^3)_{radial}: {\|f\|}_{H^s(\Hb^3)}:= {\left(\int_0^\infty {\left(\lambda^2 + 1\right)}^s {|\mathscr{F}f(\lambda)|}^2 \:\lambda^2 d\lambda\right)}^{\frac{1}{2}}< \infty\right\}\:.
\end{equation*}
The class of radial $L^2$-Schwartz class functions on $\Hb^3$, denoted by $\Ss^2(\Hb^3)_{radial}$, is defined to be the collection of $f \in C^\infty(\Hb^3)_{radial}$ such that 
\begin{equation*} 
\left|{\left(\frac{d}{dr}\right)}^M f(r)\right| \lesssim {(1+r)}^{-N} e^{-r}\:, \text{ for any } M, N \in \N \cup \{0\}\:.
\end{equation*}

\subsection{Two correspondence lemmata} \label{subsec7.1} 
We now present two correspondence results between $\Hb^3$ and $\R^3$, that will be the building blocks for our forthcoming arguments.

Our first result is geometric in nature, which we exhibit in greater generality. Let $\X$ be a non-positively curved Hadamard manifold with a fixed origin $o$. We now present a correspondence between fractal measures on geodesic annuli $A(r_1,r_2)$ with center $o$ in $\X$  and on annuli $\mathbb{A}(r_1,r_2)$  with center $0$, with respect to the flat metric in the tangent space $T_o\X$:
\begin{lemma} \label{fractal_measure_correspondence_lemma}
On a non-positively curved Hadamard manifold, for any $0<r_1<r_2<\infty$ and $\alpha \ge 0$, the Riemannian exponential map $\exp_o$ induces an one-to-one correspondence between $\M^\alpha\left(\mathbb{A}(r_1,r_2)\right)$ and $\M^\alpha\left(A(r_1,r_2)\right)$. 
\end{lemma}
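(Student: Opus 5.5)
The plan is to show that $\exp_o$ restricted to the closed annulus $\overline{\mathbb{A}(r_1,r_2)} \subset T_o\X$ is a bi-Lipschitz homeomorphism onto the closed geodesic annulus $\overline{A(r_1,r_2)}$. Pushforward of measures under a bi-Lipschitz map preserves $\alpha$-dimensional measures, so this gives the correspondence.

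\emph{Step 1: $\exp_o$ is a diffeomorphism.} By the Cartan--Hadamard theorem, $\exp_o : T_o\X \to \X$ is a diffeomorphism. Since $d(o,\exp_o v)=\|v\|$, it maps $\mathbb{A}(r_1,r_2)$ bijectively onto $A(r_1,r_2)$, and likewise for the closures.

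\emph{Step 2: bi-Lipschitz bounds.} For the lower bound, non-positive curvature gives the Rauch comparison (equivalently, $\exp_o$ does not decrease distances, $d(\exp_o v,\exp_o w) \ge \|v-w\|$). For the upper bound, $d\exp_o$ is continuous on the compact set $\overline{\mathbb{B}(0,r_2)}$, so $\|d\exp_o\|\le L$ there. The ball is convex, so integrating along segments gives $d(\exp_o v,\exp_o w)\le L\|v-w\|$. Hence
\begin{equation*}
\|v-w\| \le d(\exp_o v,\exp_o w) \le L\,\|v-w\|\:, \quad v,w \in \overline{\mathbb{B}(0,r_2)}\:.
\end{equation*}
If one prefers to avoid the global comparison, compactness and invertibility of $d\exp_o$ already give a local lower Lipschitz bound. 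This can be upgraded to a global one on the compact set by a standard argument using injectivity and continuity of the inverse.

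\emph{Step 3: transfer of Frostman constants.} Given $\mu\in\M^\alpha(\mathbb{A}(r_1,r_2))$, set $\tilde\mu:=(\exp_o)_*\mu$. It is a positive Radon measure, since $\exp_o$ is a homeomorphism, and its support is $\exp_o(Supp\,\mu)\subset A(r_1,r_2)$. For a geodesic ball $B(x,r)$ in $\X$ meeting the annulus, Step 2 gives
\begin{equation*}
\exp_o^{-1}\bigl(B(x,r)\cap \overline{A(r_1,r_2)}\bigr)\subset \mathbb{B}(v,2r)
\end{equation*}
for some $v$, by picking a point of the intersection. Therefore $\tilde\mu(B(x,r))\le 2^\alpha c_\alpha(\mu)\,r^\alpha$. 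Conversely, for $\nu\in\M^\alpha(A(r_1,r_2))$, set $(\exp_o^{-1})_*\nu$. Euclidean balls map into geodesic balls of radius $2Lr$, so its Frostman constant is at most $(2L)^\alpha c_\alpha(\nu)$. The two pushforwards are mutually inverse, which gives the one-to-one correspondence, with Frostman constants comparable up to constants depending only on $r_1,r_2,\alpha$.

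\emph{Main obstacle.} The only point requiring care is the uniform lower Lipschitz bound in Step 2, i.e.\ showing that $\exp_o^{-1}$ is Lipschitz on the annulus. I would get it directly from the standard fact that, in non-positive curvature, geodesic triangles are thinner than their Euclidean comparison triangles (the CAT(0) comparison). This gives $d(\exp_o v,\exp_o w)\ge\|v-w\|$ at once.
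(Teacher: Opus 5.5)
Your proposal is correct and follows essentially the paper's route: bi-Lipschitz control of $\exp_o$ on a compact region, then transferring the Frostman bound through the pushforward. The differences are minor. You recentre at a point of the intersection with the annulus instead of the paper's case split ($r\ge 1$, centres far from the annulus, and centres near it handled on $\mathbb{B}(0,r_2+2)$). You justify the lower Lipschitz bound via Rauch/CAT(0) rather than bare compactness. You also write out the inverse direction $(\exp_o^{-1})_*\nu$, which the paper leaves implicit but later uses in the proof of Lemma \ref{H3-N-maximal-estimate-radial}.
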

\begin{proof}
We start off by making the canonical identification $T_o\X \cong \R^n$, where $n$ is the topological dimension of $\X$. Due to the underlying geometry of $\X$,  $\exp_o$ is a diffeomorphism from $\R^n$ onto $\X$ and is also a radial isometry. Hence $\exp_o\left(\mathbb{A}(r_1,r_2)\right)=A(r_1,r_2)$ and moreover, it suffices to prove that for any $\mu \in \M^\alpha\left(\mathbb{A}(r_1,r_2)\right)$, its pushforward,
\begin{equation*}
(\left(\exp_o\right)_*\mu)(B):=\mu\left(\exp^{-1}_o(B)\right)\:,\:\:B \in \mathscr{B}(\X)\:,
\end{equation*}
is a member of $\M^\alpha\left(A(r_1,r_2)\right)$. Let $\mu \in \M^\alpha\left(\mathbb{A}(r_1,r_2)\right)$. Then there exists $C_1>0$ such that 
\begin{equation} \label{euclidean_inequality}
\mu\left(\mathbb{B}(x,r)\right) \le C_1r^\alpha\:,\:\:\text{ for all } x \in \R^n\:,\:r>0\:.
\end{equation}

It is clear that $\left(\exp_o\right)_*\mu$ is a positive Radon measure on $\X$ with support contained in $A(r_1,r_2)$. We now consider $(\left(\exp_o\right)_*\mu)(B(x,r))$ for geodesic balls $B(x,r) \subset \X$. For $r \ge 1$, we trivially have,
\begin{equation} \label{fractal_correspondence_eq1}
(\left(\exp_o\right)_*\mu)(B(x,r))=\mu\left(\exp^{-1}_o(B(x,r))\right) \le \mu\left(\mathbb{A}(r_1,r_2)\right) \le \mu\left(\mathbb{A}(r_1,r_2)\right) r^\alpha\:. 
\end{equation}
We next consider the case $r \in (0,1)$ and pay attention to the location of the center $x$. If $d(o,x) \ge r_2+1$, then $B(x,r) \cap  A(r_1,r_2)=\emptyset$ and hence,
\begin{equation} \label{fractal_correspondence_eq2}
(\left(\exp_o\right)_*\mu)(B(x,r))=0 \le r^\alpha\:.
\end{equation}
Finally if $d(o,x)<r_2+1$, then we note that $B(x,r) \subset B(o,r_2+2)$. Since $\exp_o$ is diffeomorphism from $\R^n$ onto $\X$, it is bi-Lipschitz from $\mathbb{B}(0,r_2+2)$ onto $B(o,r_2+2)$, that is, there exists a constant $C_2\ge 1$ such that 
\begin{equation*}
\frac{1}{C_2} \|a-b\| \le d(\exp_o(a),\exp_o(b)) \le C_2 \|a-b\|\:,\:\:a,b\in \mathbb{B}(0,r_2+2)\:.
\end{equation*}
Thus $$\exp^{-1}_o\left(B(x,r)\right) \subset B\left(\exp^{-1}_o(x),C_2r\right)\:,$$
which in turn by (\ref{euclidean_inequality}) yields
\begin{equation} \label{fractal_correspondence_eq3}
(\left(\exp_o\right)_*\mu)(B(x,r)) \le \mu\left(B\left(\exp^{-1}_o(x),C_2r\right)\right) \le (C_1 C^\alpha_2)\:r^\alpha\:.
\end{equation}
Then by (\ref{fractal_correspondence_eq1})-(\ref{fractal_correspondence_eq3}), the result follows by considering $\max\left\{\mu\left(\mathbb{A}(r_1,r_2)\right),\:1,\:C_1 C^\alpha_2\right\}$.
\end{proof}

Our next result is of analytic flavour and is specialized for $\Hb^3$:
\begin{lemma} \label{schwartz_correspondence_lemma}
Given any radial $f \in \mathcal{S}^2(\Hb^3)$ there exists a unique radial $F \in \mathcal{S}(\R^3)$ (and vice-versa) such that 
\begin{itemize}
\item $\mathscr{F}f=\what{F}$ on $[0,\infty)$ and 
\item $\|f\|_{H^s(\Hb^3)}\asymp\|F\|_{H^s(\R^3)}$, for all $s \ge 0$\:.
\end{itemize}
\end{lemma}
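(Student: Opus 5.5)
The plan is to build the correspondence explicitly through the Abel transform, which for $\Hb^3$ reduces to multiplication by an explicit even function of $r$. Since the spherical functions $\varphi_\lambda(r)=\sin(\lambda r)/(\lambda\sinh r)$ are explicit, for radial $f\in\Ss^2(\Hb^3)$ we have
\begin{equation*}
\mathscr{F}f(\lambda)=\frac{1}{\lambda}\int_0^\infty f(r)\,\sinh r\,\sin(\lambda r)\,dr\:.
\end{equation*}
For a radial $F\in\Ss(\R^3)$, polar coordinates and $J_{1/2}$ give
\begin{equation*}
\what{F}(\lambda)=\frac{4\pi}{\lambda}\int_0^\infty F(r)\,r\,\sin(\lambda r)\,dr\:.
\end{equation*}
Both transforms are therefore the one-dimensional sine transform of $r\mapsto f(r)\sinh r$ and of $r\mapsto 4\pi rF(r)$ respectively, up to the same factor $1/\lambda$. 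I will define
\begin{equation*}
F(r):=\frac{1}{4\pi}\,\frac{\sinh r}{r}\,f(r)\:,
\end{equation*}
and conversely $f(r)=4\pi\,\frac{r}{\sinh r}\,F(r)$. With this choice $\mathscr{F}f=\what{F}$ on $(0,\infty)$, and at $\lambda=0$ by continuity. Uniqueness follows from injectivity of the Euclidean Fourier transform on radial Schwartz functions, or equivalently of the spherical transform.

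The main work, and the step I expect to be the obstacle, is checking that the two maps preserve the relevant Schwartz classes.

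First, smoothness at the origin. A smooth radial function on $\Hb^3$ (respectively on $\R^3$) corresponds to a smooth even function of $r$ on $\R$. Both $\sinh r/r$ and $r/\sinh r$ are even, real-analytic and nonvanishing on $\R$, so the products are smooth and even, hence define smooth radial functions.

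Second, decay, which I will handle by Leibniz's rule. Every derivative of $\sinh r/r$ is $\lesssim e^{r}$ for $r\ge 1$. Combined with $|f^{(M)}(r)|\lesssim(1+r)^{-N}e^{-r}$ for all $M,N$, this gives $|F^{(M)}(r)|\lesssim_{M,N}(1+r)^{-N}$ for every $M,N$.
\begin{itemize}
\item For $F$ to be Schwartz on $\R^3$ one needs Cartesian derivatives to decay. These are combinations of $r^{-j}F^{(i)}(r)$ for $r\ge1$, plus smoothness near $0$, so the radial bounds suffice.
\item Conversely, every derivative of $r/\sinh r$ is $\lesssim (1+r)e^{-r}$. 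Hence $F\in\Ss(\R^3)$ radial gives $|f^{(M)}(r)|\lesssim(1+r)^{-N}e^{-r}$, that is, $f\in\Ss^2(\Hb^3)_{radial}$.
\end{itemize}

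Finally, the Sobolev norms. In polar coordinates,
\begin{equation*}
\|F\|^2_{H^s(\R^3)}=\int_{\R^3}(1+\|\xi\|^2)^s|\what{F}(\xi)|^2\,d\xi=4\pi\int_0^\infty(1+\lambda^2)^s|\what{F}(\lambda)|^2\lambda^2\,d\lambda\:.
\end{equation*}
Using $\what{F}=\mathscr{F}f$, this equals $4\pi\,\|f\|^2_{H^s(\Hb^3)}$, which gives $\|f\|_{H^s(\Hb^3)}\asymp\|F\|_{H^s(\R^3)}$ uniformly in $s\ge0$. Any Fourier normalization constants in the paper's conventions are absorbed by rescaling the constant $\frac{1}{4\pi}$ in the definition of $F$.
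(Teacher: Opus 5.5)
Your proof is correct, and the map you write down is the paper's map made explicit; the difference is in how the Schwartz-class correspondence is established.

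\textbf{The paper's route.} The paper sets $\mathscr{A}:=\mathscr{A}_2^{-1}\circ\mathscr{A}_1$, where $\mathscr{A}_1$ and $\mathscr{A}_2$ are the Abel transforms on $\Hb^3$ and $\R^3$. It then cites the known fact (Ch\'ebli--Trim\`eche hypergroups, Anker, Helgason) that both are topological isomorphisms onto $\mathcal{S}(\R)_{even}$, intertwining the spherical and Euclidean Fourier transforms with the one-dimensional one. The correspondence $\mathcal{S}^2(\Hb^3)_{radial}\leftrightarrow\mathcal{S}(\R^3)_{radial}$ and the identity $\mathscr{F}f=\what{F}$ then follow from a commutative diagram.

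\textbf{Why the maps coincide.} On $\Hb^3$ one has $\mathscr{A}_1 f(t)=c\int_{|t|}^\infty f(r)\sinh r\,dr$ and $\mathscr{A}_2 F(t)=c\int_{|t|}^\infty F(r)\,r\,dr$. Hence $\mathscr{A}_2^{-1}\circ\mathscr{A}_1$ is exactly multiplication by $\sinh r/r$, up to the normalizing constant, which is your $F$.

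\textbf{What your route buys.} You bypass the cited isomorphism theorem. You read the correspondence off the explicit spherical functions and verify the mapping of Schwartz classes by hand: evenness and analyticity of $\sinh r/r$ and $r/\sinh r$ handle the origin, and Leibniz's rule handles infinity. This makes transparent that the weight $e^{-r}$ built into $\mathcal{S}^2(\Hb^3)$ is precisely what the factor $\sinh r$ absorbs. It makes the lemma self-contained and explains the factor $r/\sinh r$ that appears in the proof of Lemma~\ref{H3-N-maximal-estimate-radial}. You also treat uniqueness explicitly, which the paper leaves implicit in the word ``isomorphism''.

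\textbf{What the paper's route buys.} The structural argument is the one that carries over to the generalizations proposed in Section~\ref{sec8}, namely $\Hb^n$ for other $n$ and rank one symmetric spaces. There $\mathscr{A}_2^{-1}\circ\mathscr{A}_1$ is in general no longer a pointwise multiplier but an integro-differential operator in $r$, so a direct Leibniz-type verification like yours is not available.
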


\begin{proof}
By the well-known properties of the Abel transform on Ch\'ebli-Trim\`eche hypergroups \cite{BX} (for symmetric spaces see \cite{AnkerJFA} and also \cite[Lemma 2.3]{Helgason-b}), we have the following commutative diagram, where
\begin{itemize}
\item $\mathscr{A}_1$ is the Abel transform defined from $\mathcal{S}^2(\Hb^3)_{radial}$ onto the space of even Schwartz class functions on $\R$, ${\mathcal{S}(\R)}_{even}$\:.
\item $\mathscr{A}_2$ is the Abel transform defined from the space of radial Schwartz class functions on $\R^3$, $\mathcal{S}(\R^3)_{radial}$ onto ${\mathcal{S}(\R)}_{even}$\:.
\item $\mathscr{F}$ denotes the Spherical Fourier transform from $\mathcal{S}^2(\Hb^3)_{radial}$ onto ${\mathcal{S}(\R)}_{even}$\:.
\item $\wedge$ denotes the Euclidean Fourier transform from $\mathcal{S}(\R^3)_{radial}$ onto ${\mathcal{S}(\R)}_{even}$\:.
\item $\sim$ denotes the 1-dimensional Euclidean Fourier transform from ${\mathcal{S}(\R)}_{even}$ onto itself.
\end{itemize}

\[
\begin{tikzcd}[row sep=1.4cm,column sep=1.4cm]
\mathcal{S}^2(\Hb^3)_{radial} \arrow[r,"\mathscr{A}_1"] \arrow[dr,swap,"\mathscr{F}"] & {\mathcal{S}(\R)}_{even}
\arrow[d,"\sim"] &
\arrow[l,"\mathscr{A}_2",swap]  \mathcal{S}(\R^3)_{radial} \arrow[dl,"\wedge"] \\
&  {\mathcal{S}(\R)}_{even}&
\end{tikzcd}
\]

Then, since all the maps above are topological isomorphisms, defining $\mathscr{A}:=\mathscr{A}^{-1}_2 \circ \mathscr{A}_1$, one can reduce matters to the following simplified commutative diagram:

\[
\begin{tikzcd}[row sep=1.4cm,column sep=1.4cm]
\mathcal{S}^2(\Hb^3)_{radial}\arrow[r,"\mathscr{A}"] \arrow[dr,swap,"\mathscr{F}"] & \mathcal{S}(\R^3)_{radial}
\arrow[d,"\wedge"]   \\
&  {\mathcal{S}(\R)}_{even}&
\end{tikzcd}
\]

Hence, for $f \in \mathcal{S}^2(\Hb^3)_{radial}$, we consider $F:= \mathscr{A}f \in \mathcal{S}(\R^3)_{radial}$ to observe that  
\begin{equation*}
\mathscr{F}f(\lambda)= \what{F}(\lambda)\:,\:\:\lambda \ge 0\:.
\end{equation*}
Moreover, for any $s \ge 0$, 
\begin{eqnarray*} 
{\|f\|}_{H^s(\Hb^3)} &=& {\left(\int_0^\infty {(\lambda^2 + 1)}^s \:{\left|\mathscr{F}f(\lambda)\right|}^2 \: \lambda^2 \: d\lambda\right)}^{1/2} \\
&=& {\left(\int_0^\infty {(\lambda^2 + 1)}^s \:{\left|\what{F}(\lambda)\right|}^2 \: \lambda^2 \: d\lambda\right)}^{1/2} \nonumber\\
&\asymp&  {\|F\|}_{H^s(\R^3)}\:.
\end{eqnarray*}
This completes the proof of Lemma \ref{schwartz_correspondence_lemma}.
\end{proof}

\subsection{Fractal Sobolev embedding and a fractal maximal estimate in high regularity} \label{subsec7.2}
As in the classical Euclidean setting, in order to make the notion of the Hausdorff dimension of divergence sets well-defined, we need to fix a canonical choice of representative for the intial data and its propagation that will make sense even on the fractal level.

For $x \in \Hb^3$, we denote $d(o,x)$ by $r$. For initial data $f \in \mathcal{S}^2(\Hb^3)_{radial}$, the solution to the dispersive equation
\begin{equation*} 
\begin{cases}
	 i\frac{\partial u}{\partial t} +\psi(\sqrt{-\Delta_{\Hb^3}} )u=0\:,\:  &(x,t) \in \Hb^3 \times \R\:, \\
	u(\cdot,0)=f\:,\: &\text{ on } \Hb^3 \:,
	\end{cases}
\end{equation*}
is expressed pointwise by the spherical Fourier inversion,
\begin{equation*}
u(r,t)=\int_0^\infty \varphi_\lambda(r)\:e^{it\psi(\sqrt{\lambda^2+1})}\:\mathscr{F}f(\lambda)\:\lambda^2\:d\lambda\:,
\end{equation*}
and is continuous. But for general radial $f \in H^s(\Hb^3),$ we again consider
\begin{equation*}
S^N_{\psi,t}f(r):= \int_0^\infty \Phi\left(N^{-1}\lambda\right)\: \varphi_\lambda(r)\:e^{it\psi(\sqrt{\lambda^2+1})}\:\mathscr{F}f(\lambda)\:\lambda^2\:d\lambda\:,\:\:N\in \N\:,
\end{equation*}
where $\Phi(\lambda):=e^{-\lambda^2}$ and then define $u$ as the pointwise limit (whenever it exists),
\begin{equation} \label{H3_representative}
u(\cdot,t):=\lim_{N \to \infty} S^N_{\psi,t}f\:.
\end{equation} 
It is easy to see that, the choice of $u(\cdot,t)$ given by (\ref{H3_representative}) coincides with the classical $L^2$-limits, in the sense of volume measure. But interestingly, it is also well-defined with respect to fractal measures for any $s>0$. The key to this is the following estimate: 
\begin{lemma} \label{H3-N-maximal-estimate-radial}
Let $s>0$ and $\alpha >3-2s$. Then for any radial $f \in H^s(\Hb^3)$, any $\mu \in \M^\alpha(A(r_1,r_2))$, for $0<r_1<r_2<\infty$ and any $t \in \R$, we have
\begin{equation*}
\left\|\sup_{N \ge 1} \left|S^N_{\psi,t}f\right|\right\|_{L^2(d\mu)} \lesssim \|f\|_{H^s(\Hb^3)}\:,
\end{equation*}
where the implicit constant depends only on $\alpha, \mu, r_1,r_2$\:.
\end{lemma}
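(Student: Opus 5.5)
We reduce Lemma \ref{H3-N-maximal-estimate-radial} to its Euclidean counterpart, Lemma \ref{N-maximal-estimate-radial} on $\R^3$, using the two correspondence lemmata. The key identity is that for $r=d(o,x)>0$,
\begin{equation*}
\varphi_\lambda(r)=\frac{\sin(\lambda r)}{\lambda\sinh r}=\frac{r}{\sinh r}\cdot\frac{\sin(\lambda r)}{\lambda r},
\end{equation*}
and $\sin(\lambda\|y\|)/(\lambda\|y\|)$ is, up to a constant, the radial Euclidean Fourier kernel on $\R^3$. So for radial $f$ we first take $f\in\Ss^2(\Hb^3)_{radial}$ and let $F\in\Ss(\R^3)_{radial}$ be given by Lemma \ref{schwartz_correspondence_lemma}, so that $\mathscr{F}f=\what F$ on $[0,\infty)$. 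Polar coordinates on $\R^3$ then give, for $y\in\R^3$ with $\|y\|=r$,
\begin{equation*}
S^N_{\psi,t}f(r)= c\,\frac{r}{\sinh r}\,S^N_{\wtilde\psi,t}F(y),\qquad \wtilde\psi(\lambda):=\psi\left(\sqrt{\lambda^2+1}\right),
\end{equation*}
where on the right $S^N_{\wtilde\psi,t}$ is the Euclidean operator (\ref{N-scale}) on $\R^3$ with phase $\wtilde\psi$. The Gaussian cutoff $\Phi(N^{-1}\lambda)$ is the same on both sides, so the identity holds for every $N$ and $t$. On $A(r_1,r_2)$ the factor $r/\sinh r$ is bounded above and below by constants depending only on $r_1,r_2$.

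Next we transport the measure. By Lemma \ref{fractal_measure_correspondence_lemma} (with $\X=\Hb^3$, $T_o\X\cong\R^3$), each $\mu\in\M^\alpha(A(r_1,r_2))$ is $(\exp_o)_*\nu$ for a unique $\nu\in\M^\alpha(\mathbb{A}(r_1,r_2))$. Since $\exp_o$ preserves the distance to the origin, we have $d(o,\exp_o y)=\|y\|$. Hence
\begin{equation*}
\left\|\sup_{N}|S^N_{\psi,t}f|\right\|_{L^2(d\mu)}\lesssim_{r_1,r_2}\left\|\sup_N|S^N_{\wtilde\psi,t}F|\right\|_{L^2(d\nu)}\lesssim\sqrt{c_\alpha(\nu)}\,\|F\|_{H^s(\R^3)}\asymp\sqrt{c_\alpha(\nu)}\,\|f\|_{H^s(\Hb^3)}.
\end{equation*}
The middle step is Lemma \ref{N-maximal-estimate-radial}, valid for $\alpha>3-2s$ and any measurable phase, in particular $\wtilde\psi$; the last step is the Sobolev equivalence in Lemma \ref{schwartz_correspondence_lemma}. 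The constant $c_\alpha(\nu)$ depends only on $\mu$, which matches the stated dependence of the implicit constant on $\alpha,\mu,r_1,r_2$.

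The main obstacle is extending the bound from $\Ss^2(\Hb^3)_{radial}$ to all radial $f\in H^s(\Hb^3)$, because $S^N_{\psi,t}f$ must be defined pointwise and the supremum over $N$ controlled. For fixed $N$, the integrand $\Phi(N^{-1}\lambda)\varphi_\lambda(r)\lambda^2$ lies in $L^2((0,\infty),\lambda^2d\lambda)$ uniformly for $r$ in compact sets, since $|\varphi_\lambda(r)|\le C_{r_1,r_2}$. By Cauchy--Schwarz, $f\mapsto S^N_{\psi,t}f(r)$ is therefore continuous from $H^s(\Hb^3)_{radial}$ into bounded functions on $A(r_1,r_2)$. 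Take radial Schwartz $f_k\to f$ in $H^s$; density holds because the spherical transform identifies $\Ss^2(\Hb^3)_{radial}$ with even Schwartz functions, which are dense in the weighted $L^2$ space. Then $S^N_{\psi,t}f_k\to S^N_{\psi,t}f$ uniformly on the annulus for each $N$. Taking the supremum over finitely many $N$, passing to the limit in $k$, and applying monotone convergence in the number of $N$'s gives the estimate for general $f$. One should also check that the constant in Lemma \ref{N-maximal-estimate-radial} does not depend on the phase; this holds because the argument of \cite{BBCR} uses only $|e^{it\psi}|=1$ for the fixed-time estimate.
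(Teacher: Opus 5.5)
Your proposal is correct and follows essentially the same route as the paper. You reduce to $\Ss^2(\Hb^3)_{radial}$, use $\varphi_\lambda(r)=\frac{r}{\sinh r}\cdot\frac{\sin(\lambda r)}{\lambda r}$ together with Lemma \ref{schwartz_correspondence_lemma} to write $S^N_{\psi,t}f$ as $\frac{r}{\sinh r}$ times the Euclidean $S^N_{\wtilde{\psi},t}F$, pull $\mu$ back to some $\nu\in\M^\alpha(\mathbb{A}(r_1,r_2))$ via Lemma \ref{fractal_measure_correspondence_lemma}, and invoke Lemma \ref{N-maximal-estimate-radial}. Your explicit limiting argument for the density step fills in what the paper only asserts as ``by density'': uniform convergence of each $S^N_{\psi,t}f_k$ on the annulus, then monotone convergence over finitely many $N$.
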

\begin{proof}
We fix $s>0,\:\alpha >3-2s$ and $\mu \in \M^\alpha(A(r_1,r_2))$ with its one-dimensional radial pushforward denoted by $\mu_0$. By density, it suffices to prove the inequality for all radial functions in $\mathcal{S}^2(\Hb^3)$. Let $f \in \mathcal{S}^2(\Hb^3)_{radial}$. By Lemma \ref{schwartz_correspondence_lemma}, there exists $F \in \mathcal{S}(\R^3)_{radial}$ such that 
\begin{equation*}
\mathscr{F}f=\what{F}\:,\:\:\text{ on } [0,\infty) \text{ and } 
\|f\|_{H^s(\Hb^3)}\asymp\|F\|_{H^s(\R^3)}\:\:.
\end{equation*}
Thus for all $r \in (r_1,r_2)$ and all $t \in \R$,
\begin{eqnarray} \label{H3-N-eq1}
S^N_{\psi,t}f(r)&=& \int_0^\infty \Phi\left(N^{-1}\lambda\right)\: \varphi_\lambda(r)\:e^{it\psi(\sqrt{\lambda^2+1})}\:\mathscr{F}f(\lambda)\:\lambda^2\:d\lambda \nonumber\\
&=& \int_0^\infty \Phi\left(N^{-1}\lambda\right)\:\frac{\sin(\lambda r)}{\lambda \sinh(r)} \:e^{it\psi(\sqrt{\lambda^2+1})}\:\mathscr{F}f(\lambda)\:\lambda^2\:d\lambda \nonumber\\
&=& \left(\frac{ r}{\sinh(r)}\right) \int_0^\infty \Phi\left(N^{-1}\lambda\right)\: \frac{\sin(\lambda r)}{\lambda r} \: e^{it\wtilde{\psi}(\lambda)} \: \what{F}(\lambda) \: \lambda^2 \: d\lambda \nonumber\\
&=& \left(\frac{ r}{\sinh(r)}\right) \int_0^\infty \Phi\left(N^{-1}\lambda\right)\left( \frac{J_{\frac{1}{2}}(\lambda r)}{{(\lambda r)}^{\frac{1}{2}}} \right) e^{it\wtilde{\psi}(\lambda)} \: \what{F}(\lambda) \: \lambda^2 \: d\lambda \nonumber\\
&=& C\left(\frac{ r}{\sinh(r)}\right) S^N_{\wtilde{\psi},t}F(r)\:,
\end{eqnarray}
for some $C>0$, where $S^N_{\wtilde{\psi},t}F$ is the Euclidean counter-part given by (\ref{N-scale}), corresponding to the dispersive equation:
\begin{equation*} 
\begin{cases}
	 i\frac{\partial u}{\partial t} +\wtilde{\psi}(\sqrt{-\Delta} )u=0\:,\:  &(x,t) \in \mathcal \R^3 \times \R\:, \\
	u(\cdot,0)=F\:,\: &\text{ on } \mathcal \R^3 \:,
	\end{cases}
\end{equation*}
where $\wtilde{\psi}(\lambda):=\psi(\sqrt{\lambda^2+1})$. Thus by (\ref{H3-N-eq1}), we have for all $r \in (r_1,r_2)$ and all $t \in \R$, the pointwise comparability,
\begin{equation} \label{H3-N-eq2}
\sup_{N \ge 1} \left|S^N_{\psi,t}f(r)\right| \asymp \sup_{N \ge 1} \left|S^N_{\wtilde{\psi},t}F(r)\right|\:.
\end{equation}
Now by Lemma \ref{fractal_measure_correspondence_lemma}, identifying $T_o\Hb^3 \cong \R^3$, there exists $\nu \in \M^\alpha(\mathbb{A}(r_1,r_2))$ such that $\mu=(\exp_o)_*\nu$. Moreover, if the one-dimensional radial pushforward of $\nu$ is given by $\nu_0$, then by the fact that $\exp_o$ is a radial isometry, it follows that, $\mu_0=\nu_0$, as measures on $[0,\infty)$, supported in the interval $(r_1,r_2)$. Combining this with (\ref{H3-N-eq2}), we have,
\begin{eqnarray*}
\left\|\sup_{N \ge 1} \left|S^N_{\psi,t}f\right|\right\|_{L^2(d\mu)} &=& \left(\int_{r_1}^{r_2}\left(\sup_{N \ge 1} \left|S^N_{\psi,t}f(r)\right|\right)^2\:d\mu_0(r)\right)^\frac{1}{2} \\
&\asymp & \left(\int_{r_1}^{r_2}\left(\sup_{N \ge 1} \left|S^N_{\wtilde{\psi},t}F(r)\right|\right)^2\:d\nu_0(r)\right)^\frac{1}{2} \\
&=& \left\|\sup_{N \ge 1} \left|S^N_{\wtilde{\psi},t}F\right|\right\|_{L^2(d\nu)}\:,
\end{eqnarray*}
which along with Lemma \ref{N-maximal-estimate-radial} yields
\begin{eqnarray*}
\left\|\sup_{N \ge 1} \left|S^N_{\psi,t}f\right|\right\|_{L^2(d\mu)}  \lesssim  \left\|\sup_{N \ge 1} \left|S^N_{\wtilde{\psi},t}F\right|\right\|_{L^2(d\nu)} \lesssim \|F\|_{H^s(\R^3)} \lesssim \|f\|_{H^s(\Hb^3)}\:.
\end{eqnarray*}
Since $f$ was arbitrary, this completes the proof of Lemma \ref{H3-N-maximal-estimate-radial}.
\end{proof}

\begin{remark} \label{inter-spatial-remark}
The inter-spatial transference argument for the fractal $N$-maximal estimates employed in the proof of Lemma \ref{H3-N-maximal-estimate-radial}, combining the building blocks of Lemmata \ref{fractal_measure_correspondence_lemma} and \ref{schwartz_correspondence_lemma}, is the heart of the matter of this section and is of independent interest. For instance, the above idea along with the arguments in Appendix A in \cite{BBCR} also yields that the choice given by (\ref{H3_representative}) for the initial data $f$ also coincides up to a set of Hausdorff dimension $\le 3-2s$, with the classical choice given as a Bessel potential. Moreover, it allows us to obtain the transference principle Theorem \ref{H3_transference} (in Subsection \ref{subsec7.4}), leading up to the proof of Theorem \ref{H3_theorem}.
\end{remark}

As a consequence of Lemma \ref{H3-N-maximal-estimate-radial}, we have the following  fixed-time fractal Sobolev embedding on geodesic annuli which garners the fractal well-definedness:
\begin{lemma} \label{H3-Sobolev-embedding}
Let $s>0$ and $\alpha >3-2s$. Then for any radial $f \in H^s(\Hb^3)$, any $\mu \in \M^\alpha(A(r_1,r_2))$, for $0<r_1<r_2<\infty$ and any $t \in \R$, we have
\begin{equation*}
\left\|u(\cdot,t)\right\|_{L^2(d\mu)} \lesssim \|f\|_{H^s(\Hb^3)}\:,
\end{equation*}
where the implicit constant depends only  on $\alpha, \mu,r_1,r_2$\:.
\end{lemma}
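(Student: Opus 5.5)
The plan is to deduce Lemma \ref{H3-Sobolev-embedding} directly from the $N$-maximal estimate of Lemma \ref{H3-N-maximal-estimate-radial}, in the same way Corollary \ref{fractal-Sobolev-embedding} follows from Lemma \ref{N-maximal-estimate} in the Euclidean setting. Fix $s>0$, $\alpha>3-2s$, $\mu\in\M^\alpha(A(r_1,r_2))$, $t\in\R$ and a radial $f\in H^s(\Hb^3)$. By the definition (\ref{H3_representative}), $u(\cdot,t)$ is the pointwise limit of $S^N_{\psi,t}f$ as $N\to\infty$ wherever this limit exists. So the two tasks are to show that the limit exists $\mu$-a.e., and that it is controlled by the maximal function.

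\textbf{Step 1 (pointwise domination).} At every point $r$ where the limit exists,
\begin{equation*}
|u(r,t)|\le \sup_{N\ge1}\left|S^N_{\psi,t}f(r)\right|.
\end{equation*}
Taking $L^2(d\mu)$ norms and applying Lemma \ref{H3-N-maximal-estimate-radial} then gives the claimed bound, provided the limit exists $\mu$-a.e.

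\textbf{Step 2 ($\mu$-a.e. existence of the limit).} This is the main point. I would use the standard density argument for maximal operators.
\begin{itemize}
\item Approximate $f$ in $H^s(\Hb^3)$ by $f_k\in\Ss^2(\Hb^3)_{radial}$.
\item For each $f_k$, the spherical transform $\mathscr{F}f_k$ decays rapidly, so $\varphi_\lambda(r)\lambda^2\mathscr{F}f_k(\lambda)$ is integrable, using $|\varphi_\lambda(r)|\le 1$. Dominated convergence then shows that $S^N_{\psi,t}f_k(r)$ converges for every $r$, since $\Phi(N^{-1}\lambda)\to1$.
\item Apply Lemma \ref{H3-N-maximal-estimate-radial} to $f-f_k$. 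For every $\epsilon>0$, the oscillation
\begin{equation*}
\limsup_{N,M\to\infty}\left|S^N_{\psi,t}f-S^M_{\psi,t}f\right|\le 2\sup_{N\ge1}\left|S^N_{\psi,t}(f-f_k)\right|
\end{equation*}
then exceeds $\epsilon$ only on a set of $\mu$-measure at most $C\epsilon^{-2}\|f-f_k\|^2_{H^s(\Hb^3)}$, by Chebyshev's inequality.
\item Letting $k\to\infty$ shows the oscillation vanishes $\mu$-a.e. Hence $S^N_{\psi,t}f(r)$ is Cauchy, and the limit exists $\mu$-a.e.
\end{itemize}

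\textbf{Conclusion.} Combining Steps 1 and 2, the bound $\|u(\cdot,t)\|_{L^2(d\mu)}\lesssim\|f\|_{H^s(\Hb^3)}$ follows, with implicit constant inherited from Lemma \ref{H3-N-maximal-estimate-radial}, so it depends only on $\alpha,\mu,r_1,r_2$.

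The only mildly delicate point is Step 2: one must check that the canonical representative is well defined $\mu$-almost everywhere, and not merely almost everywhere with respect to volume. This is exactly what the maximal estimate provides. Alternatively, one could transfer to $\R^3$ via (\ref{H3-N-eq1}) and Lemma \ref{fractal_measure_correspondence_lemma}, and then quote Lemma \ref{Annulus-Sobolev-embedding} directly.
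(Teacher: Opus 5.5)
Your proposal is correct and takes essentially the same approach as the paper. The paper simply records this lemma as an immediate consequence of Lemma \ref{H3-N-maximal-estimate-radial}, just as Corollary \ref{fractal-Sobolev-embedding} follows from Lemma \ref{N-maximal-estimate}; your pointwise domination together with the density and Chebyshev argument for the $\mu$-a.e.\ existence of the limit is the standard reasoning the paper leaves implicit.
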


In high regularity $s>1/2$ for radial initial data, however,  the situation is much better:
\begin{lemma} \label{H3-lemma_s>1/2}
We have for all phases $\psi$, all $f \in H^s(\Hb^3)$ radial, for $s>1/2$,
\begin{equation*}
\left\|\sup_{t \in \I} \sup_{N \ge 1} \left|S^N_{\psi,t}f\right|\right\|_{L^1(d\mu)} \lesssim \|f\|_{H^s(\Hb^3)}\:,
\end{equation*}
where $\mu \in \M^\alpha(A(r_1,r_2))$, for $0<r_1<r_2<\infty,\:\alpha>0$. The implicit constant in the above inequality depends only on $\mu,\:r_1,\:r_2$\:.
\end{lemma}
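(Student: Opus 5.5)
The plan is to reduce Lemma \ref{H3-lemma_s>1/2} to its Euclidean counterpart, Lemma \ref{lemma_s>1/2} on $\R^3$, using the inter-spatial transference of Lemma \ref{H3-N-maximal-estimate-radial}. By density, it suffices to prove the estimate for $f \in \mathcal{S}^2(\Hb^3)_{radial}$. The extension to general radial $f \in H^s(\Hb^3)$ is the usual limiting argument: the maximal operator is sublinear, and $S^N_{\psi,t}f$ depends continuously (pointwise, uniformly in $t$ and $N$) on $f$ in the $H^s$ norm. That continuity is itself a consequence of the pointwise bound established below.

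For such $f$, Lemma \ref{schwartz_correspondence_lemma} provides $F \in \mathcal{S}(\R^3)_{radial}$ with $\mathscr{F}f = \what{F}$ on $[0,\infty)$ and $\|f\|_{H^s(\Hb^3)} \asymp \|F\|_{H^s(\R^3)}$. Exactly as in (\ref{H3-N-eq1}), I will use the explicit spherical function $\varphi_\lambda(r) = \sin(\lambda r)/(\lambda\sinh r)$ and $J_{1/2}(y) = c\,y^{-1/2}\sin y$. This gives, for every $r \in (r_1,r_2)$, $t \in \I$ and $N \ge 1$,
\begin{equation*}
S^N_{\psi,t}f(r) = C\,\frac{r}{\sinh r}\, S^N_{\wtilde{\psi},t}F(r), \qquad \wtilde{\psi}(\lambda) := \psi\bigl(\sqrt{\lambda^2+1}\bigr).
\end{equation*}
Since $r/\sinh r \le 1$, taking the supremum over $t \in \I$ and $N \ge 1$ yields the pointwise bound
\begin{equation*}
\sup_{t \in \I}\sup_{N\ge1}\left|S^N_{\psi,t}f(r)\right| \lesssim \sup_{t \in \I}\sup_{N\ge1}\left|S^N_{\wtilde{\psi},t}F(r)\right|.
\end{equation*}
Lemma \ref{lemma_s>1/2} holds for all phases, so it applies to $\wtilde{\psi}$.

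Next I transfer the measure. By Lemma \ref{fractal_measure_correspondence_lemma}, $\mu = (\exp_o)_*\nu$ for some $\nu \in \M^\alpha(\mathbb{A}(r_1,r_2))$. Because $\exp_o$ is a radial isometry, the radial pushforwards satisfy $\mu_0 = \nu_0$. Both maximal functions are radial, so
\begin{equation*}
\left\|\sup_{t}\sup_{N}|S^N_{\psi,t}f|\right\|_{L^1(d\mu)} \lesssim \left\|\sup_{t}\sup_{N}|S^N_{\wtilde{\psi},t}F|\right\|_{L^1(d\nu)} \lesssim \|F\|_{H^s(\R^3)} \asymp \|f\|_{H^s(\Hb^3)},
\end{equation*}
where the middle inequality is Lemma \ref{lemma_s>1/2} on $\R^3$.

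An alternative that bypasses both transference steps is to redo the Cauchy--Schwarz argument directly on $\Hb^3$. One uses $|\varphi_\lambda(r)\lambda^2| \le \lambda/\sinh r_1$ for $r \ge r_1$, and then
\begin{equation*}
\int_0^\infty \frac{\lambda\,|\mathscr{F}f(\lambda)|}{1} \,d\lambda \le \left(\int (1+\lambda^2)^s|\mathscr{F}f|^2\lambda^2\,d\lambda\right)^{1/2}\left(\int_0^\infty (1+\lambda^2)^{-s}\,d\lambda\right)^{1/2},
\end{equation*}
where the last integral is finite precisely when $s > 1/2$. This proves the pointwise bound uniformly in $r \in (r_1,r_2)$, $t$ and $N$, and integrating against the finite measure $\mu$ finishes the proof.

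The only real obstacle is the density step. It requires checking that the canonical representative is stable under $H^s$ limits, and this stability follows from that same uniform pointwise bound.
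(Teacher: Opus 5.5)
Your main argument is correct and follows essentially the paper's route. The paper proves this lemma by repeating the inter-spatial transference of Lemma \ref{H3-N-maximal-estimate-radial}: the Schwartz correspondence, the identity $S^N_{\psi,t}f = C\frac{r}{\sinh r}S^N_{\wtilde{\psi},t}F$, and $\mu_0=\nu_0$ via $\exp_o$, followed by an appeal to Lemma \ref{lemma_s>1/2} on $\R^3$. Your direct Cauchy--Schwarz alternative is also valid, and simpler: the bound $|\varphi_\lambda(r)|\lambda^2\le \lambda/\sinh r_1$ gives the pointwise estimate for every radial $f\in H^s(\Hb^3)$ at once, so neither the correspondence lemmas nor the density step are needed.
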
 
\begin{proof}
The proof follows by the similar inter-spatial transference argument as in the proof of Lemma \ref{H3-N-maximal-estimate-radial}, combining with Lemma \ref{lemma_s>1/2}.
\end{proof}

\subsection{Singularity of Sobolev functions} \label{subsec7.3}
In this subsection, we will obtain the following quantitative result on the singularities of Sobolev functions:
\begin{theorem} \label{sobolev_singularity_theorem}
\begin{enumerate}
\item Let $s \in (0,3/2)$. Then there exists a compact subset $\mathcal{K} \subset \Hb^3$ and a function $f \in H^s(\Hb^3)$ such that $dim_\mathcal{H}(\mathcal{K})<3-2s$ and $f$ is singular precisely on $\mathcal{K}$.
\item Let $s \in (0,1/2)$. Then there exists a radially symmetric compact subset $\mathcal{K} \subset \Hb^3$ and a radial $f \in H^s(\Hb^3)$ such that $dim_\mathcal{H}(\mathcal{K})<3-2s$ and $f$ is singular precisely on $\mathcal{K}$.
\end{enumerate}
\end{theorem}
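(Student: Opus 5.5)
The plan is to transplant the Euclidean constructions to $\Hb^3$ through the exponential chart $\exp_o$, instead of redoing \v{Z}ubrini\'c's construction intrinsically. For part (1), I would start from \v{Z}ubrini\'c's theorem \cite{Zubrinic} in $\R^3$. For $s\in(0,3/2)$ it gives a compact $K_0\subset\R^3$ with $dim_{\cH}K_0<3-2s$ and $F=G_s*g\in H^s(\R^3)$, with $g\ge 0$ in $L^2$, such that $F$ is singular (i.e. $F=+\infty$) exactly on $K_0$. By translation and dilation I may assume $K_0\subset\mathbb{B}(0,1/2)$. Replacing $g$ by $g\chi_{\mathbb{B}(0,1)}$ does not change where $F$ blows up: the discarded part of $g$ contributes a function that is smooth on $\mathbb{B}(0,1/2)$ and finite on $\mathbb{B}(0,1/2)^c$ (an $L^2$ function convolved with the $L^2$ tail of $G_s$). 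Away from $K_0$ I would multiply by a cutoff $\chi\in C_c^\infty(\mathbb{B}(0,1))$ with $\chi\equiv1$ near $\overline{\mathbb{B}(0,1/2)}$. Then I set
\[
f:=(\chi F)\circ\exp_o^{-1},\qquad \mathcal{K}:=\exp_o(K_0).
\]
Since $\exp_o$ is bi-Lipschitz on $\mathbb{B}(0,2)$, as in the proof of Lemma \ref{fractal_measure_correspondence_lemma}, we get $dim_{\cH}\mathcal{K}=dim_{\cH}K_0<3-2s$. Also $f$ is $+\infty$ exactly on $\mathcal{K}$ and finite elsewhere.

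The analytic step is to show that $f\in H^s(\Hb^3)$. For this I would prove a local equivalence: for $u$ supported in a fixed compact set $B(o,2)$,
\[
\|u\|_{H^s(\Hb^3)}\asymp\|u\circ\exp_o\|_{H^s(\R^3)}.
\]
For integer $s$ this holds because $\Hb^3$ has bounded geometry, so the norm with multiplier $(\lambda^2+1)^{s}$, i.e. the norm of $(-\Delta_{\Hb^3})^{s/2}u$ in $L^2$, is comparable to the local norm of the first $s$ covariant derivatives. The case $s\in(0,3/2)$ then follows by complex interpolation between $s=0$ and $s=2$. I would state this carefully for compactly supported functions, since $H^s(\Hb^3)$ is defined spectrally. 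Once it is in place, $f\in H^s(\Hb^3)$ is immediate.

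I also need ``singular'' to be meaningful for the canonical representative on $\Hb^3$, i.e. the Bessel potential $f=\mathcal{G}_s*h$ with $h\in L^2(\Hb^3)$, where $\mathcal{G}_s$ is the kernel of $(-\Delta_{\Hb^3})^{-s/2}$. I would compute this kernel by subordination to the explicit heat kernel
\[
p_t(r)=(4\pi t)^{-3/2}\frac{r}{\sinh r}e^{-t-r^2/4t},
\qquad
\mathcal{G}_s=\frac{1}{\Gamma(s/2)}\int_0^\infty t^{s/2-1}p_t\,dt .
\]
This gives $\mathcal{G}_s(r)\asymp r^{s-3}$ as $r\to0$, exponential decay as $r\to\infty$, and $\mathcal{G}_s$ smooth away from $0$. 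Locally it therefore differs from the Euclidean $G_s$ pulled back by $\exp_o$ only by a kernel that is smoother. With this, the local singular behaviour of $\mathcal{G}_s*h$ and of $G_s*g$ agree up to an everywhere finite error, so the singular set is precisely $\mathcal{K}$. Making this comparison rigorous, uniformly enough to keep exact equality of singular sets rather than just ``up to a smaller-dimensional set'', is the step I expect to be the main obstacle. My first attempt would be to choose $h$ directly as the pullback of $g$ (times the Jacobian), so that $f=\mathcal{G}_s*h$ by construction. Then I would show that $\mathcal{G}_s*h-(G_s*g)\circ\exp_o^{-1}$ is continuous, by splitting the kernel difference near and away from the diagonal.

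For part (2), I would run the same argument with the radial construction of Bennett--Rogers \cite{BR} (Remark \ref{BR_remark}) in place of \v{Z}ubrini\'c's. For $s\in(0,1/2)$ it gives a radial $F\in H^s(\R^3)$ singular exactly on a radially symmetric compact set $\{x:\|x\|\in E\}$ with $dim_{\cH}<3-2s$. By dilation I would place $E$ in $(1/4,1/2)$ and use a radial cutoff. Since $\exp_o$ is a radial isometry, $f=(\chi F)\circ\exp_o^{-1}$ is geodesically radial and $\mathcal{K}=\{x\in\Hb^3: d(o,x)\in E\}$ is radially symmetric. The rest (local norm equivalence, Bessel kernel comparison) is identical. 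I would remark that the radial kernel $\mathcal{G}_s*h$ stays radial whenever $h$ is radial, so the representative remains radial.
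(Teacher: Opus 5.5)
\textbf{The gap.} The step you flag as the main obstacle does not go through as planned. Your claim that $\mathcal{G}_s$ differs from the pulled-back Euclidean kernel ``only by a kernel that is smoother'' holds for the radial profiles: subordinating the two heat kernels gives exactly $\mathcal{G}_s(r)=\frac{r}{\sinh r}\,G_s(r)$, a correction of size $O(r^{s-1})$. What you actually need, however, is a comparison of the two-point kernels $\mathcal{G}_s(d(\exp_o u,\exp_o v))$ and $G_s(u-v)$, and these differ already at leading order. The chart $\exp_o$ is an infinitesimal isometry only at $o$. The pulled-back metric is $dr^2+\sinh^2 r\,d\sigma^2$, so for $v-u\perp u$ one has $d(\exp_o u,\exp_o v)=\frac{\sinh\|u\|}{\|u\|}\|u-v\|(1+o(1))$. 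Normalize $h$ so that $h\,dvol=(\exp_o)_*(g\,dv)$. Then in those directions the kernel difference is $c\big[(\sinh\|u\|/\|u\|)^{s-3}-1+o(1)\big]\|u-v\|^{s-3}$. This is of the same order as the kernels themselves, and it is not in $L^2_{loc}$ for $s<3/2$. So splitting near and away from the diagonal cannot show that $\mathcal{G}_s*h-(G_s*g)\circ\exp_o^{-1}$ is continuous. In fact the difference is not continuous:
\begin{itemize}
\item since $d(\exp_o u,\exp_o v)\ge\|u-v\|$ and $G_s$ is radially decreasing, the difference kernel is $\le 0$;
\item it is $\le -c\|u-v\|^{s-3}$ on a cone of directions transversal to $u$;
\item \v{Z}ubrini\'c's $g$ satisfies $g\ge d(\cdot,K_0)^{-\gamma}$ near $K_0$ with $\gamma>s$.
\end{itemize}
Fatou's lemma then shows that the difference tends to $-\infty$ as one approaches any point of $\mathcal{K}\setminus\{o\}$.

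\textbf{How to repair it.} What rescues the argument is positivity, not cancellation. The function $g\ge0$ is compactly supported, both kernels are positive, $\exp_o$ is bi-Lipschitz on compacts, and $\mathcal{G}_s(r)\asymp r^{s-3}$ for small $r$ (Lemma \ref{lemma_bessel_kernel}). Together these give $\mathcal{G}_s(d(\exp_o u,\exp_o v))\asymp G_s(u-v)$ uniformly for $u,v$ in a fixed ball. Hence $(\mathcal{G}_s*h)\circ\exp_o\asymp G_s*g$ there, while both potentials are continuous away from the supports. So the two potentials are infinite, and locally unbounded, at exactly the same points. If you set $f:=\mathcal{G}_s*h$, then $f\in H^s(\Hb^3)$ comes for free, and your local norm-equivalence and interpolation step becomes unnecessary.

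\textbf{Comparison with the paper.} The repaired argument is essentially the paper's mechanism. The paper takes $\mathcal{K}=\exp_o(\mathcal{K}')$ with equal Hausdorff and Minkowski dimensions. It builds $g=d(\cdot,\mathcal{K})^{-\gamma}\chi_{\mathcal{K}_\varepsilon}$ directly on $\Hb^3$, with $g\in L^2$ coming from the Minkowski content. It then takes $f=g*\mathcal{G}_s$ and uses only $\mathcal{G}_s(r)\gtrsim r^{s-3}$ and small-ball volumes to get $f\gtrsim d(\cdot,\mathcal{K})^{s-\gamma}$. You instead import the Euclidean blow-up from \v{Z}ubrini\'c and Bennett--Rogers and transport it by the two-sided kernel bound. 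If ``singular at $a$'' is read as essential unboundedness near $a$, which is a property of the a.e.\ class, then your first route $f=(\chi F)\circ\exp_o^{-1}$ together with the norm equivalence already suffices, and no kernel comparison is needed.

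\textbf{Two small corrections.}
\begin{itemize}
\item The discarded piece $G_s*(g\chi_{\mathbb{B}(0,1)^c})$ is finite off $K_0$ because $0\le G_s*(g\chi_{\mathbb{B}(0,1)^c})\le G_s*g$. It is not because of an ``$L^2$ tail'': for $s<3/2$, $G_s\notin L^2_{loc}(\R^3)$.
\item After a dilation, $F$ is a potential of a rescaled kernel rather than of $G_s$. This is harmless for the positivity comparison, which only uses a two-sided bound $\asymp\|x\|^{s-3}$ near $0$.
\end{itemize}
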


Before commencing the proof of Theorem \ref{sobolev_singularity_theorem}, we will need the following result for the Bessel kernel on $\Hb^3$:
\begin{lemma} \label{lemma_bessel_kernel}
The Bessel kernel of order $s>0$ on $\Hb^3$ is radial and has the explicit form:
\begin{equation*}
\mathcal{G}_s(r):= C\:\left(\frac{r}{\sinh r}\right)\:r^{\frac{s-3}{2}}\:K_{\frac{3-s}{2}}(r)\:,
\end{equation*}
where $K_\nu$ is the modified Bessel function of the second kind of order $\nu$ and $C$ is a positive constant depending only on $s$. Consequently, the Bessel kernel satisfies the following asymptotics,
\begin{equation*}
\mathcal{G}_s(r) \asymp \begin{cases}
r^{s-3}\:,\:\:&r\in(0,1)\:,\\
r^{\frac{s-2}{2}}\:e^{-2r}\:,\:\:&r \ge 1\:.
\end{cases}
\end{equation*}
\end{lemma}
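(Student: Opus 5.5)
The plan is to read the Bessel kernel of order $s$ as the convolution kernel of the operator whose spherical multiplier is $(\lambda^2+1)^{-s/2}$. This matches the definition of $H^s(\Hb^3)$, in which $(\lambda^2+1)$ is exactly the spectrum of $-\Delta_{\Hb^3}$ on $\varphi_\lambda$. So $\mathcal{G}_s$ is the kernel of $(-\Delta_{\Hb^3})^{-s/2}$, and I will compute it through the heat semigroup via the subordination identity
\begin{equation*}
(\lambda^2+1)^{-s/2}=\frac{1}{\Gamma(s/2)}\int_0^\infty \tau^{\frac{s}{2}-1}e^{-\tau(\lambda^2+1)}\,d\tau .
\end{equation*}
Radiality of $\mathcal{G}_s$ is then immediate, since it is built from radial heat kernels; equivalently, it is a bi-$K$-invariant multiplier kernel.

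The first step is to use the explicit heat kernel on $\Hb^3$,
\begin{equation*}
h_\tau(r)=(4\pi \tau)^{-3/2}\,\frac{r}{\sinh r}\,e^{-\tau-\frac{r^2}{4\tau}} .
\end{equation*}
I will check that $\mathscr{F}h_\tau(\lambda)=e^{-\tau(\lambda^2+1)}$. This can be done directly with $\varphi_\lambda(r)=\sin(\lambda r)/(\lambda\sinh r)$: the factor $\sinh r$ cancels against one power of $\sinh^2 r$, and the computation reduces to the Euclidean $\R^3$ Gaussian transform. Alternatively, it follows from Lemma \ref{schwartz_correspondence_lemma}, noting that the $r/\sinh r$ factor is exactly the one appearing in (\ref{H3-N-eq1}). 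Then
\begin{equation*}
\mathcal{G}_s(r)=\frac{1}{\Gamma(s/2)}\int_0^\infty \tau^{\frac{s}{2}-1}h_\tau(r)\,d\tau .
\end{equation*}
Interchanging the $\tau$-integral with spherical inversion is justified by Tonelli, because all integrands are positive. This yields
\begin{equation*}
\mathcal{G}_s(r)=C\,\frac{r}{\sinh r}\int_0^\infty \tau^{\frac{s-3}{2}-1}e^{-\tau-\frac{r^2}{4\tau}}\,d\tau .
\end{equation*}

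The second step is to apply the classical integral representation
\begin{equation*}
\int_0^\infty \tau^{\nu-1}e^{-\tau-\frac{z^2}{4\tau}}\,d\tau=2\left(\frac z2\right)^{\nu}K_\nu(z), \qquad \nu=\frac{s-3}{2}.
\end{equation*}
Using $K_{-\nu}=K_\nu$, this gives the stated form
\begin{equation*}
\mathcal{G}_s(r)=C\,\frac{r}{\sinh r}\,r^{\frac{s-3}{2}}K_{\frac{3-s}{2}}(r).
\end{equation*}

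The third step is the asymptotics, which follow from the standard behaviour of $K_\mu$. Near $0$, $K_\mu(r)\asymp r^{-\mu}$ for $\mu>0$, and $r/\sinh r\asymp 1$. With $\mu=(3-s)/2$ this gives
\begin{equation*}
r^{\frac{s-3}{2}}\,r^{-\frac{3-s}{2}}=r^{s-3} \quad \text{on } (0,1).
\end{equation*}
At infinity, $K_\mu(r)\asymp r^{-1/2}e^{-r}$ and $r/\sinh r\asymp r e^{-r}$, so
\begin{equation*}
\mathcal{G}_s(r)\asymp r^{1+\frac{s-3}{2}-\frac12}e^{-2r}=r^{\frac{s-2}{2}}e^{-2r} \quad \text{for } r\ge 1.
\end{equation*}

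The main delicacy is the small-$r$ regime. The asymptotic $K_\mu(r)\asymp r^{-\mu}$ requires $\mu>0$, that is $s<3$. At $s=3$ one gets a logarithm, and for $s>3$ the kernel is bounded near $0$. So I will state the small-$r$ asymptotic for $s\in(0,3)$, which covers the range $s<3/2$ used in Theorem \ref{sobolev_singularity_theorem}, and note the modification in the other cases. The only other point needing care is confirming the normalization of the heat kernel against the measure $\lambda^2\,d\lambda$, which affects only the constant $C$.
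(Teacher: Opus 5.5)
Your proposal is correct and follows the paper's proof essentially step for step: heat-semigroup subordination for $(-\Delta_{\Hb^3})^{-s/2}$, the explicit heat kernel $h_\tau$, and identification of the resulting $\tau$-integral with $K_{\frac{3-s}{2}}$ (the paper reaches the same integral representation via the substitution $t=(r/2)u$). Your restriction of the small-$r$ bound to $s\in(0,3)$ is a correct sharpening of the statement as written, since $\mathcal{G}_3$ has a logarithmic singularity at $0$ and $\mathcal{G}_s$ is bounded near $0$ for $s>3$; this is harmless because only $s<3/2$ is used later.

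One justification does need repairing. Tonelli does not apply to the spherical inversion integrand, because $\varphi_\lambda(r)=\sin(\lambda r)/(\lambda\sinh r)$ changes sign; moreover, for $s\le 2$ the inversion integral of $(\lambda^2+1)^{-s/2}$ is not even absolutely convergent. You can fix this in either of two ways. First, interchange at the operator level on test functions $f$, where $h_\tau>0$ lets you apply Tonelli to $|f|$. Second, compute the forward transform of $\Gamma(s/2)^{-1}\int_0^\infty\tau^{\frac{s}{2}-1}h_\tau\,d\tau$ using the domination $|\varphi_\lambda|\le\varphi_0$.
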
  
\begin{proof}
By the heat semi-group representation of $(-\Delta_{\Hb^3})^{-s/2}$, we have,
\begin{equation*}
(-\Delta_{\Hb^3})^{-s/2} = \frac{1}{\Gamma(s/2)} \int_0^\infty t^{\frac{s}{2}-1}\:e^{t\Delta_{\Hb^3}}\:dt\:.
\end{equation*}
Now as $(-\Delta_{\Hb^3})^{-s/2}f$ for an $f \in C^\infty_c(\Hb^3)$ is given by a right convolution with the radial kernel $\mathcal{G}_s$, we get
\begin{equation} \label{bessel_eq1}
\mathcal{G}_s(r) = \frac{1}{\Gamma(s/2)} \int_0^\infty t^{\frac{s}{2}-1}\:h_t(r)\:dt\:,
\end{equation}
where $h_t$ is the heat kernel of $\Hb^3$, explicitly given by \cite{AO},
\begin{equation*}
h_t(r):=ct^{-\frac{3}{2}}\:e^{-t}\:\left(\frac{r}{\sinh r}\right)\:e^{-\frac{r^2}{4t}}\:.
\end{equation*}
Plugging the last expression in (\ref{bessel_eq1}), we have
\begin{equation*} 
\mathcal{G}_s(r) = \frac{c}{\Gamma(s/2)} \left(\frac{r}{\sinh r}\right)\int_0^\infty t^{\frac{s-3}{2}-1}\:e^{-\left(t+\frac{r^2}{4t}\right)}\:dt\:.
\end{equation*}
Then performing the change of variable: $t=(r/2)u$, we get
\begin{equation*} 
\mathcal{G}_s(r) = \frac{c}{\Gamma(s/2)} \left(\frac{r}{\sinh r}\right) \left(\frac{r}{2}\right)^{\frac{s-3}{2}}\int_0^\infty u^{\frac{s-3}{2}-1}\:e^{-\frac{r}{2}\left(u+\frac{1}{u}\right)}\:du\:.
\end{equation*}
Identifying the last integral as the modified Bessel function of the second kind of the appropriate order, we get the result.
\end{proof}

We now present the proof of Theorem \ref{sobolev_singularity_theorem}:
\begin{proof}[Proof of Theorem \ref{sobolev_singularity_theorem}]
We first focus on part (1). We fix $s \in (0,3/2)$ and choose a compact subset $\mathcal{K}' \subset \R^3$ such that 
\begin{equation*}
dim_{\mathcal{H}}(\mathcal{K}')=dim_{\mathcal{M}}(\mathcal{K}')<3-2s\:,
\end{equation*}
where $dim_{\mathcal{M}}(\mathcal{K}')$ is the Minkowski dimension of $\mathcal{K}'$. For example, if the dimension is integer, we can simply take a compact line segment or a compact 2-disk, whereas, for a non-integer dimension say $\lambda$, we write $\lambda=\zeta+\lfloor \lambda \rfloor$, $\zeta \in (0,1)$ and consider $\mathcal{K}'=\mathcal{C} \times [0,1]^{\lfloor \lambda \rfloor}$ where $\mathcal{C}$ is a generalized Cantor set with both Hausdorff and Minkowski dimension equal to $\zeta$. 

By identifying $T_o\Hb^3 \cong \R^3$, we then consider
\begin{equation*}
\mathcal{K}:=\exp_o(\mathcal{K}')\:.
\end{equation*}
Since $\exp_o$ is locally bi-Lipschitz, we get that
\begin{equation*}
dim_{\mathcal{H}}(\mathcal{K})=dim_{\mathcal{M}}(\mathcal{K})=dim_{\mathcal{H}}(\mathcal{K}')=dim_{\mathcal{M}}(\mathcal{K}')<3-2s\:.
\end{equation*}

We choose and fix $\alpha \in \left(dim_{\mathcal{M}}(\mathcal{K})\:,\:3-2s\right)$. For $\varepsilon>0$, we consider the $\varepsilon$-neighborhood of $\mathcal{K}$,
\begin{equation*}
\mathcal{K}_\varepsilon:=\left\{x \in \Hb^3 : d(x,\mathcal{K})<\varepsilon\right\}\:.
\end{equation*}
Next for $\gamma \in \left(s,(3-\alpha)/2\right)$, let us define, $g : \Hb^3 \to [0,+\infty]$ by,
\begin{equation*}
g(x):=\begin{cases}
d(x,\mathcal{K})^{-\gamma}\:,\:\:& x \in \mathcal{K}_\varepsilon\\
0\:,\:\:& x \in \Hb^3 \setminus \mathcal{K}_\varepsilon\:.
\end{cases}
\end{equation*}
Now since, $\alpha > dim_{\mathcal{M}}(\mathcal{K})$, the $\alpha$-dimensional Minkowski content of $\mathcal{K}$ defined as,
\begin{equation*}
\mathscr{M}^\alpha(\mathcal{K}):=\lim_{\varepsilon \to 0+} \frac{vol\left(\mathcal{K}_\varepsilon\right)}{\varepsilon^{3-\alpha}}\:,
\end{equation*}
is $0$ and thus
\begin{eqnarray*}
\|g\|^2_{L^2(\Hb^3)} &=& \int_{\mathcal{K}_\varepsilon} d(x,\mathcal{K})^{-2\gamma}\:dvol(x) \\
& \le & \sum_{j=0}^\infty  \int_{\mathcal{K}_{2^{-j}\varepsilon} \setminus \mathcal{K}_{2^{-j-1}\varepsilon}} d(x,\mathcal{K})^{-2\gamma}\:dvol(x) \\
& \le & \sum_{j=0}^\infty \left(2^{-j-1}\varepsilon\right)^{-2\gamma}\:\:vol\left(\mathcal{K}_{2^{-j}\varepsilon}\right) \\
& \lesssim & \sum_{j=0}^\infty \left(2^{-j-1}\varepsilon\right)^{-2\gamma}\:\left(2^{-j}\varepsilon\right)^{3-\alpha} \\
&=& \frac{2^{2\gamma} \varepsilon^{3-\alpha-2\gamma}}{1-2^{-(3-\alpha-2\gamma)}} \\
&<& \infty\:,
\end{eqnarray*}
where we have used the fact that  $2\gamma < 3-\alpha$. Hence $g \in L^2(\Hb^3)$. Consequently, the convolution with the Bessel kernel,
\begin{equation*} 
f:= g * \mathcal{G}_s \in H^s(\Hb^3)\:.
\end{equation*}
We next look at the singularities of $f$. Let $x \in \mathcal{K}_\varepsilon\setminus \mathcal{K}$ and choose $x_0 \in \mathcal{K}$ such that 
\begin{equation*}
d(x,x_0)=d(x,\mathcal{K})=:r\:.
\end{equation*}
We next consider the geodesic ball $B(x_0,r/2)$ and note that for $y \in B(x_0,r/2)$,
\begin{equation*}
d(y,\mathcal{K}) \le d(y,x_0) <\frac{r}{2} < \varepsilon\:.
\end{equation*}
Thus $B(x_0,r/2) \subset \mathcal{K}_\varepsilon$ and hence,
\begin{eqnarray*}
f(x)&=& \int_{\mathcal{K}_\varepsilon} \mathcal{G}_s\left(d(x,y)\right)\:d(y,\mathcal{K})^{-\gamma}\:dvol(y) \\
&\ge & \int_{B(x_0,r/2)} \mathcal{G}_s\left(d(x,y)\right)\:d(y,\mathcal{K})^{-\gamma}\:dvol(y)\:.
\end{eqnarray*}
At this juncture we note that 
\begin{equation*}
d(y,\mathcal{K})^{-\gamma} \ge d(y,x_0)^{-\gamma}\:,
\end{equation*}
and also that for $y \in B(x_0,r/2)$,
\begin{equation*}
d(x,y) \le d(x,x_0)+d(x_0,y) < \frac{3r}{2}\:.
\end{equation*}
Using the above inequalities along with the asymptotic lower bound for the Bessel kernel for small geodesic distances given by Lemma \ref{lemma_bessel_kernel}, we get
\begin{equation*}
f(x) \gtrsim r^{s-3} \int_{B(x_0,r/2)} d(y,x_0)^{-\gamma}\:dvol(y)\:.
\end{equation*} 
Then by the volume growth asymptotics of small geodesic balls given by (\ref{volume_asymptotics}), 
\begin{equation*}
f(x) \gtrsim r^{s-\gamma} \:.
\end{equation*}
Since $r=d(x,\mathcal{K})$, we get that for all $x \in \mathcal{K}_\varepsilon$,
\begin{equation*}
f(x) \gtrsim d(x,\mathcal{K})^{s-\gamma} \:,
\end{equation*}
that is, $f$ is singular on $\mathcal{K}$ (as $\gamma>s$). Then it is easy to see that $f$ is continuous on $\Hb^3 \setminus \overline{\mathcal{K}_\varepsilon}$ (by an easy application of the Dominated Convergence Theorem) and dominated by a continuous function on $\overline{\mathcal{K}_\varepsilon}\setminus \mathcal{K}$. Hence $\mathcal{K}$ is precisely the set of singularities of $f$. This completes the proof of part (1).

For part (2), we first consider the radially symmetric compact set in $\R^3$, considered by Bennett-Rogers in \cite[p.8]{BR} and then take its image under the Riemannian exponential map into $\Hb^3$. Then we proceed as in the proof of part (1). This time, we only additionally use the fact that the Riemannian exponential map is a radial isometry.
\end{proof}

\subsection{An inter-spatial transference principle and the proof of Theorem \ref{H3_theorem}} \label{subsec7.4}
The ideas developed in Subsections \ref{subsec7.1}-\ref{subsec7.3} culminate in the form of the following transference principle:
\begin{theorem} \label{H3_transference}
For any $s>0$ and any phase $\psi$, we have the following transference between the Hausdorff dimensions of (i) divergence sets for $\psi$-dispersive propagations on $\Hb^3$ with radial initial data and (ii) divergence sets for $\wtilde{\psi}$-dispersive propagations on $\R^3$ with radial initial data,
\begin{equation*}
\wtilde{\fd}_{\psi,\Hb^3}(s)=\wtilde{\fd}_{\wtilde{\psi},3}(s)\:,
\end{equation*}
where $$\wtilde{\psi}(\lambda):=\psi\left(\sqrt{\lambda^2+1}\right)\:,\:\:\lambda \ge 0\:.$$
\end{theorem}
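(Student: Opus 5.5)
The plan is to prove the two inequalities $\wtilde{\fd}_{\psi,\Hb^3}(s)\le\wtilde{\fd}_{\wtilde{\psi},3}(s)$ and $\wtilde{\fd}_{\wtilde{\psi},3}(s)\le\wtilde{\fd}_{\psi,\Hb^3}(s)$ by one argument run in both directions. The bridge is the pointwise identity (\ref{H3-N-eq1}): if $f\in\Ss^2(\Hb^3)_{radial}$ and $F=\mathscr{A}f$ is given by Lemma \ref{schwartz_correspondence_lemma}, then
\begin{equation*}
S^N_{\psi,t}f(\exp_o(x))=C\,\frac{\|x\|}{\sinh\|x\|}\,S^N_{\wtilde\psi,t}F(x)
\end{equation*}
for all $N$, $t$ and $x\in\R^3\setminus\{0\}$. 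The weight $r/\sinh r$ is smooth and bounded above and below on each annulus $\mathbb{A}(r_1,r_2)$. Both sides are radial, and $\exp_o$ is a radial isometry. Hence, on each annulus, convergence (or divergence) of $S^N$ as $N\to\infty$ and of the solution as $t\to0$ at $\exp_o(x)$ is equivalent to the same property at $x$. This holds for Schwartz data, and I will extend it to all radial $H^s$ data by density.

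For the direction "$\R^3$ estimate $\Rightarrow$ $\Hb^3$ estimate", I will follow the three steps in the proof of Theorem \ref{transference_theorem}, but carried out on annuli.

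\emph{Step 1.} Fix $s_0$, and set $\alpha=\wtilde{\fd}_{\wtilde\psi,3}(s_0)+\varepsilon$, where we may assume $\alpha>3-2s_0$. Fix $\nu\in\M^\alpha(\mathbb{A}(r_1,r_2))$. Frostman's lemma, the group property, Lemma \ref{Annulus-Sobolev-embedding} and Nikishin theory (all as in Step 1 of that proof, with radial data) give a weight $w>0$, $w\le1$, $\nu$-a.e., such that
\begin{equation*}
\|S^*_{\wtilde\psi}F\|_{L^{2,\infty}(w\,d\nu)}\lesssim\|F\|_{H^{s_0}(\R^3)}
\end{equation*}
for all radial $F$.

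\emph{Step 2.} Given $\mu\in\M^\alpha(A(r_1,r_2))$, use Lemma \ref{fractal_measure_correspondence_lemma} to write $\mu=(\exp_o)_*\nu$. Transport the weight by setting $\tilde w:=w\circ\exp_o^{-1}$. The pointwise identity, together with the norm equivalence $\|f\|_{H^{s_0}(\Hb^3)}\asymp\|F\|_{H^{s_0}(\R^3)}$, yields
\begin{equation*}
\|S^*_{\psi}f\|_{L^{2,\infty}(\tilde w\,d\mu)}\lesssim\|f\|_{H^{s_0}(\Hb^3)}
\end{equation*}
for radial $f\in\Ss^2(\Hb^3)$. Density extends this to all radial $H^{s_0}(\Hb^3)$.

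\emph{Step 3.} The method of maximal functions converts this into $\mu$-a.e. convergence. It uses Lemma \ref{H3-Sobolev-embedding} to control the initial datum on the fractal level and to identify the limit with the canonical representative (\ref{H3_representative}). Frostman's lemma then gives $\mathcal H^\alpha(\D_\psi(f)\cap A(r_1,r_2))=0$. Summing over countably many annuli and the single point $o$ gives $\wtilde{\fd}_{\psi,\Hb^3}(s_0)\le\alpha$.

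For the reverse direction I will run the same argument with the roles exchanged. I start from the $\Hb^3$ divergence bound, use Lemma \ref{H3-Sobolev-embedding} and the group property on $\Hb^3$ to obtain the weighted weak maximal estimate on $\Hb^3$, and pull the measure and the weight back through $\exp_o^{-1}$. Lemma \ref{schwartz_correspondence_lemma} is a bijection, so every radial $F\in\Ss(\R^3)$ arises as $\mathscr{A}f$. I conclude with Lemma \ref{Annulus-Sobolev-embedding}.

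The degenerate ranges are handled separately. For $s>1/2$ I will use Lemmata \ref{lemma_s>1/2} and \ref{H3-lemma_s>1/2}, which give $0$ on both sides. In the ill-posed regime, Remark \ref{BR_remark} and Theorem \ref{sobolev_singularity_theorem}(2) show that both quantities are at least $3-2s$ when $s<1/2$. This keeps the choice $\alpha>3-2s$ in Step 1 harmless.

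The main obstacle I expect is the density and representative issue. The identity is only known for Schwartz data, whereas divergence sets are defined through the canonical limits (\ref{canonical_representative}) and (\ref{H3_representative}) for general $H^s$ data. I must make sure that a weak-type maximal bound proved on the dense class $\Ss^2(\Hb^3)_{radial}$ yields a.e. convergence for the canonical representative of general $f$, with exceptional sets controlled up to $\mu$-null sets for every $\alpha$-dimensional $\mu$. I plan to handle this through the $N$-maximal estimate of Lemma \ref{H3-N-maximal-estimate-radial}: applied to $f-f_k$ with $f_k$ Schwartz, it shows that $\lim_N S^N_{\psi,t}f$ and the limit along the approximating sequence agree $\mu$-a.e. at each fixed $t$. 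This is exactly what the standard maximal-function argument requires.
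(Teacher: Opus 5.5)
Your proposal is correct and follows essentially the same route as the paper. Step 1 of the proof of Theorem \ref{transference_theorem} is run on annuli to get a Nikishin weight for $\wtilde{\psi}$ on $\R^3$; this is transferred to $\Hb^3$ via $\mu=(\exp_o)_*\nu$, the weight $w\circ\exp_o^{-1}$ and the identity (\ref{H3-N-eq1}); then come the maximal-function method with Lemma \ref{H3-Sobolev-embedding} and Frostman's lemma, and symmetry for the reverse inequality.

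The density obstacle you anticipate does not actually arise. For any radial $f\in H^s(\Hb^3)$, define the radial $F\in H^s(\R^3)$ by $\what{F}(\xi):=\mathscr{F}f(\|\xi\|)$; this is a bijection with $\|F\|_{H^s(\R^3)}\asymp\|f\|_{H^s(\Hb^3)}$. Since the $N$-truncated integrals converge absolutely, (\ref{H3-N-eq1}) holds for every such $f$, every $N$, every $t$ and every $x\neq 0$. The canonical limits therefore correspond exactly, giving $\D_{\psi}(f)\setminus\{o\}=\exp_o\bigl(\D_{\wtilde{\psi}}(F)\setminus\{0\}\bigr)$, and the local bi-Lipschitz property of $\exp_o$ then yields the theorem directly, without Nikishin theory or Frostman's lemma.
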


\begin{proof}
By Lemma \ref{H3-Sobolev-embedding} and part (2) of Theorem \ref{sobolev_singularity_theorem} (Lemma \ref{Annulus-Sobolev-embedding} and Remark \ref{BR_remark} resp.) the function $\wtilde{\fd}_{\psi,\Hb^3}$ ($\wtilde{\fd}_{\wtilde{\psi},3}$ resp.) is well-defined for all $s>0$, with $\wtilde{\fd}_{\psi,\Hb^3}(s) \ge 3-2s$ ($\wtilde{\fd}_{\wtilde{\psi},3}(s) \ge 3-2s$ resp.) in the range $s \in (0,1/2)$ and $\wtilde{\fd}_{\psi,\Hb^3}(s)=0$ for all $s>1/2$ by Lemma \ref{H3-lemma_s>1/2} ($\wtilde{\fd}_{\wtilde{\psi},3}(s)=0$ for all $s>1/2$ by Lemma \ref{lemma_s>1/2} resp.). 

Let $s_0>0$. Then by the definition, 
\begin{equation*}
dim_{\cH}\D_{\wtilde{\psi}}(f) \le \wtilde{\fd}_{\wtilde{\psi},3}(s_0)\:,\:\:\text{ for all } f \in H^{s_0}(\R^3)_{radial}\:.
\end{equation*}  
So in particular, for any fixed $0<r_1<r_2<\infty$,
\begin{equation*}
dim_{\cH}\left(\D_{\wtilde{\psi}}(f) \cap \mathbb{A}(r_1,r_2)\right) \le \wtilde{\fd}_{\wtilde{\psi},3}(s_0)\:,\:\:\text{ for all } f \in H^{s_0}(\R^3)_{radial}\:.
\end{equation*} 
Thus given $\varepsilon>0$, we have for $\alpha:=\wtilde{\fd}_{\wtilde{\psi},3}(s_0)+\varepsilon$,
\begin{equation*}
\cH^\alpha\left(\D_{\wtilde{\psi}}(f) \cap \mathbb{A}(r_1,r_2)\right)=0\:,\:\:\text{ for all } f \in H^{s_0}(\R^3)_{radial}\:.
\end{equation*}
Then by Frostmann's lemma,
\begin{equation*}
\nu\left(\D_{\wtilde{\psi}}(f) \cap \mathbb{A}(r_1,r_2)\right)=0\:,\:\:\forall \nu \in \M^\alpha(\mathbb{A}(r_1,r_2))\:,\:\:\forall f \in H^{s_0}(\R^3)_{radial}\:.
\end{equation*}
This means that for all $\nu \in \M^\alpha(\mathbb{A}(r_1,r_2))$ and for all $f \in H^{s_0}(\R^3)_{radial}$,
\begin{equation*} 
e^{it\wtilde{\psi}(\sqrt{-\Delta})} f(x) \to f(x)\:,\:\text{ as } t \to 0\:,\: \nu-\text{a.e. } x \in \mathbb{A}(r_1,r_2)\:\:.
\end{equation*}
Then proceeding as in the Step 1 of the proof of Theorem \ref{transference_theorem}, we obtain that there exists a $\nu$-measurable weight $w_\nu$ with $0<w_\nu\le 1,\:\nu$-a.e. on $\mathbb{A}(r_1,r_2)$ such that the following weighted, weak, fractal maximal estimate holds,
\begin{equation*}
\left\|S^*_{\wtilde{\psi}}f\right\|_{L^{2,\infty}(w_\nu\:d\nu)} \lesssim \|f\|_{H^{s_0}(\R^3)}\:,\:\:\text{ for all } f \in \Ss(\R^3)_{radial}\:.
\end{equation*} 
Next proceeding as in the proof of Lemma \ref{H3-N-maximal-estimate-radial}, we get the inter-spatial transference of fractal maximal estimates: for all $\mu := (\exp_o)_* \nu \in \M^\alpha(A(r_1,r_2))$,
\begin{equation*}
\left\|S^*_{\psi}f\right\|_{L^{2,\infty}\left(\left(w_\nu\:\circ\: \exp^{-1}_o\right)\:d\mu \right)} \lesssim \|f\|_{H^{s_0}(\Hb^3)}\:,
\end{equation*}
holds for all $f \in \Ss^2(\Hb^3)_{radial}$. Now by the standard method of maximal functions using the above maximal estimate, density of $\Ss^2(\Hb^3)_{radial}$ and the fractal Sobolev embedding for the initial data (Lemma \ref{H3-Sobolev-embedding}), we get that for all $\mu \in \M^\alpha(A(r_1,r_2))$ and all $f \in H^{s_0}(\Hb^3)_{radial}$,
\begin{equation*}
\mu \left\{x \in A(r_1,r_2) : \displaystyle\lim_{t \to 0} e^{it\psi\left(\sqrt{-\Delta_{\Hb^3}}\right)}f(x) \ne f(x)\right\}=0\:.
\end{equation*}
Thus by Frostmann's lemma,
\begin{equation*}
\mathcal{H}^\alpha  \left\{x \in A(r_1,r_2) : \displaystyle\lim_{t \to 0} e^{it\psi\left(\sqrt{-\Delta_{\Hb^3}}\right)}f(x) \ne f(x)\right\}=0\:,
\end{equation*}
for all $f \in H^{s_0}(\Hb^3)_{radial}$. By countable additivity of Hausdorff measure, we get that
\begin{equation*}
\mathcal{H}^\alpha\left(\D_{\psi}(f)\right)=\mathcal{H}^\alpha \left\{x \in \Hb^3 : \displaystyle\lim_{t \to 0} e^{it\psi\left(\sqrt{-\Delta_{\Hb^3}}\right)}f(x) \ne f(x)\right\}=0\:,
\end{equation*}
for all $f \in H^{s_0}(\Hb^3)_{radial}$. This implies that 
\begin{equation*} 
\wtilde{\fd}_{\psi,\Hb^3}(s_0) \le \alpha = \wtilde{\fd}_{\wtilde{\psi},3}(s_0)+\varepsilon\:.
\end{equation*}
Since, the above inequality is true for any $\varepsilon>0$, we get that
\begin{equation*} 
\wtilde{\fd}_{\psi,\Hb^3}(s_0) \le \wtilde{\fd}_{\wtilde{\psi},3}(s_0)\:.
\end{equation*}
By the symmetry of the argument, switching the roles of $\psi$ and $\wtilde{\psi}$, we get the opposite inequality and thus getting the result.
\end{proof}

Before proceeding to the proof of Theorem \ref{H3_theorem}, we need the following lemma:
\begin{lemma}\cite[Theorem 2]{LLN} \label{chinese_lemma}
For disperive equations in $\R^3$ (\ref{dispersive}) with phases $\wtilde{\psi}$ satisfying,
\begin{enumerate}
\item there exists $m_1>1$ such that
\begin{equation*}
|\wtilde{\psi}'(\lambda)| \asymp \lambda^{m_1-1}\:,\:\: |\wtilde{\psi}''(\lambda)| \gtrsim \lambda^{m_1-2}\:,\:\:\lambda \in (0,1)\:; 
\end{equation*}
\item there exists $m_2>1$ such that
\begin{equation*}
|\wtilde{\psi}'(\lambda)| \asymp \lambda^{m_2-1}\:,\:\: |\wtilde{\psi}''(\lambda)| \gtrsim \lambda^{m_2-2}\:,\:\:\lambda \ge 1\:; 
\end{equation*}
\item either $\wtilde{\psi}''(\lambda)>0$ or $\wtilde{\psi}''(\lambda)<0$, for all $\lambda>0$, 
\end{enumerate}
we have
\begin{equation*}
\wtilde{\fd}_{\tilde{\psi},3}(s) = \begin{cases}
3-2s\:,\:\:&\frac{1}{4} \le s <\frac{1}{2}\\
0\:,\:\:&\frac{1}{2}<s\:.
\end{cases}
\end{equation*}
\end{lemma}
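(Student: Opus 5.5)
Since this is \cite[Theorem 2]{LLN}, the proof can be rebuilt from the tools already used in Subsection \ref{subsec6.2}, following the Bennett--Rogers scheme \cite{BR}. I would treat three regimes separately.

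\emph{Regime $s>1/2$.} Here $\wtilde{\fd}_{\wtilde{\psi},3}(s)=0$ follows at once. Lemma \ref{lemma_s>1/2} applies to every phase, and the method of maximal functions with Lemma \ref{Annulus-Sobolev-embedding} and Frostmann's lemma gives $\mathcal{H}^\alpha(\D_{\wtilde{\psi}}(f))=0$ for every $\alpha>0$.

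\emph{Lower bound on $[1/4,1/2)$.} The bound $\wtilde{\fd}_{\wtilde{\psi},3}(s)\ge 3-2s$ comes from Remark \ref{BR_remark}, i.e.\ the radial Sobolev functions of \cite{BR} that are singular on sets of dimension close to $3-2s$.

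\emph{Upper bound on $[1/4,1/2)$.} It remains to prove $\wtilde{\fd}_{\wtilde{\psi},3}(s)\le 3-2s$ there. I would aim for the maximal estimate $\|S^*_{\wtilde{\psi}}f\|_{L^1(d\mu)}\lesssim\|f\|_{H^s(\R^3)}$ for radial $f$ and $\mu\in\M^\alpha(\mathbb{A}(r_1,r_2))$ with $\alpha>3-2s$. The reduction is verbatim as in the proof of Theorem \ref{radial_concave_thm}:
\begin{itemize}
\item pass to the radial pushforward $\mu_0\in\M^{\alpha_0}(r_1,r_2)$ with $\alpha_0=\alpha-2>1-2s$;
\item linearize the supremum with a measurable $t(r)$ and dualize to $\|Tg\|_{L^2(0,\infty)}\lesssim\|g\|_{L^\infty(d\mu_0)}$;
\item replace $(\lambda r)^{1/2}J_{1/2}(\lambda r)$ by $e^{\pm i\lambda r}$, where the remainder is handled as in (\ref{concave_pf_eq5});
\item split $\lambda$ with the partition $\eta_1,\eta_2$.
\end{itemize}
The low-frequency part $\lambda\lesssim 1$ is trivial, as in (\ref{concave_pf_eq7}), and hypothesis (1) plays no essential role there. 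For each dyadic block $\lambda\sim 2^k$, $k\ge1$, the $TT^*$ computation (\ref{sum_estimate}) reduces matters to bounding
\begin{equation*}
I_k(r,r')=\int\eta(\lambda)^2e^{i(2^k\lambda y+t(r)\wtilde{\psi}(2^k\lambda)-t(r')\wtilde{\psi}(2^k\lambda))}\,d\lambda,\qquad y=r-r'.
\end{equation*}
The target bound is
\begin{equation*}
|I_k|\lesssim \chi_{\{|y|\le 2^{-2ks_*/\alpha_0}\}}+2^{-2ks_*}|y|^{-\alpha_0},\qquad s_*=\alpha_0/2,
\end{equation*}
after which Lemma \ref{CS_lemma} and summation over $k$ (geometric, since $s>1/2-s_*$) finish the argument.

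\emph{The oscillatory integral (main step).} Write $\tau=t(r)-t(r')\in[-2,2]$ and $\phi(\lambda)=2^k\lambda y+\tau\wtilde{\psi}(2^k\lambda)$. Hypothesis (2) gives $|\phi'(\lambda)-2^ky|\asymp|\tau|2^{km_2}$ and $|\phi''|\gtrsim|\tau|2^{km_2}$ on $\lambda\sim1$. Hypothesis (3) makes $\phi'$ monotone, which is what allows the first-derivative van der Corput lemma. I would split into two cases, mirroring the sets $\Omega_1,\Omega_2$.
\begin{itemize}
\item If $|\tau|2^{km_2}\le c\,2^k|y|$, then $|\phi'|\gtrsim 2^k|y|$ and $|I_k|\lesssim(2^k|y|)^{-1}$.
\item Otherwise $|\phi''|\gtrsim 2^k|y|$, and van der Corput gives $|I_k|\lesssim(2^k|y|)^{-1/2}$.
\end{itemize}
In either case $|I_k|\lesssim(2^k|y|)^{-\min(1/2,\alpha_0)}$ for $2^k|y|\ge1$. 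One must then check this is at most $2^{-k\alpha_0}|y|^{-\alpha_0}$ when $\alpha_0\le1/2$, which is exactly the range $s\ge1/4$. When $\alpha_0>1/2$ one needs $2^{-k/2}|y|^{-1/2}\lesssim 2^{-2ks_*}|y|^{-\alpha_0}$, so $s_*$ must be capped at $1/4$. This cap is harmless because for $s\ge1/4$ we only need $s>1/2-s_*$ with $s_*\le1/4$, consistent with the threshold $1/4$ in the statement.

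I expect the main obstacle to be this case analysis: the power-type asymptotics of $\wtilde{\psi}$ must give van der Corput bounds uniform in $k$ and in the measurable $t(\cdot)$. Hypothesis (3) is essential here, since without it $\phi'$ need not be monotone and the first-derivative van der Corput estimate fails.
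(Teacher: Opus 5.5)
The paper gives no proof of this lemma; it is quoted from \cite[Theorem 2]{LLN}, so your reconstruction has to stand on its own. Three parts of it are sound: the regime $s>1/2$, the lower bound, and the upper bound on the open range $1/4<s<1/2$. One minor point: against $\mu_0\in\M^{\alpha_0}$ the kernel $|y|^{-\alpha_0}$ is the borderline case of Lemma \ref{CS_lemma}. Either take $\rho$ slightly below $\alpha_0$ but still above $1-2s$, or accept a harmless factor $k$ from the truncation $|y|>2^{-k}$.

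\textbf{The gap is the endpoint $s=1/4$, which the statement includes.} There you must show $\wtilde{\fd}_{\wtilde{\psi},3}(1/4)\le 5/2$, i.e.\ handle every $\alpha_0=\alpha-2>1/2$. Your bound $|I_k|\lesssim\min\{1,(2^k|y|)^{-1/2}\}$ yields at best $\|\eta(2^{-k}\cdot)Ug\|^2_{L^2}\lesssim 2^{k/2}\cdot 2^{-k/2}\|g\|^2_{L^2(d\mu_0)}$. This is uniform in $k$ with no decay, so the sum over $k$ diverges. Your claim that capping $s_*$ at $1/4$ is harmless for $s\ge 1/4$ fails exactly here, because you need $s>1/2-s_*$ strictly. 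Monotonicity in $s$ cannot rescue the endpoint either, since bounds for $s>1/4$ give no upper bound at $s=1/4$.

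The missing idea is that the stationary regime occurs at only boundedly many dyadic scales, so the kernels should be summed over $k$ pointwise rather than block by block. Split into three regimes instead of two:
\begin{itemize}
\item If $|\tau|2^{km_2}\le c\,2^k|y|$, then $|I_k|\lesssim(2^k|y|)^{-1}$.
\item If $|\tau|2^{km_2}\ge C\,2^k|y|$, then $|\phi'|\gtrsim|\tau|2^{km_2}$. The first-derivative test (via hypothesis (3)) gives $|I_k|\lesssim(|\tau|2^{km_2})^{-1}\lesssim(2^k|y|)^{-1}$. Your proposal sends this regime to the second-derivative test and loses decay there.
\item Only when $c|y|<|\tau|2^{k(m_2-1)}<C|y|$ do you need $(2^k|y|)^{-1/2}$. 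Since $m_2\neq 1$, this happens for only $O(1)$ values of $k$.
\end{itemize}
By Tonelli,
\[
\sum_k\|\eta(2^{-k}\cdot)Ug\|^2_{L^2}\lesssim\iint|g(r)|\,|g(r')|\sum_k 2^{k/2}|I_k(r,r')|\,d\mu_0(r)\,d\mu_0(r').
\]
The refined bounds give
\[
\sum_k 2^{k/2}|I_k|\lesssim\sum_{2^k\le|y|^{-1}}2^{k/2}+|y|^{-1}\sum_{2^k>|y|^{-1}}2^{-k/2}+O(|y|^{-1/2})\lesssim|y|^{-1/2}.
\]
The second part of Lemma \ref{CS_lemma}, with $\rho=1/2<\alpha_0$, then closes the endpoint. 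This Kenig--Ponce--Vega/Sj\"olin-type global kernel bound is where $m_2>1$ genuinely enters. In your version the value of $m_2$ plays no role at all, which already signals that the endpoint is out of reach.
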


\begin{proof}[Proof of Theorem \ref{H3_theorem}]
By the inter-spatial transference principle Theorem \ref{H3_transference}, the results for $\wtilde{\fd}_{\psi,\Hb^3}$  follow by the corresponding results for  $\wtilde{\fd}_{\tilde{\psi},3}$ where $\tilde{\psi}(\lambda)=\psi\left(\sqrt{\lambda^2+1}\right)$. 

For the PDEs appearing in part (1) of Theorem \ref{H3_theorem}, their corresponding $\tilde{\psi}$ satisfy the conditions of Lemma \ref{chinese_lemma}. In particular, their second derivatives are strictly positive and moreover satisfy the hypothesis (1) and (2) of Lemma \ref{chinese_lemma} as presented in Table \ref{table:2}.

\begingroup
\setlength{\tabcolsep}{10pt} 
\renewcommand{\arraystretch}{1.5} 
\begin{table}[H]
\centering
\begin{tabular}{ ||p{5cm}|| p{4cm} || p{1cm} || p{1cm}|| }
 \hline
 Dispersive equation & Corresponding  $\wtilde{\psi}(\lambda)$  & $m_1$ & $m_2$ \\
 \hline 
  Schr\"odinger &  $\lambda^2+1$   & 2 & 2    \\
 \hline
Fractional Schr\"odinger with convex phase & $\left(\lambda^2+1\right)^{\frac{a}{2}}\:,\:\:a>1$  & 2 & $a$\\
  \hline
Beam & $\left(1+\left(\lambda^2+1\right)^2\right)^{\frac{1}{2}}$ & 2 & 2   \\
\hline
Linearized Boussinesq & $\left(\lambda^2+1\right)^{\frac{1}{2}} \left(\lambda^2+2\right)^{\frac{1}{2}}$ & 2& 2 \\
\hline
\end{tabular}

\medskip

\caption{Dispersive PDEs in part (1)}
\label{table:2}
\end{table}
\endgroup

Then by Lemma \ref{chinese_lemma}, the results follow for the range $s \in [1/4,1/2)\cup (1/2,\infty)$. The results for $s\in (0,1/4)$, on the other hand, follow from \cite[Theorem 1.1]{Dewan2}. This completes the proof of Part (1).

Part (2) ((3) resp.) of Theorem \ref{H3_theorem} follows from the inter-spatial transference principle Theorem \ref{H3_transference} and Corollary \ref{radial_corollary2} (Corollary \ref{radial_corollary3} resp.), as for any $C>0$ and $a \in (0,1]$,
\begin{equation*}
\left(\lambda^2+C\right)^{\frac{a}{2}}=\lambda^a\:+\:\mathcal{O}(1)\:,\:\:\lambda \gg 1\:.
\end{equation*}
This completes the proof of Theorem \ref{H3_theorem}.
\end{proof}

\section{Concluding remarks}
\label{sec8}
In this section, we make some remarks and pose some new problems. 
\begin{enumerate}
\item For the fractional Schr\"odinger equations on $\R^n$ with convex or concave phases i.e. $a>1$ or $a \in (0,1)$ and radial initial data (see (\ref{BR_estimates}) and Theorem \ref{radial_concave_thm}), the quantity $\tilde{\fd}_{\psi,n}(1/2)$ continues to remain unknown and determining it seems to be an interesting problem.

\medskip

\item It is a natural and interesting problem to generalize Theorem \ref{H3_theorem} to real hyperbolic spaces of all dimensions and moreover, to the class of homogeneous manifolds known as rank one Riemannian symmetric spaces of non-compact type.
\end{enumerate}

\section*{Declarations}
\subsection*{Acknowledgements}  The author is supported by the Institute Post Doctoral Fellowship of the Indian Institute of Technology, Bombay.

\subsection*{Data availability:} No data was used for the research described in the article.
\subsection*{Competing interests:} The author declares no competing interests.
\subsection*{AI declaration:} No artificial intelligence tools were used. 

\bibliographystyle{amsplain}

\end{document}